\numberwithin{equation}{section}       
\numberwithin{figure}{section}       
\theoremstyle{plain}
\newtheorem{Thm}{Theorem}[section]
\newtheorem{Prop}[Thm]{Proposition}
\newtheorem{Lemma}[Thm]{Lemma}
\newtheorem{Prop-def}[Thm]{Proposition-Definition}
\newtheorem*{ThmD}{Theorem D} 
\newtheorem*{ThmDp}{Theorem D'}
\newtheorem*{ConjA}{Conjecture A} 
\newtheorem*{ConjAp}{Conjecture A'} 
\newtheorem*{ConjB}{Conjecture B} 
\newtheorem*{ConjBp}{Conjecture B'} 
\newtheorem*{ConjC}{Conjecture C} 
\newtheorem*{ConjCp}{Conjecture C'} 
\newtheorem*{ConjCb}{Conjecture C''} 
\newtheorem*{conjectureC''}{Conjecture C''}
\newtheorem*{ConjE}{Conjecture E} 
\newtheorem*{ConjEp}{Conjecture E'}
\theoremstyle{definition}
\newtheorem{Def}[Thm]{Definition}
\newtheorem{Remark}[Thm]{Remark}
\newcommand{\C}{{\mathbf{C}}}
\newcommand{\D}{{\mathbf{D}}}
\renewcommand{\P}{{\mathbf{P}}}
\newcommand{\Q}{{\mathbf{Q}}}
\newcommand{\R}{{\mathbf{R}}}
\newcommand{\Z}{{\mathbf{Z}}}
\newcommand{\bbar}{{\bar{\beta}}}
\newcommand{\cH}{{\mathcal{H}}}
\newcommand{\cJ}{{\mathcal{J}}}
\newcommand{\cO}{{\mathcal{O}}}
\newcommand{\cV}{{\mathcal{V}}}
\newcommand{\fa}{{\mathfrak{a}}}
\newcommand{\fb}{{\mathfrak{b}}}
\newcommand{\fm}{{\mathfrak{m}}}
\newcommand{\fq}{{\mathfrak{q}}}
\newcommand{\tU}{{\tilde{U}}}
\renewcommand{\a}{\alpha}
\renewcommand{\b}{\beta}
\newcommand{\g}{\gamma}
\renewcommand{\d}{\delta}
\newcommand{\la}{\lambda}
\newcommand{\e}{\varepsilon}
\newcommand{\f}{\varphi}
\newcommand{\p}{\psi}
\newcommand{\Lloc}{L^1_{\mathrm{loc}}}
\newcommand{\Ltwo}{L^2}
\newcommand{\eg}{e.g.\ }
\newcommand{\loccit}{\textit{loc.\ cit.\ }}
\newcommand{\Arn}{\operatorname{Arn}}
\newcommand{\lct}{\operatorname{lct}}
\newcommand{\ord}{\operatorname{ord}}
\newcommand{\Spec}{\operatorname{Spec}}
\newcommand{\Val}{\operatorname {Val}}
\newcommand{\Vol}{\operatorname{Vol}}
\newcommand{\llbracket}{[\negthinspace[}
\newcommand{\rrbracket}{]\negthinspace]}
\newcommand{\Dan}{D^{\mathrm{an}}}
\newcommand{\Xan}{X^{\mathrm{an}}}
\newcommand{\Zan}{Z^{\mathrm{an}}}
\newcommand{\pian}{\pi^{\mathrm{an}}}
\title[Openness Conjecture]{An algebraic approach to the Openness Conjecture of Demailly and Koll\'ar}
\date{\today}
\author{Mattias Jonsson and Mircea Musta\c{t}\u{a}}
\address{Dept of Mathematics, University of Michigan, Ann Arbor \\ MI 48109-1043\\ USA}
\email{mattiasj@umich.edu, mmustata@umich.edu}
\subjclass{}
\thanks{2010\,\emph{Mathematics Subject Classification}.
 Primary 32U05; Secondary 32U25, 14F18, 12J20, 14B05.
\newline The first author was partially supported by
NSF grant DMS-1001740 and the second author 
was partially supported by
 NSF grant DMS-1068190.}
\keywords{Plurisubharmonic function, graded sequence, log canonical threshold, valuation.}
\begin{document}
\begin{abstract}
  We reduce the Openness Conjecture of Demailly and Koll\'ar 
  on the singularities of plurisubharmonic functions to a purely 
  algebraic statement.
\end{abstract}


\maketitle
\setcounter{tocdepth}{1}
\tableofcontents

%
%
%
%
%
%
\section{Introduction}
In this paper we study singularities of plurisubharmonic (psh) functions.
These are important in complex analytic geometry,
see, for instance,~\cite{Lelong,Skoda,Siu74,Dem87,Kis87,Kis94,DK}.
Specifically, we study the \emph{openness conjecture}
of Demailly and Koll\'ar, and reduce this conjecture to a purely
algebraic statement.

\smallskip
Let $\f$ be the germ of a psh function at a point $x$ on a complex
manifold. It is easy to see that the set of real numbers $c>0$ such that 
$\exp(-c\f)$ is locally integrable at $x$ is an interval.
It is nonempty by a result of Skoda~\cite{Skoda}. 
The openness conjecture,
see~\cite[Remark~5.3]{DK}, asserts that this interval is open.
If we define the 
\emph{complex singularity exponent} of $\f$ at $x$ by 
\begin{equation*}
  c_x(\f)=\sup\{c>0\mid\exp(-2c\f)\ \text{is locally integrable at $x$}\},
\end{equation*}
then the conjecture can be stated as follows:
\begin{ConjA}
  If $c_x(\f)<\infty$, then the function $\exp(-2c_x(\f)\f)$ 
  is not locally integrable at $x$.
\end{ConjA}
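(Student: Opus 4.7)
The plan is to transfer Conjecture~A from analysis into algebra by attaching to a psh germ a graded sequence of ideals whose log canonical threshold records the complex singularity exponent, and then to reduce the openness of integrability at $c_x(\f)$ to a valuative statement about this sequence.

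First I would associate to $\f$ the graded sequence $\fa_\bullet=(\fa_m)_{m\ge 1}$ in $\cO_{\C^n,x}$, where $\fa_m$ consists of the holomorphic germs $f$ with $|f|\le C_f\,e^{m\f}$ locally near $x$; the submean inequality for psh functions immediately yields $\fa_m\cdot\fa_\ell\subseteq\fa_{m+\ell}$, so $\fa_\bullet$ is graded. Using Ohsawa--Takegoshi extension to produce enough sections from the integrability of $e^{-2c\f}$, together with an elementary $L^2$ estimate in the opposite direction, one then proves
\[
c_x(\f)=\lct(\fa_\bullet):=\lim_{m\to\infty} m\cdot\lct(\fa_m).
\]
In this way the complex singularity exponent is turned into the log canonical threshold of a graded sequence of ideals.

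Next I would express the right hand side valuatively. For any single ideal $\fa$ the classical equality $\lct(\fa)=\inf_v A(v)/v(\fa)$ over divisorial valuations $v$ centered at $x$ passes to the limit and gives
\[
\lct(\fa_\bullet)=\inf_v \frac{A(v)}{v(\fa_\bullet)},\qquad v(\fa_\bullet):=\lim_m v(\fa_m)/m.
\]
The reduction to algebra is then the assertion that this infimum is \emph{attained} by some well-behaved valuation, for instance a quasi-monomial valuation on a suitable log resolution. Granting such a computing valuation $v^\ast$, I would choose for each $m$ a section $f_m\in\fa_m$ nearly realizing $v^\ast(\fa_\bullet)$ and compare $\f$ to $\frac{1}{m}\log|f_m|$ valuatively at $v^\ast$. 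A Howald-type monomial estimate at the center of $v^\ast$ then bounds
\[
\int_U e^{-2c_x(\f)\f}\;\gtrsim\;\int_U |f_m|^{-2c_x(\f)/m},
\]
and the right hand side diverges precisely because $c_x(\f)/m$ meets the log canonical threshold of $(f_m)$ along $v^\ast$; non-integrability at the critical exponent follows.

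The main obstacle will be the middle step: showing that the infimum in the valuative formula for $\lct(\fa_\bullet)$ is actually achieved. For a single ideal this is immediate from a log resolution, but for a graded sequence the functional $v\mapsto A(v)/v(\fa_\bullet)$ is not manifestly lower semicontinuous on any compact space, divisorial valuations no longer suffice in general, and the limit defining $v(\fa_\bullet)$ can behave badly along sequences of valuations. This is precisely the purely algebraic content to which Conjecture~A is being reduced; attacking it will require a careful study of valuation spaces --- Berkovich-style compactifications, retractions onto dual complexes of log resolutions, and monotonicity properties of $m\mapsto v(\fa_m)/m$.
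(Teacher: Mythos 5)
Your overall picture --- trade $\f$ for a sequence of ideals whose asymptotic log canonical threshold recovers $c_x(\f)$, then invoke a purely algebraic statement about a valuation computing that threshold --- is indeed the spirit of the paper. But your specific implementation has a gap at the very first step that, as I explain below, would actually trivialize the problem if it could be filled directly, which is a sign that it cannot.

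You attach to $\f$ the \emph{graded} sequence $\fa_m=\{f\in\cO_x:|f|\le C_f e^{m\f}\}$, i.e.\ an $L^\infty$ condition, and you claim $c_x(\f)=\lct(\fa_\bullet)$ as a consequence of Ohsawa--Takegoshi. The inequality $\lct(\fa_\bullet)\le c_x(\f)$ is immediate, since $\frac1m\log|\fa_m|\le\f+O(1)$. The reverse inequality is the entire difficulty, and Ohsawa--Takegoshi does \emph{not} give it: OT and the Demailly approximation produce germs $f$ with $\int|f|^2 e^{-2p\f}<\infty$, an $L^2$ bound, not germs with $|f|\le C e^{p\f}$. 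The $L^2$-bounded germs are precisely the multiplier ideals $\fb_p=\cJ(p\f)$, and the natural sequence they form is \emph{subadditive}, not graded. For this subadditive sequence the paper proves $c_x^\fq(\f)=\lct^\fq_x(\fb_\bullet)$ (Proposition 3.8), controlled growth, and $v(\f)=v(\fb_\bullet)$, all via the Demailly estimates. For your $L^\infty$ sequence there is no such argument, and in fact the equality $c_x(\f)=\lct(\fa_\bullet)$ \emph{cannot} be elementary: if it held, then for each $m$ one would have $\lct(\fa_m)\le\lct(\fa_\bullet)/m=c_x(\f)/m$, so (by openness for a single ideal, which \emph{is} classical via log resolution) $|\fa_m|^{-2c_x(\f)/m}$ is non-integrable at $x$, and then $e^{-2c_x(\f)\f}\gtrsim|\fa_m|^{-2c_x(\f)/m}$ forces $e^{-2c_x(\f)\f}$ to be non-integrable --- Conjecture~A would follow with no algebraic input at all. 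The fact that the Openness Conjecture was open for decades tells you the claimed equality for the $L^\infty$ sequence must be essentially as hard as Conjecture~A itself, so it cannot serve as the first step of a reduction.

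There is a second, structural, omission. Even once one has the correct ($L^2$) sequence, one cannot simply apply Conjecture~C (or E') at the point $x$. When $\f$ has a non-isolated singularity, $\fb_1=\cJ(\f)$ is not $\fm_x$-primary, so the hypotheses of the algebraic conjectures fail. The paper first replaces $x$ by a generic point of the log canonical locus $V=\{y:c_y^\fq(\f)\le\la\}$, forces $\f\ge p\log|I_V|+O(1)$ by an auxiliary modification of $\f$ (Lemma~4.2), and then localizes to $\cO_{x,V}$, where $\fm_{x,V}^{pj}\subseteq\fb_j\cdot\cO_{x,V}$ holds and Conjecture~E' applies. The computing quasimonomial valuation then lives on a blowup of $\Spec\cO_{x,V}$, and the paper analytifies it to a Kiselman number and derives the \emph{volume} estimate of Conjecture~B on a union of annuli $\Omega_k$ near the center of the valuation; that is how non-integrability is ultimately certified. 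This localization plus the Kiselman-number volume estimate are the technical heart of the proof, and they are absent from your outline.

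In short: you correctly identified that the residual algebraic content is the attainment of the infimum by a quasimonomial valuation (that is Conjecture~C of the paper), but the reduction to algebra must pass through the $L^2$ multiplier-ideal subadditive sequence (so that Demailly approximation actually applies) and through a localization at the log canonical locus; the $L^\infty$ graded sequence and Ohsawa--Takegoshi alone do not carry the argument through the first step.
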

In fact, Demailly and Koll\'ar made the following slightly more 
precise conjecture, which easily implies Conjecture~A,
see~\cite[Remark~4.4]{DK}.
\begin{ConjB} 
  If $c_x(\f)<\infty$, then for every open neighborhood $U$ of $x$ on
  which $\f$ is defined, we have the estimate
  \begin{equation*}
    \Vol\{y\in U\mid c_x(\f)\f(y)<\log r\}\gtrsim r^2
  \end{equation*}
  as $r\to 0$.
\end{ConjB}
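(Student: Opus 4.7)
The plan is to translate the analytic problem into a purely algebraic one via Demailly's multiplier ideal approximation. First, associate to $\f$ the graded sequence $\fa_\bullet=\{\fa_m\}_{m\geq1}$ with $\fa_m:=\cJ(m\f)$, and consider the approximating psh functions $\f_m:=\frac{1}{2m}\log\sum_j|f_{m,j}|^2$, where $f_{m,j}$ generate $\fa_m$. Demailly's approximation (via Ohsawa--Takegoshi) gives $\f-C/m\leq\f_m\leq\f+C(\log m)/m$ locally, so the sublevel sets of $\f$ and of $\f_m$ agree up to controlled error; the volume estimate of Conjecture~B for $\f$ thus reduces to the same estimate, with the same exponent, for $\f_m$, which is a purely algebraic condition on the vanishing locus of $\fa_m$.

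Next, I would analyze $\fa_\bullet$ using valuations. For any quasi-monomial valuation $\nu$ centered at $x$ with log discrepancy $A(\nu)$, the asymptotic invariant $\nu(\fa_\bullet):=\lim_m\nu(\fa_m)/m$ is well defined, and standard results (Howald's formula applied to each $\fa_m$, together with the subadditivity theorem of Demailly--Ein--Lazarsfeld) yield
\[
c_x(\f)=\inf_\nu\frac{A(\nu)}{\nu(\fa_\bullet)}.
\]
To prove Conjecture~B it then suffices to show that this infimum is \emph{achieved}: some valuation $\nu^\ast$ satisfies $A(\nu^\ast)=c_x(\f)\cdot\nu^\ast(\fa_\bullet)$. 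Realizing such a $\nu^\ast$ as a toroidal valuation on a suitable log resolution $\pi\colon Y\to X$ and writing out the corresponding monomial inequalities in local coordinates along the exceptional locus then yields, by a direct computation of the pushforward of Lebesgue measure using the relative Jacobian of $\pi$, a lower bound $\gtrsim r^2$ on the volume; the exponent $2$ arises from the equality $A(\nu^\ast)/\nu^\ast(\fa_\bullet)=c_x(\f)$ together with the passage from real to complex dimension.

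The main obstacle --- and presumably the purely algebraic statement to which the paper reduces the conjecture --- is the existence of the computing valuation $\nu^\ast$. For a \emph{single} ideal this is immediate from resolution of singularities, but for the graded sequence $\fa_\bullet$ one must argue compactness or lower semicontinuity on an infinite-dimensional space of valuations, controlling both $\nu\mapsto\nu(\fa_\bullet)$ and $\nu\mapsto A(\nu)$ simultaneously. The natural route is to exhibit a single birational model $Y\to X$ such that every candidate minimizer is equivalent to a quasi-monomial valuation supported on $Y$, thereby replacing the minimization by one over a finite-dimensional rational polyhedral cone, where existence of a minimum is automatic. Making this reduction --- and, crucially, verifying that $\nu(\fa_\bullet)$ behaves well along the boundary of such a cone --- is where I expect the real difficulty to lie.
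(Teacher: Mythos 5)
Your plan captures the high-level architecture --- approximate $\f$ by the multiplier ideals $\cJ(m\f)$, pass to a valuation-theoretic description of $c_x(\f)$, find a quasimonomial computing valuation, and then integrate over polydiscs on a resolution to get the $\gtrsim r^2$ estimate --- but it misses the central new idea of the paper and would fail exactly where $\f$ has a non-isolated singularity at $x$. Concretely: you work only with valuations \emph{centered at $x$}, so you implicitly need the hypothesis $\fm_x^{pj}\subseteq\cJ(j\f)$ (equivalently $\f\ge p\log|\fm_x|+O(1)$) to restrict attention to valuations with $v(\fm_x)>0$, which is the compactness input that makes a computing valuation exist. That fails whenever the polar locus of $\f$ is positive-dimensional. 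The paper's key move is to instead localize along the germ $V=\{y : c_y(\f)\le c_x(\f)\}$ (the log canonical locus), pass to $\cO_{x,V}$, and apply the algebraic conjecture there, after first modifying $\f$ to $\tilde\f=\max\{\f,p\log|I_V|\}$ to force $\fm_{x,V}^{pj}\subseteq\cJ(j\f)\cdot\cO_{x,V}$ (Lemma~\ref{L201}) and verifying $\lct^{\fq}_{x,V}(\fb_\bullet)=\la$ (Lemma~\ref{L101}). Without this localization step, your infimum $\inf_\nu A(\nu)/\nu(\fa_\bullet)$ over valuations centered at $x$ need not be attained, and in the computation of the volume you would be integrating in the wrong directions.

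Two further points: the sequence $\fb_j=\cJ(j\f)$ is \emph{subadditive} ($\fb_{i+j}\subseteq\fb_i\fb_j$, by Demailly--Ein--Lazarsfeld), not graded as you write; the paper makes essential use of the ``controlled growth'' property of this subadditive sequence, which encodes the Demailly approximation estimates at the valuation level. And your proposal to ``exhibit a single birational model $Y\to X$ such that every candidate minimizer is quasi-monomial'' is precisely the open algebraic conjecture (Conjecture~C), not a step one can carry out with known techniques --- this is a conditional reduction, not a proof; the paper is explicit that Conjecture~B is derived \emph{assuming} Conjecture~C (Theorems~D, D'). So even granting the localization, the existence of $\nu^\ast$ should be taken as hypothesis, and the remaining work is then to translate it back to the analytic side via Kiselman numbers (Proposition~\ref{P102}, \S\ref{S114}) and carry out the polydisc volume estimate, which is the part of your sketch that is essentially right in outline.
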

Here and throughout the paper 
we write $f(r)\gtrsim r^2$ as $r\to 0$ if there exists $c>0$
such that $f(r)\ge cr^2$ for all sufficiently small $r$.

Demailly and Koll\'ar also proved that Conjecture~A implies a stronger 
openness statement, namely, that the local integrability of
$\exp(-2\f)$ is an open condition with respect to the $\Lloc$ topology,
see~\cite[Conjecture~5.4]{DK}.

Conjectures~A and~B are easily verified in dimension one. A proof
in the two-dimensional case was given in~\cite{valmul}.
In higher dimensions, they are open. 
In this paper we reduce Conjecture~B to a purely algebraic conjecture:
\begin{ConjC}
  Let $k$ be an algebraically closed field of characteristic zero.
  Let $\fa_\bullet$ be a graded sequence of ideals in
  the polynomial ring $R=k[x_1,\dots,x_m]$ such that $\fa_1$
  is $\fm$-primary, where $\fm$ is a maximal ideal in $R$.
  Then there exists
  a quasimonomial valuation $v$ on $R$ which computes $\lct(\fa_\bullet)$.
\end{ConjC}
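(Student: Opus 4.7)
The plan is to construct the desired valuation as a limit of divisorial valuations computing $\lct(\fa_p)$ as $p\to\infty$, and then to verify quasimonomiality via retraction to the skeleton of a log resolution.

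\emph{Step 1: Divisorial approximants and compactness.} For each $p\geq 1$, the containment $\fa_1^p\subseteq\fa_p$ ensures $\fa_p$ is $\fm$-primary, so resolution of singularities produces a divisorial valuation $v_p$ with $\lct(\fa_p)=A(v_p)/v_p(\fa_p)$, where $A$ denotes the log discrepancy. Rescale so that $v_p(\fa_p)=p$; then $A(v_p)=p\cdot\lct(\fa_p)\leq\lct(\fa_\bullet)<\infty$, giving a uniform bound. Fixing $N$ with $\fm^N\subseteq\fa_1$ yields $\fm^{Np}\subseteq\fa_p$, and therefore $v_p(\fm)\geq 1/N$. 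The normalized $v_p$ thus lie in a compact subset of the valuation space $\Val(R)$ of valuations centered at $\fm$.

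\emph{Step 2: Passage to the limit.} Extract a convergent subnet with limit $v$. By lower semicontinuity of $A$, $A(v)\leq\liminf A(v_p)\leq\lct(\fa_\bullet)$. For each fixed $q$, the graded inclusion $\fa_q^k\subseteq\fa_{kq}$ gives $k\cdot v_{kq}(\fa_q)\geq v_{kq}(\fa_{kq})=kq$, so $v_{kq}(\fa_q)\geq q$; passing to the limit along $p=kq\to\infty$ yields $v(\fa_q)\geq q$, whence $v(\fa_\bullet)\geq 1$. The automatic inequality $\lct(\fa_\bullet)\leq A(v)/v(\fa_\bullet)\leq A(v)\leq\lct(\fa_\bullet)$ then forces equality throughout, so $v$ computes $\lct(\fa_\bullet)$.

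\emph{Step 3: Quasimonomiality, the main obstacle.} It remains to replace $v$ by a quasimonomial valuation computing the same lct. On a log resolution $\pi\colon Y\to\Spec R$ whose exceptional divisor $D=\sum E_i$ has SNC support containing the image of the center of $v$, the quasimonomial retraction $w=r_{(Y,D)}(v)$ is defined by $w(E_i):=v(E_i)$ on the components through the center, extended monomially in local coordinates. One has $A(w)\leq A(v)$; but an element $f$ is a sum whose $w$-value equals the minimum of the values of its monomial terms, so $w(f)\leq v(f)$ in general, and hence $w(\fa_p)\leq v(\fa_p)$ -- the inequality points the wrong way for the lct. The main obstacle is to show that along a sufficiently fine tower of log resolutions the asymptotic defect $v(\fa_p)-w(\fa_p)$ is $o(p)$, so that $w(\fa_\bullet)=v(\fa_\bullet)$ and $w$ still computes $\lct(\fa_\bullet)$. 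Such a uniform-in-$p$ estimate is invisible at the level of any single ideal $\fa_p$; it must exploit the graded-sequence condition $\fa_p\fa_q\subseteq\fa_{p+q}$ in an essentially asymptotic way, and this is where the substantive content of Conjecture~C lies.
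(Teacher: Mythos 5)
This statement is not a theorem of the paper: Conjecture~C is an \emph{open} conjecture, and the paper's whole contribution (Theorem~D) is to show that Conjecture~C \emph{implies} the openness conjecture, not to prove Conjecture~C itself. The paper explicitly states that Conjecture~C is known in dimension two (citing~\cite[\S9]{graded}) and open in higher dimensions. So there is no proof of Conjecture~C in the paper to compare against.

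Given that, your proposal does the right thing: it honestly stops short of claiming a proof. Your Steps~1 and~2 are essentially a correct reproof of \cite[Theorem~7.3]{graded}, which the paper cites in~\S\ref{S113} as the statement that \emph{some} valuation (not necessarily quasimonomial) computes $\lct(\fa_\bullet)$: divisorial computing valuations for each $\fa_p$ live in a compact set of $\Val(R)$ after normalization (you get $v_p(\fm)\ge 1/N$ as you say, and the upper bound $v_p(\fm)\le A(v_p)\le\lct(\fa_\bullet)$ is also needed, though you left it implicit), one extracts a limit, and lower semicontinuity of $A$ together with continuity of $v\mapsto v(\fa_q)$ closes the loop. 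One small imprecision: in passing to the limit for $v(\fa_q)\ge q$ you only use the indices $p=kq$, but a convergent subnet need not contain cofinally many multiples of $q$; the fix is to write $p=kq+r$ with $0\le r<q$, use $\fa_q^k\cdot\fa_r\subseteq\fa_p$ and the uniform bound on $v_p(\fa_r)$ coming from $\fm^{Nr}\subseteq\fa_r$ and $v_p(\fm)\le\lct(\fa_\bullet)$, and then let $p\to\infty$.

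Your Step~3 is where Conjecture~C actually lives, and you diagnose the obstruction correctly: the retraction $w=r_{(Y,D)}(v)$ to the skeleton of a log resolution satisfies $A(w)\le A(v)$ but $w(\fa_p)\le v(\fa_p)$, which points the wrong way, and one needs to show the defect is $o(p)$ uniformly across a tower of resolutions. No such estimate is known in general, and you are right that this is where the substance of the conjecture sits. In short: your proposal contains no errors, it recovers the known existence result, and it correctly flags the quasimonomiality step as unresolved --- which is exactly the status of Conjecture~C in the paper.
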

Let us briefly explain the meaning of the terms here; see~\S\ref{S116}
and~\cite{graded} for more details.
The \emph{log canonical threshold} $\lct(\fa)$ of an ideal $\fa\subseteq R$ 
is the algebro-geometric analogue of the complex singularity exponent.
A \emph{graded sequence} $\fa_\bullet=(\fa_j)_{j=1}^\infty$ of ideals in $R$ is a sequence
such that $\fa_i\cdot\fa_j\subseteq\fa_{i+j}$
for all $i,j\ge 1$ (we always assume that some $\fa_i$ is nonzero).
We then define
\begin{equation*}
  \lct(\fa_\bullet)
  :=\sup_j j\lct(\fa_j)=\lim_{j\to\infty} j\lct(\fa_j),
\end{equation*}
where the limit is over those $j$ for which $\fa_j$ is nonzero.
Similarly, if
$v\colon R\to\R_{\ge0}$ is a valuation, 
then we set 
\begin{equation*}
  v(\fa_\bullet):=\sup_j j^{-1}v(\fa_j)=
  \lim_{j\to\infty} j^{-1}v(\fa_j),
\end{equation*}
where again the limit is over the $j$ for which $\fa_j$ is nonzero.
One can show that 
\begin{equation}\tag{$\star$}
  \lct(\fa_\bullet)=\inf_v\frac{A(v)}{v(\fa_\bullet)},
\end{equation}
where the infimum is over quasimonomial valuations $v$, that is,
valuations that are monomial in suitable coordinates on a suitable
blowup of $\Spec R$. Here $A(v)$ is the \emph{log discrepancy} of $v$.
Finally, we say that a quasimonomial valuation $v$ \emph{computes}
$\lct(\fa_\bullet)$ if the infimum in~($\star$) is achieved by $v$.

Conjecture~C holds in dimension two, see~\cite[\S9]{graded}. 
In higher dimensions, it is open.
Our main result is
\begin{ThmD}
  If Conjecture~C holds for any dimension $m\le n$ and any
  algebraically closed field $k$ of characteristic zero, then 
  Conjecture~B holds in dimension $n$.
\end{ThmD}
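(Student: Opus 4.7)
The plan is to associate to $\f$ a graded sequence of ideals whose log canonical threshold equals $c_x(\f)$, apply Conjecture~C to obtain a quasimonomial valuation $v$ computing it, and then exploit the monomial structure of $v$ on a suitable birational modification to exhibit a sublevel set of $\f$ of volume $\gtrsim r^2$.

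Fix $\f$ with $c\=c_x(\f)<\infty$ and set
\[
  \fa_m(\f)\=\{f\in\cO_{X,x}\mid |f|\le C_fe^{m\f}\text{ near }x\text{ for some }C_f>0\};
\]
a direct multiplicative estimate shows this is a graded sequence of ideals in $\cO_{X,x}$. Replacing $\f$ by $\max(\f,A\log|z-x|)$ for $A$ sufficiently large---an operation altering neither $c_x(\f)$ nor the asymptotic volume appearing in Conjecture~B---one may arrange that $\fa_1$ is $\fm$-primary. Passing to the formal completion $\hcO_{X,x}\simeq\C\llbracket z_1,\dots,z_n\rrbracket$ and invoking Artin-type approximation together with the compatibility of quasimonomial valuations under completion, we transfer $\fa_\bullet$ to a graded sequence in a polynomial ring over $\C$, placing the situation inside the algebraic framework of Conjecture~C.

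One next verifies $\lct(\fa_\bullet(\f))=c$. The containment $\fa_m\subseteq\fJ(m\f)$ gives $\lct(\fa_\bullet)\ge c$; conversely, the Demailly--Koll\'ar formula $c=\inf_vA(v)/v(\f)$ (over divisorial $v$), combined with the identity $v(\f)=\lim_mv(\fa_m)/m$ and the analogous formula $\lct(\fa_\bullet)=\inf_vA(v)/v(\fa_\bullet)$, yields $\lct(\fa_\bullet)\le c$. Applying Conjecture~C then produces a quasimonomial valuation $v$ satisfying $A(v)=c\,v(\f)$. The overall reduction may force one to restrict $v$ or $\f$ to strict subvarieties, which accounts for the need of Conjecture~C in all dimensions $\le n$, not just dimension $n$.

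The main obstacle is converting the extremal valuation $v$ into the volume estimate. Choose a proper birational $\pi\colon Y\to X$ on which $v$ is monomial with positive weights $(a_1,\dots,a_r)$ in local coordinates $(y_1,\dots,y_n)$ at some $\xi\in Y$; if $b_i$ denotes the log discrepancy of $\{y_i=0\}$, then $A(v)=\sum a_ib_i$ and $|\mathrm{Jac}\,\pi|^2\asymp\prod_{i\le r}|y_i|^{2(b_i-1)}$ near $\xi$. On the polycylinder
\[
  P_r\=\{y\in Y\mid|y_i|\le r^{a_i/A(v)}\text{ for }i\le r,\ |y_j|\le\delta\text{ for }j>r\}
\]
the integral $\int_{P_r}|\mathrm{Jac}\,\pi|^2\,dV_Y$ is $\asymp r^2$, precisely because $\sum a_ib_i=A(v)$. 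If one can establish the pointwise bound $\f\circ\pi\le\tfrac{1}{c}\log r+O(1)$ on all of $P_r$, then $\pi(P_r)\subseteq\{c\f<\log r+O(1)\}$ and Conjecture~B follows. The valuative identity $A(v)=c\,v(\f)$ encodes exactly this inequality along the monomial curves $|y_i|=t^{a_i}$ and, for $\f$ with analytic singularities, the bound on $P_r$ follows from the standard monomial estimate $|f|\le C\max_\alpha\prod|y_i|^{\alpha_i}$ for $f\in\fa_m$. Upgrading this to general psh $\f$ across the full polycylinder is the hard part, and should rest on a Demailly-type regularization of $\f$ together with the extremality of $v$.
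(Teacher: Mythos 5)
Your proposal contains a fatal gap at the very first reduction step. You propose replacing $\f$ by $\max(\f, A\log|z-x|)$ for $A$ large, claiming this "alters neither $c_x(\f)$ nor the asymptotic volume." This is false whenever $\f$ does not have an isolated singularity at $x$. Consider $\f=\log|z_1|$ in $\C^2$, so $c_0(\f)=1$. A direct computation shows that $\tilde\f_A:=\max(\log|z_1|, A\log\max(|z_1|,|z_2|))$ satisfies $c_0(\tilde\f_A)=1+1/A$ for every finite $A$. Thus the truncation strictly increases the complex singularity exponent, and no finite $A$ recovers $c_x(\f)$. This is precisely the difficulty that makes the theorem nontrivial beyond isolated singularities, and it cannot be side-stepped by a local max-construction at the point $x$ itself.

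The paper's approach handles this obstruction by first passing to the log canonical locus $V:=\{y\mid c_y^\fq(\f)\le\la\}$ and localizing at its generic point: one works in $\cO_{x,V}$, where the truncation is taken as $\max(\f,p\log|I_V|)$ rather than $\max(\f,A\log|\fm_x|)$. The key lemma (the paper's Lemma~\ref{L201}) shows that this $V$-adapted truncation preserves $c_x^\fq(\f)$ provided $p$ exceeds an explicit threshold determined by the Nullstellensatz applied to a colon ideal $(\cJ(\mu\f):\fq)$. Your proposal misses this entire localization step. Related to this, you propose "transferring" the graded sequence to a polynomial ring via formal completion and Artin approximation; the paper instead uses Conjecture~C$'$ (valid for excellent regular domains, shown in~\cite[Thm~7.6]{graded} to be equivalent to Conjecture~C), which applies directly to $\cO_{x,V}$ without any such transfer. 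Note also that $\dim\cO_{x,V}=\operatorname{codim}V$ can be strictly less than $n$, which is exactly why Conjecture~C is needed in all dimensions $\le n$ — not, as you suggest, because of some later restriction of $v$ or $\f$ to subvarieties.

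Beyond this, two further points. First, the paper works with the multiplier-ideal subadditive sequence $\fb_j=\cJ(j\f)$ (controlled growth, Demailly approximation, Ohsawa--Takegoshi) rather than your sequence $\fa_m(\f)$ of pointwise bounds; the former has the essential properties $c_x^\fq(\f)=\lct^\fq_x(\fb_\bullet)$ and $v(\f)=v(\fb_\bullet\cdot\cO_{x,V})$, which you would still need to establish for $\fa_\bullet(\f)$. Second, you acknowledge that the pointwise bound $\f\circ\pi\le\tfrac1c\log r+O(1)$ on the polycylinder is "the hard part" and leave it open. In the paper this bound comes for free from the definition of the Kiselman number along the stratum $Z^{\mathrm{an}}$: convexity of $H(t)=\sup_{D_u(t)}\f$ immediately gives $\f\le\tau(\f)\max_i\a_i^{-1}\log|u_i|+O(1)$ near a general point of $Z^{\mathrm{an}}$, and the identification $\tau(\f\circ\pi^{\mathrm{an}})=v(\fb_\bullet\cdot\cO_{x,V})$ (Proposition~\ref{P102}(iii)) closes the loop. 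Your volume estimate on $P_r$ is essentially the right computation, but without the localization and the Kiselman-number machinery the preceding reductions do not hold up.
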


In fact, we prove a slightly more general result, with the log
canonical threshold replaced by more general jumping numbers 
in the sense of~\cite{ELSV}, see Theorem~D' in~\S\ref{S115}.
This result has the following consequence. 
Let $\f$ be a psh function on a complex manifold $U$.
Recall that the \emph{multiplier ideal} $\cJ(\f)$ of $\f$ is the analytic
ideal sheaf on $U$ whose stalk at a point $x$ is given by 
the set of holomorphic germs $f\in\cO_x$ such that 
$|f|^2e^{-2\f}$ is locally integrable at $x$. This is a coherent 
ideal sheaf. Now define $\cJ^+(\f)$ as the (increasing, locally stationary) 
limit of $\cJ((1+\e)\f)$ as $\e\searrow0$.
We then show that a suitable generalization of 
Conjecture~C implies that $\cJ^+(\f)=\cJ(\f)$, 
see Remark~\ref{R102}.

We note that one can formulate a version of Conjecture~C in a more general setting,
dealing with arbitrary graded sequences on regular 
excellent connected schemes over $\Q$.
It was shown in \cite{graded} that this more general conjecture follows from the special
case in Conjecture~C above. 

One can also formulate a similar conjecture for subadditive sequences. Recall that
a sequence $\fb_\bullet=(\fb_j)_{j=1}^\infty$ of nonzero ideals in $R$ is \emph{subadditive} if 
$\fb_i\cdot\fb_j\supseteq\fb_{i+j}$ for all $i,j\ge1$. 
As in the case of graded sequences, we can define $\lct(\fb_{\bullet})$ and $v(\fb_{\bullet})$
when
$v\colon R\to\R_{\ge0}$ is a valuation, and we can consider whether $v$ computes
$\lct(\fb_{\bullet})$. 
We say that $\fb_\bullet$ has \emph{controlled growth} if 
\begin{equation*}
 \frac1jv(\fb_j)\le v(\fb_\bullet)\le\frac1j(v(\fb_j)+A(v))
\end{equation*}
for all $j\ge 1$ and all quasimonomial valuations $v$ on $R$ (with the left inequality being obvious). 
Subadditive systems usually arise as multiplier ideals and then 
are of controlled growth, see Proposition~\ref{P102}
and also~\cite[Proposition~2.13]{graded}. 
We show that Conjecture~C implies (in fact, it is equivalent to) the following statement.

\begin{ConjE}
  Let $\fb_\bullet$ be a subadditive sequence of ideals in
  an excellent regular domain $R$ of equicharacteristic zero.
  If $\fb_\bullet$ is of controlled growth
  and there is a maximal ideal $\fm$ in $R$ and a positive integer $p$ such that
  $\fm^{pj}\subseteq\fb_j$ for all $j$, then there exists
  a quasimonomial valuation $v$ on $R$ such that 
$v$ computes $\lct(\fb_\bullet)$.
\end{ConjE}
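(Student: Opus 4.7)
The strategy is to deduce Conjecture~E from Conjecture~C by associating to the given subadditive sequence $\fb_\bullet$ an auxiliary graded sequence $\fa_\bullet$ to which Conjecture~C applies, in such a way that any quasimonomial valuation computing $\lct(\fa_\bullet)$ automatically computes $\lct(\fb_\bullet)$. As a preliminary I would reduce to the setting where $R=k[x_1,\dots,x_m]$ with $k$ algebraically closed of characteristic zero and $\fm=(x_1,\dots,x_m)$: since $\fm^{pj}\subseteq\fb_j$, all the relevant invariants (values of quasimonomial valuations on the $\fb_j$, log discrepancies, and log canonical thresholds) are preserved under passage to the $\fm$-adic completion and the further reduction to a polynomial ring, by the standard machinery developed in \cite{graded} and used already to reduce the general graded version of Conjecture~C to the one stated here.

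The natural candidate for the associated graded sequence is the valuative saturation
$$\fa_j:=\bigl\{f\in R\,\bigm|\,v(f)\ge j\cdot v(\fb_\bullet)\ \text{for every quasimonomial valuation $v$ on $R$}\bigr\}.$$
This is automatically a graded sequence, since for $f\in\fa_i$ and $g\in\fa_j$ one has $v(fg)=v(f)+v(g)\ge(i+j)v(\fb_\bullet)$ for every quasimonomial $v$; moreover, $\fa_1$ is $\fm$-primary, because the hypothesis $\fm^p\subseteq\fb_1$ gives $v(\fb_\bullet)\le p\,v(\fm)$ and hence $\fm^p\subseteq\fa_1$. The tautological inequality $v(\fa_j)\ge j\,v(\fb_\bullet)$ yields $v(\fa_\bullet)\ge v(\fb_\bullet)$ and therefore $\lct(\fa_\bullet)\le\lct(\fb_\bullet)$. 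If one can establish the reverse equality $v(\fa_\bullet)=v(\fb_\bullet)$ for every quasimonomial $v$, then any $v$ with $A(v)=\lct(\fa_\bullet)\,v(\fa_\bullet)$ satisfies $A(v)=\lct(\fb_\bullet)\,v(\fb_\bullet)$, so applying Conjecture~C to $\fa_\bullet$ would finish the proof.

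The hard part will be precisely the reverse inequality $v(\fa_\bullet)\le v(\fb_\bullet)$, that is, producing enough elements of $\fa_j$ with small $v$-valuation. The naive approach is to choose $f\in\fb_{jN}$ realizing $v(f)\approx jN\,v(\fb_\bullet)$ for $N$ large (which exists by the controlled growth hypothesis applied to $v$) and then to verify that $f\in\fa_j$ by bounding $w(f)$ from below for every other quasimonomial $w$; this bound must itself come from controlled growth for $w$, and so introduces an error involving $A(w)$, which is not uniformly bounded as $w$ varies over quasimonomial valuations centered at $\fm$. Overcoming this non-uniformity is the main technical obstacle: I expect it to require refining the construction of $\fa_\bullet$ (for example, by an asymptotic multiplier-ideal-type saturation in the spirit of Proposition~\ref{P102}, which would realize $\fb_\bullet$ up to the relevant equivalence as the multiplier ideal system of a suitable graded sequence), together with a careful analysis of the behavior of controlled-growth subadditive sequences viewed as functions on the space of quasimonomial valuations.
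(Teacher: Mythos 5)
Your construction of $\fa_\bullet$ by saturating over \emph{all} quasimonomial valuations is genuinely different from what the paper does, and the obstruction you identify at the end --- the lack of a uniform bound on $A(w)$ as $w$ ranges over quasimonomial valuations centered at $\fm$ --- is a real gap, not a technicality. Indeed $A$ is unbounded on the set of normalized quasimonomial valuations, so the error term $A(w)/(jN)$ coming from controlled growth does not go to zero uniformly in $w$ as $N\to\infty$, and there is no apparent way to certify that your chosen $f\in\fb_{jN}$ lands in $\fa_{j'}$ for $j'$ close to $jN$. Your construction does give $v(\fa_\bullet)\geq v(\fb_\bullet)$ for every quasimonomial $v$ and hence $\lct(\fa_\bullet)\leq\lct(\fb_\bullet)$, but without the reverse estimate (or at least the reverse inequality of thresholds, which you also do not have), a quasimonomial valuation computing $\lct(\fa_\bullet)$ has no reason to compute $\lct(\fb_\bullet)$.

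The paper's proof (Proposition~\ref{P101} together with Remark~\ref{special_equivalence}) sidesteps the uniformity problem entirely. The first step is Proposition~\ref{existence_subadditive}: by compactness of the set $V_M$ of valuations with $v(\fm)=1$ and $A(v)\leq M$, lower semicontinuity of $A$, and continuity of $v\mapsto v(\fb_\bullet)$ on $V_M$ (the last using controlled growth), there exists a \emph{single} valuation $w$ with $A(w)<\infty$, not necessarily quasimonomial, which computes $\lct^{\fq}(\fb_\bullet)$. One then takes the graded sequence built from $w$ alone, namely $\fa_j:=\{f\in R\mid w(f)\geq j\}$, so that $w(\fa_\bullet)=1$. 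Applying Conjecture~C' (equivalently C'', and with $\fq=R$ this is Conjecture~C after the reduction to a polynomial ring) produces a quasimonomial $v$ computing $\lct^{\fq}(\fa_\bullet)$. The decisive point is that if $\gamma:=v(\fa_\bullet)$, then $\gamma=\inf\{v(f)/w(f)\mid w(f)>0\}$, so $\gamma^{-1}v\geq w$ as functions on $R$ and in particular $\gamma^{-1}v(\fb_\bullet)\geq w(\fb_\bullet)$. Combined with $\gamma^{-1}(A(v)+v(\fq))\leq A(w)+w(\fq)$, which follows from $v$ computing $\lct^{\fq}(\fa_\bullet)$ and $w(\fa_\bullet)=1$, one gets
\[
\frac{A(v)+v(\fq)}{v(\fb_\bullet)}\leq\frac{A(w)+w(\fq)}{w(\fb_\bullet)}=\lct^{\fq}(\fb_\bullet),
\]
and since the left side is always $\geq\lct^{\fq}(\fb_\bullet)$, equality holds and $v$ computes $\lct^{\fq}(\fb_\bullet)$. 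Thus the paper never needs $v(\fa_\bullet)=v(\fb_\bullet)$, nor any comparison between $\fa_\bullet$ and $\fb_\bullet$ across all valuations; it only needs the domination $\gamma^{-1}v\geq w$ against the one auxiliary valuation $w$, and this is automatic from the definition of $\fa_\bullet$. Replacing your global valuative saturation with the $w$-based sequence and invoking the compactness result is the missing idea.
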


It is this form of the conjecture that we will use in the proof of Theorem~D.
Let us now indicate the strategy of this proof.
Suppose $\f$ is a psh germ at a point $x$ on a complex manifold
with $\la:=c_x(\f)<\infty$.
To $\f$ we associate a sequence $\fb_\bullet=(\fb_j)_{j\ge1}$ of ideals 
by letting $\fb_j$ be the analytic multiplier ideal of the 
psh function $j\f$. It follows from~\cite{DEL} that $\fb_\bullet$ is
subadditive. Further, using techniques due to 
Demailly~\cite{Dem92,Dem93}, one can show
that $\fb_\bullet$ has controlled growth and that 
the singularities of $\fb_j$ closely approximate those of $\f$.

The complex singularity exponent $c_y(\f)$ is a lower semicontinuous 
function of the point $y$, so we can define $V$ as the ``log canonical
locus of $\la\f$'', that is, the germ at $x$ of the analytic set defined
by $c_y(\f)\le\la$. 
Assume that $V$ is smooth at $x$ (this is in fact no restriction)
and let $\cO_{x,V}$ be the localization of the ring of holomorphic
germs at $x$, at the prime ideal defined by $V$. 
Since the latter ring is excellent~\cite[Theorem~102]{Matsumura},
so is $\cO_{x,V}$.

We now have the subadditive sequence $\fb_\bullet\cdot\cO_{x,V}$ 
of ideals in the excellent, regular local ring $\cO_{x,V}$.
By applying Conjecture~E, we conclude that there exists a quasimonomial
valuation of $\cO_{x,V}$ computing $\lct(\fb_\bullet\cdot\cO_{x,V})$.
This valuation is monomial in suitable algebraic coordinates
on a regular scheme $X$ admitting a projective birational map to
$\Spec\cO_{x,V}$. We can ``analytify'' the latter map and interpret
the quasimonomial valuation as an analytic invariant, the 
\emph{Kiselman number} of $\f$. Using basic properties of 
psh functions and Kiselman numbers we then obtain the 
desired volume estimates in Conjecture~B.

As already mentioned, Conjecture~C holds in dimension two, so 
we obtain a new proof of the openness conjecture in dimension two.
In fact, this proof is quite similar to the one in~\cite{valmul}. 
The strategy in~\loccit is to consider a subspace $\cV$ of 
semivaluations $v\colon \cO_x\to[0,+\infty]$ satisfying $v(\fm_x)=1$,
where $\fm_x$ is the maximal ideal.
One can equip $\cV$ with a natural topology in which it is compact Hausdorff;
it also has the structure of a tree and is studied in detail 
in~\cite{valtree} (see also~\cite{dynberko}). To a psh germ at $x$
one can associate a lower semicontinuous function on $\cV$ whose minimum is equal
to $c_x(\f)$. It turns out that the minimum must occur for a semivaluation that is
either quasimonomial or associated to the germ of an analytic curve at $x$.
In both cases one can deduce the volume estimate in Conjecture~B
using a simplified version of the arguments in~\S\ref{S117}.

In higher dimensions, the analogue of the space $\cV$ was studied 
in~\cite{hiro}, where it was shown that $c_x(\f)$ can be computed
using quasimonomial valuations. However, when $\f$ does not have an
isolated singularity at the origin, it seems difficult to define a suitable 
lower semicontinuous functional directly on $\cV$, having
minimum equal to $c_x(\f)$.
The idea is to instead work at a generic point of the log canonical locus
of $\la\f$. This does not quite make sense in the analytic category, and for
this reason we pass to algebraic arguments using the subadditive 
sequence $\fb_\bullet$. In the algebraic category, localization arguments 
work quite well and were extensively used in~\cite{graded}.

The idea of studing psh functions using valuations was systematically
developed in~\cite{pshsing,valmul,hiro} but appears already in
the work of Lelong~\cite{Lelong} and Kiselman~\cite{Kis87,Kis94}.
For some recent work on the singularities of psh functions, see 
also~\cite{Rash06,Ber06,Lag10,Gue10}.

\smallskip
The paper is organized as follows. 
In~\S\ref{S102} we review facts about 
sequences of ideals and log canonical thresholds
in an algebraic setting and adapt some of the 
statements to the setting of complex analytic manifolds.
We also prove the equivalence of Conjectures~C and E above.
In~\S\ref{S104} we discuss plurisubharmonic functions,
Kiselman numbers, multiplier ideal sheaves and 
the Demailly approximation procedure.
Finally, the main results are proved in~\S\ref{S103}.

\smallskip
\textbf{Acknowledgment}.
We thank A.~Rashkovskii for spotting a mistake in the proof of
Theorem~D' in an earlier version of the paper. We also thank the referee for 
a careful reading and several useful remarks.
%
%
%
%
%
%
\section{Background}\label{S102}
%
%
%
%
\subsection{Algebraic setting}\label{S116}
We start by recalling some basic algebraic facts.
For more details we refer to~\cite{graded} even though
much of what follows is standard material.
Let $R$ be an excellent, regular domain of equicharacteristic zero. 
In the cases we will consider, $R$ will 
be the localization at a prime ideal of the ring of germs of
holomorphic functions at a point in a complex manifold.
%
%
\subsubsection{Quasimonomial valuations}\label{S110}
By a \emph{valuation} on $R$ we mean a rank 1 valuation
$v\colon R\setminus\{0\}\to\R_{\ge0}$.
A valuation is \emph{divisorial} if there exists
a projective birational morphism $\pi\colon X\to\Spec R$, with $X$ regular,
a prime divisor $E$ on $X$ and a positive number $\a>0$
such that $v=\a\ord_E$, where $\ord_E$ denotes the order of vanishing 
along $E$.

More generally,
consider a projective birational morphism $\pi\colon X\to\Spec R$, with $X$ regular,
a reduced simple normal crossing divisor $E=\sum_{i\in I} D_i$
on $X$, a subset $J\subseteq I$ such that $\bigcap_{i\in J}D_i\ne\emptyset$,
an irreducible component $Z$ of $\bigcap_{i\in J}D_i$ 
and nonnegative numbers $\a_i\ge 0$, $i\in J$, not all zero.
Then there is a unique valuation $v$ on $R$ such that
the following holds: if $(u_j)_{j\in J}$ are local coordinates
at the generic point $\xi$ of $Z$ such that $D_j=\{u_j=0\}$ and we write 
$f\in R\subseteq\cO_{X,\xi}\subseteq\widehat{\cO}_{X,\xi}$ as
$f=\sum_\b c_\b u^\b$, with $c_\b\in\widehat{\cO}_{X,\xi}$ and, for each $\b$,
either $c_\b=0$ or $c_\b(\xi)\ne0$, then
\begin{equation*}
  v(f)=\min\{\sum_i\a_i\b_i\mid c_\beta\ne 0\}.
\end{equation*}
We call such a valuation 
\emph{quasimonomial}\footnote{As opposed to the convention in~\cite{graded}, 
  we do not consider the trivial valuation, which is identically zero on 
  $R\setminus\{0\}$, to be quasimonomial.}
and we say that 
the morphism $\pi\colon X\to\Spec R$ is \emph{adapted} to $v$.
In general, $\pi$ is not unique. 
On the other hand, we can always choose it so
that $\a_i>0$ for all $i$ and that the $\a_i$ are rationally independent,
see~\cite[Lemma~3.6]{graded}.
Finally note that quasimonomial valuations are also known as 
\emph{Abhyankar valuations}, see~\cite{ELS} and~\cite[\S3.2]{graded}.

%
%
\subsubsection{Log discrepancy}
Using the notation above we define the \emph{log discrepancy} $A(v)$ 
of a quasimonomial valuation $v$ by
\begin{equation*}
  A(v)=\sum_{j\in J}\a_j(1+\ord_{D_j}(K)),
\end{equation*}
where $K=K_{X/\Spec R}$ is the relative canonical divisor. 
In particular, $A(\ord_{D_j})=1+\ord_{D_j}(K)$.
One can show that the log discrepancy of a quasimonomial 
valuation does not depend on any choices made,
see~\cite[\S5.1]{graded}. Furthermore, one can extend the definition of
log discrepancy to arbitrary valuations on $R$; in this case the log discrepancy 
can be infinite, see \cite[\S5.2]{graded}.
%
%
\subsubsection{Log canonical thresholds and jumping numbers}\label{S112}
If $\fa\subseteq R$ is a proper nonzero ideal, then we define the 
\emph{log canonical threshold} of $\fa$ as 
\begin{equation}\label{e123}
  \lct(\fa)=\inf_v\frac{A(v)}{v(\fa)},
\end{equation}
where the infimum is over all nonzero valuations on $R$ (it is enough
to only consider quasimonomial or even divisorial valuations).
The quantity $\Arn(\fa)=\lct(\fa)^{-1}$ is called
the \emph{Arnold multiplicity} of $\fa$.
More generally, if $\fq\subseteq R$ is a nonzero ideal, then we define
\begin{equation}\label{e124}
  \lct^\fq(\fa)=\inf_v\frac{A(v)+v(\fq)}{v(\fa)}
  \quad\text{and}\quad
  \Arn^\fq(\fa)=\lct^\fq(\fa)^{-1}.
\end{equation}
Then $\lct^\fq(\fa)$ is a \emph{jumping number} of $\fa$ in the sense of~\cite{ELSV},
and all jumping numbers appear in this way. 
Note that $\lct(\fa)=\lct^R(\fa)$. 
The infimum in~\eqref{e123} (resp.~\eqref{e124}) is
attained at some divisorial valuation associated to a 
prime divisor on some log resolution of $\fa$ (resp.\ $\fa\cdot\fq$).
We make the convention that if $\fa=0$ or
$\fa=R$, then $\lct^{\fq}(\fa)=0$ or $\infty$, respectively.
%
%
\subsubsection{Graded sequences}\label{S1102}
We now recall the definitions of the asymptotic invariants for graded sequences of ideals.
For proofs and details we refer to \cite{graded}, see also~\cite{Musgraded}.
A sequence of ideals $\fa_{\bullet}=(\fa_j)_{j\geq 1}$ is a \emph{graded sequence}
if $\fa_i\cdot\fa_j\subseteq\fa_{i+j}$ for all $i,j\geq 1$. For example, if $v$ is a valuation on $R$
and $\alpha$ is a positive real number, then by putting $\fa_j:=\{f\in R\mid v(f)\geq j\alpha\}$,
we obtain a graded sequence in $R$. We refer to 
\cite[\S10.1]{positivity} for other examples of graded sequences of ideals. 
We assume that all graded sequences are nonzero in the sense that some $\fa_j$ is nonzero.

It follows from the definition that if $\fa_{\bullet}$ is a graded
sequence of ideals in $R$ and $v$ is a valuation on $R$, then 
$v(\fa_{i+j})\leq v(\fa_{i})+v(\fa_{j})$ for all $i,j\geq 1$. 
By Fekete's Lemma, this subadditivity property implies that 
\begin{equation*}
  v(\fa_{\bullet}):=\inf_{j\geq 1}\frac{v(\fa_j)}{j}=\lim_{j\to\infty}\frac{v(\fa_j)}{j},
\end{equation*}
where the limit is over those $j$ such that $\fa_j$ is nonzero.
Similarly, if $\fq$ is a nonzero ideal in $R$, we have
\begin{equation*}
\lct^{\fq}(\fa_{\bullet}):=\sup_{j\geq 1}j\cdot \lct^{\fq}(\fa_j)=\lim_{j\to\infty} j\cdot
\lct^{\fq}(\fa_j),
\end{equation*}
where the limit is over those $j$ such that $\fa_j$ is nonzero. We also put
\begin{equation*}
  \Arn^{\fq}(\fa_{\bullet}):=\lct^{\fq}(\fa_{\bullet})^{-1}.
\end{equation*}
The jumping number $\lct^{\fq}(\fa_{\bullet})$ is positive, but may be infinite.
One can show (see \cite[Corolloray~6.9]{graded}) 
that as in the case of one ideal, we have
\begin{equation}\label{inf_lct}
  \lct^{\fq}(\fa_{\bullet})=\inf_v\frac{A(v)+v(\fq)}{v(\fa_{\bullet})},
\end{equation}
where the infimum is over all nonzero valuations of $R$ 
(it is enough, in fact, to only consider 
quasimonomial or even divisorial valuations). 
 
%
%
\subsubsection{Subadditive sequences}\label{S111}
Let us now review the corresponding notions for the case of subadditive sequences,
referring for details to \cite{graded}. 
A sequence $\fb_\bullet=(\fb_j)_{j\ge0}$ of nonzero ideals in $R$ is called 
\emph{subadditive} if
$\fb_{i+j}\subseteq\fb_i\cdot\fb_j$ for all $i,j\ge0$.
This implies that $v(\fb_{i+j})\ge v(\fb_i)+v(\fb_j)$ for all 
valuations $v$ on $R$ and hence that
\begin{equation*}
  v(\fb_\bullet):=\sup_j\frac{v(\fb_j)}{j}=\lim_{j\to\infty}\frac{v(\fb_j)}{j}\in\R_{\ge0}\cup\{+\infty\}.
\end{equation*}
A subadditive sequence $\fb_\bullet$ has \emph{controlled growth} if 
\begin{equation}\label{e125}
  v(\fb_\bullet)\le\frac{v(\fb_j)}{j}+\frac{A(v)}{j}
\end{equation}
for all $j\ge 1$ and all quasimonomial valuation $v$ (in fact, it is enough to only
impose this condition for divisorial valuations).
In particular, for such $\fb_{\bullet}$ we have $v(\fb_{\bullet})<\infty$
for every quasimonomial valuation $v$. 

For every subadditive system $\fb_\bullet$ and every nonzero ideal $\fq\subseteq R$,
we define
\begin{equation*}
  \lct^\fq(\fb_\bullet):=\inf_{j\geq 1}j\cdot \lct^{\fq}(\fb_j)=\lim_{j\to\infty} j\cdot\lct^{\fq}(\fb_j).
\end{equation*}
We also put $\Arn^\fq(\fb_\bullet)=\lct^\fq(\fb_\bullet)^{-1}$.
For every subadditive sequence we have
\begin{equation}\label{e106v2}
  \lct^\fq(\fb_\bullet)=\inf_v\frac{A(v)+v(\fq)}{v(\fb_\bullet)},
\end{equation}
where the infimum is over all valuations $v$ of $R$ with $A(v)<\infty$ 
(see \cite[Corollary~6.8]{graded}).
It is clear from the definition that $\lct^\fq(\fb_\bullet)<\infty$ 
unless $\fb_j= R$ for all $j$.
Moreover,  if $\fb_\bullet$ has controlled growth, then 
$\lct^\fq(\fb_\bullet)>0$. Indeed, one can easily see that
\begin{equation*}
  \frac1j\Arn^\fq(\fb_j)
  \le\Arn^\fq(\fb_\bullet)
  \le\frac1j\Arn^\fq(\fb_j)+\frac1j
\end{equation*}
for all $j\ge 1$, so that $\Arn^\fq(\fb_\bullet)<\infty$.

Subadditive sequences arise algebraically as asymptotic multiplier ideals.
If $\fa_{\bullet}$ is a graded sequence of ideals in $R$ and if 
$\fb_j={\mathcal J}(\fa_{\bullet}^j)$ is the asymptotic multiplier ideal of
$\fa_{\bullet}$ of exponent $j$, then $\fb_{\bullet}$ is a subadditive sequence of
controlled growth (see \cite[Proposition~2.13]{graded}). Furthermore, we have
$\lct^{\fq}(\fa_{\bullet})=\lct^{\fq}(\fb_{\bullet})$ for every nonzero ideal $\fq$, and
$v(\fa_{\bullet})=v(\fb_{\bullet})$ for every valuation $v$ with $A(v)<\infty$
(see \cite[Proposition~2.14,~6.2]{graded}). 
%
%
\subsubsection{Computing jumping numbers of graded sequences}\label{S113} 
If $\fa_{\bullet}$ is a graded sequence of ideals in $R$ and $\fq$ is a nonzero ideal, 
then we say that a nonzero valuation 
$v$ \emph{computes} $\lct^{\fq}(\fa_{\bullet})$ if $v$ achieves the
infimum in (\ref{inf_lct}), that is, $\lct^{\fq}(\fa_{\bullet})=
\frac{A(v)+v(\fq)}{v(\fa_{\bullet})}$. Note that if $\lct^{\fq}(\fa_{\bullet})=\infty$, then
$v(\fa_{\bullet})=0$ for every $v$; hence every $v$ computes $\lct^{\fq}(\fa_{\bullet})$.
In what follows we will focus on the case $\lct^{\fq}(\fa_{\bullet})<\infty$; 
then every valuation $v$ that computes $\lct^{\fq}(\fa_{\bullet})$ 
must satisfy $A(v)<\infty$. 

It was shown in \cite[Theorem~7.3]{graded} that for every graded sequence $\fa_{\bullet}$
and every nonzero ideal $\fq$,
there is a valuation $v$ on $R$ that computes $\lct^{\fq}(\fa_{\bullet})$. One should contrast this with
\begin{ConjCp}
  For every excellent regular domain $R$ of equicharacteristic zero,
  every graded sequence of ideals $\fa_{\bullet}$, and every
  nonzero ideal $\fq$ in $R$, there is a quasimonomial valuation $v$ on $R$ that computes
  $\lct^{\fq}(\fa_{\bullet})$.
\end{ConjCp}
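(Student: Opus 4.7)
The plan is to reduce Conjecture~C' to the special case Conjecture~C (polynomial ring, $\fq=R$, $\fa_1$ is $\fm$-primary); this reduction is essentially carried out in~\cite{graded} and I would follow its strategy. Throughout I may assume $\lct^\fq(\fa_\bullet)<\infty$, since otherwise every valuation computes the jumping number.

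First, I would eliminate $\fq$. Using that the asymptotic multiplier ideals $\fb_j=\cJ(\fa_\bullet^j)$ form a subadditive sequence of controlled growth with the same jumping numbers and asymptotic values as $\fa_\bullet$ (see~\cite[Propositions~2.13 and~2.14]{graded}), and the promised equivalence between Conjectures~C and~E, one may work with $\fb_\bullet$ in place of $\fa_\bullet$. A log resolution $\pi\colon X\to\Spec R$ of $\fq$ gives $\fq\cO_X=\cO_X(-\sum_i a_iD_i)$, and on $X$ one may twist the pullback of $\fb_\bullet$ so that the term $v(\fq)$ in the jumping number formula~(\ref{e106v2}) is absorbed into the valuation of a new subadditive sequence. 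This reduces the problem to finding a quasimonomial valuation that computes the $\lct$ of an auxiliary sequence, now with no $\fq$ present.

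Second, I would localize. Pick a valuation $v_0$ computing the jumping number (which exists by~\cite[Theorem~7.3]{graded}) and let $\fp$ be its center in $R$. Since $R_\fp$ is again excellent and regular, and since localization respects both log discrepancy and asymptotic valuations, it suffices to work in $R_\fp$. Completing and applying the Cohen structure theorem, $\hat R_\fp\cong k\llbracket x_1,\dots,x_m\rrbracket$ for a coefficient field $k$ of characteristic zero, which I may take to be algebraically closed after a faithfully flat base change. A N\'eron--Popescu or Artin-type approximation then descends the problem from the power series ring to a localization of $k[x_1,\dots,x_m]$ at a maximal ideal $\fm$. After enlarging $\fa_j$ to $\fa_j+\fm^{Nj}$ for $N$ sufficiently large---which, by a Skoda-type estimate, preserves the relevant jumping number once $N$ exceeds an effective bound depending only on $\lct(\fa_\bullet)$ and $m$---the ideal $\fa_1$ becomes $\fm$-primary, placing us precisely in the setting of Conjecture~C.

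The main obstacle is of course Conjecture~C itself, which is presently open beyond dimension two. Within the reductions above, the most delicate technical point is preserving quasimonomiality (and the correct log discrepancy) through the successive operations of localization, completion, polynomial approximation, and the passage between graded and subadditive sequences. Each step requires fixing an adapted birational model on the source and checking that the corresponding monomial weights descend to a quasimonomial valuation on the original $R$ with the right invariants; this bookkeeping is the bulk of the work in~\cite{graded} and would constitute the main technical labor in any direct proof.
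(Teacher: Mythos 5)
The statement you have been asked to prove is Conjecture~C', and it is exactly what the label suggests: an \emph{open conjecture}. The paper does not prove it anywhere. What the paper does prove is the equivalence of Conjectures~C', C'', and~E' (Proposition~\ref{P101}, with the C'$\Leftrightarrow$C'' part cited from~\cite[Theorem~7.6]{graded}), and that any one of them implies the openness conjecture of Demailly and Koll\'ar (Theorems~D and~D'). You do recognize this at the end of your write-up when you say Conjecture~C is open beyond dimension two, so what you have offered is not a proof but a sketch of the known reduction from C' to C''/C, which the paper simply cites.

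On the reduction itself, several steps as you sketch them would need repair. Your proposed elimination of $\fq$ by passing to a log resolution $\pi\colon X\to\Spec R$ and absorbing $v(\fq)$ into a new subadditive sequence on $X$ does not go through directly: $X$ is projective over $\Spec R$, not affine, whereas Conjecture~C' is formulated for an excellent regular \emph{domain}, and the standard argument in~\cite{graded} keeps $\fq$ throughout rather than trying to remove it. The localization step is sound, but the subsequent appeal to completion followed by N\'eron--Popescu or Artin approximation is delicate: one must show that quasimonomiality and the log discrepancy $A(v)$ descend through the approximation, and that a computing valuation on the approximated ring produces a computing quasimonomial valuation on the original $R$. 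This is precisely the content of the ``main technical labor'' you defer at the end, and it is also precisely why the reduction is nontrivial and is attributed to~\cite{graded} rather than being reproved here. Finally, the enlargement $\fa_j\mapsto\fa_j+\fm^{Nj}$ to force $\fm$-primariness is a genuine technique, but the claim that a Skoda-type bound makes the jumping number stable needs to be stated and checked; as written it is an assertion, not an argument. In short, even granting Conjecture~C, the proposal has genuine gaps in the reduction; and since Conjecture~C is itself open, no chain of reductions to it can constitute a proof of Conjecture~C'.
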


One can also consider the following special case of this conjecture.
\begin{ConjCb}
  If $k$ is an algebraically closed field of characteristic zero and $R=k[x_1,\ldots,x_m]$,
  then for every nonzero ideal $\fq$ and every graded sequence $\fa_{\bullet}$ of ideals
  in $R$ such that $\fa_1\supseteq\fm^p$ for some maximal ideal $\fm$ in $R$ and some
  $p\geq 1$, there is a quasimonomial valuation $v$ on $R$ that computes
  $\lct^{\fq}(\fa_{\bullet})$.
\end{ConjCb}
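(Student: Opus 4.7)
My plan is to reduce Conjecture~C'' to Conjecture~C. This mirrors the reduction (attributed to~\cite{graded}) from the general jumping-number version~C' to the simpler untwisted version~C; however, C'' has the additional $\fm$-primary hypothesis $\fa_1\supseteq\fm^p$, which needs to be preserved under the reduction.

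Since $\fa_1\supseteq\fm^p$, any valuation~$v$ with $v(\fa_\bullet)>0$ has center at the closed point~$\fm$. Localizing and completing, I may therefore work over $\hat R_\fm\cong k[[x_1,\ldots,x_m]]$. Next, take a log resolution $\pi\colon Y\to\Spec\hat R_\fm$ of the ideal~$\fq$, so that $\fq\cdot\cO_Y=\cO_Y(-F)$ for an effective SNC divisor~$F$, and cover~$Y$ by finitely many affine charts $U_\alpha=\Spec S_\alpha$ on each of which $\fq S_\alpha=(g_\alpha)$ is principal. For a quasimonomial valuation~$v$ on~$R$ with center on~$U_\alpha$, the transformation formula for log discrepancies yields
\[
A_R(v)+v(\fq)=A_{S_\alpha}(v)+v\bigl(\fk_\alpha\cdot(g_\alpha)\bigr),
\]
where $\fk_\alpha$ is the ideal of $K_{Y/R}|_{U_\alpha}$. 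Since~$\pi$ is a log resolution, $\fk_\alpha(g_\alpha)$ is a monomial ideal $\prod u_i^{b_i}$ in local SNC coordinates on~$U_\alpha$, and the problem on~$R$ becomes the analogous problem on each~$S_\alpha$ with a monomial twist.

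To complete the reduction, I would absorb the monomial twist into an ordinary log discrepancy on a suitable auxiliary regular ring. One approach is to pass to a ramified cover of~$S_\alpha$ tailored to the exponents~$(b_i)$; a cleaner alternative is to enlarge to a polynomial ring with additional variables $t_1,\ldots,t_m$ and build a compatible graded sequence that encodes the twist via the log discrepancies of the new monomial directions~$t_i$. After such an absorption, Conjecture~C applied to the transferred graded sequence in the enlarged polynomial ring yields a quasimonomial valuation computing the untwisted lct, which pulls back to the desired quasimonomial valuation on~$R$ computing $\lct^\fq(\fa_\bullet)$.

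The main obstacle is the absorption step: each natural candidate (ramified cover, additional variables, weighted blowup) modifies the log discrepancy, the graded sequence, and the $\fm$-primary hypothesis $\fa_1\supseteq\fm^p$ in ways that require careful coordination so that the transferred setup still satisfies the precise assumptions of Conjecture~C and so that minimizers correspond. An alternative strategy is to bypass the reduction entirely and attack Conjecture~C'' directly on the space~$\cV$ of normalized valuations centered at~$\fm$, which is compact Hausdorff in a Berkovich-type topology. The function $v\mapsto(A(v)+v(\fq))/v(\fa_\bullet)$ is lower semicontinuous on~$\cV$, so a minimizer exists; the conjecture's real content is that this minimizer is quasimonomial, which in dimension~2 was established in~\cite[\S9]{graded} via the tree structure on~$\cV$, and which in higher dimensions requires a new idea.
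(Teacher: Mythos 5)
The statement you were asked to prove is not a theorem of the paper: Conjecture~C'' is an \emph{open conjecture}, stated as such and known (per the paper) only in dimensions one and two. The paper never proves it; it only establishes implications and equivalences around it (Proposition~\ref{P101} shows C'~$\Leftrightarrow$~C''~$\Leftrightarrow$~E' using \cite[Thm~7.6]{graded}, Remark~\ref{special_equivalence} shows C~$\Leftrightarrow$~E, and Theorems~D and~D' derive the openness conjecture from C or~C'' respectively). So there is no proof in the paper to compare your attempt against, and any self-contained proof you produced in higher dimensions would be a genuinely new result.

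Within your sketch there are two concrete problems. First, your framing misreads what \cite[Thm~7.6]{graded} does: that theorem reduces C' (general excellent regular domain, general~$\fq$) to C'' (polynomial ring, general~$\fq$, plus the $\fm$-primary hypothesis); it does \emph{not} reduce the twisted case general~$\fq$ to the untwisted case $\fq=R$. Conjecture~C is the $\fq=R$ special case of~C'', and the paper never claims they are equivalent --- it proves C~$\Leftrightarrow$~E and C'~$\Leftrightarrow$~C''~$\Leftrightarrow$~E' as two separate equivalence chains. So the reduction you propose, if it could be carried out, would itself be a new contribution, not a reprise of a known argument. Second, and more importantly, you leave the crucial ``absorption'' step unproven and explicitly acknowledge that every candidate mechanism (ramified cover, auxiliary variables, weighted blowup) disturbs exactly the structures that need to be preserved; your fallback approach via compactness of the valuation space~$\cV$ locates a minimizer of $v\mapsto (A(v)+v(\fq))/v(\fa_\bullet)$, but the whole content of Conjecture~C'' is that this minimizer can be taken quasimonomial, which you again flag as requiring a new idea. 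In short, both branches of your argument stop precisely where the conjecture begins, which is consistent with the fact that this is the open problem the paper is built around.
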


Note that when the ideal $\fq$ is equal to $R$, Conjecture~C''
specializes to Conjecture~C in the Introduction. 
It is shown\footnote{For the comparison with~\cite{graded}, note that a ring $R$
has equicharacteristic zero iff $\Spec R$ is a scheme over $\Q$.}
in~\cite[Theorem~7.6]{graded} that Conjecture~C' holds for rings of
dimension $\leq n$ if and only if Conjecture C'' holds for rings of dimension 
$\leq n$.
We note that both  conjectures are trivially true in dimension one.  They are
 also true in dimension two.
A proof, modeled on ideas in~\cite{valmul} is given in~\cite[\S9]{graded}.
%
%
\subsubsection{Computing jumping numbers of subadditive sequences}
We now turn to the analogous considerations for subadditive sequences.
If $\fb_{\bullet}$ is such a sequence,
then a valuation $v$ with $A(v)<\infty$ computes $\lct^{\fq}(\fb_{\bullet})$
if $v$ achieves the infimum in (\ref{e106v2}).  
The following conjecture extends Conjecture~E from the Introduction to the case 
of arbitrary jumping numbers.
\begin{ConjEp}
  Let  $\fq$ be a nonzero ideal and $\fb_\bullet$ a subadditive sequence of ideals in
  an excellent regular domain $R$ of equicharacteristic zero.
  If $\fb_\bullet$ is of controlled growth
  and there is a maximal ideal $\fm$ in $R$ and a positive integer $p$ such that
  $\fm^{pj}\subseteq\fb_j$ for all $j$, then there exists
  a quasimonomial valuation $v$ on $R$ such that 
  $v$ computes $\lct^{\fq}(\fb_\bullet)$.
\end{ConjEp}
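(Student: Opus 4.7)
The strategy is to reduce Conjecture~E' to Conjecture~C' (equivalently, Conjecture~C''). After discarding the trivial case $\lct^\fq(\fb_\bullet)=\infty$ (in which every valuation is a minimizer), the plan is to construct from $\fb_\bullet$ a graded sequence $\fa_\bullet$ of ideals in $R$ such that for every quasimonomial valuation $v$ on $R$ with finite log discrepancy one has the equalities
\begin{equation*}
  v(\fa_\bullet)=v(\fb_\bullet) \qand \lct^\fq(\fa_\bullet)=\lct^\fq(\fb_\bullet).
\end{equation*}
Once such $\fa_\bullet$ is produced, Conjecture~C' applied to $\fa_\bullet$ yields a quasimonomial valuation $v$ computing $\lct^\fq(\fa_\bullet)$, and the displayed equalities force the same $v$ to compute $\lct^\fq(\fb_\bullet)$, proving Conjecture~E'.

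The natural way to obtain such a graded sequence is to realize $\fb_\bullet$, up to corrections of size $A(v)$, as the asymptotic multiplier ideals of some graded sequence $\fa_\bullet$. By Propositions~2.13 and~2.14 of~\cite{graded}, whenever $\fb_j=\cJ(\fa_\bullet^j)$ for a graded sequence $\fa_\bullet$, the resulting $\fb_\bullet$ is automatically subadditive of controlled growth and the invariants $v(\cdot)$ and $\lct^\fq(\cdot)$ agree on the nose. The controlled growth hypothesis on $\fb_\bullet$ is designed precisely so that this construction can be run in reverse: the quantitative bound $v(\fb_j)\ge j\cdot v(\fb_\bullet)-A(v)$ pins down the linear profile $j\mapsto j\cdot v(\fb_\bullet)$ that a putative $\fa_\bullet$ must realize, while the condition $\fm^{pj}\subseteq\fb_j$ guarantees that all relevant valuations are centered at $\fm$ and that $v(\fb_\bullet)\le p\cdot v(\fm)<\infty$, keeping the construction within the $\fm$-primary regime where the hypotheses of Conjecture~C'' are met. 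Localizing and, if needed, passing to the completion $\widehat{R}_\fm\cong k\llbracket x_1,\ldots,x_m\rrbracket$ preserves both $v(\cdot)$ and the jumping numbers, so one may assume $R$ is a formal power series ring.

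The principal obstacle is the construction step itself: the inclusion $\fa_i\cdot\fa_j\subseteq\fa_{i+j}$ that $\fa_\bullet$ must satisfy is the reverse of the inclusion $\fb_{i+j}\subseteq\fb_i\cdot\fb_j$ available for $\fb_\bullet$, so the passage from a subadditive to a graded sequence is not formal. One must choose each $\fa_j$ small enough that the graded inclusions hold (which forces $\fa_j$ to approximate $\fb_j$ from below in the relevant valuations), yet large enough that $v(\fa_j)\le j\cdot v(\fb_\bullet)+o(j)$ as $j\to\infty$, in order to secure the equality $v(\fa_\bullet)=v(\fb_\bullet)$. It is exactly the combination of controlled growth and $\fm^{pj}\subseteq\fb_j$ that supplies the uniform quantitative estimates, over all quasimonomial valuations simultaneously, needed to balance these two requirements and to transfer a quasimonomial minimizer for $\fa_\bullet$ (produced by Conjecture~C') back to one for $\fb_\bullet$.
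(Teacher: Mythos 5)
Your general strategy---reducing Conjecture~E' to Conjecture~C' by producing a graded sequence $\fa_\bullet$ and transferring a quasimonomial minimizer back---is the right one, and it is what the paper does (Proposition~2.2). But you leave the construction of $\fa_\bullet$ unresolved: you identify it as ``the principal obstacle'' and never actually carry it out, which is a genuine gap. Worse, the demand you place on $\fa_\bullet$, namely that $v(\fa_\bullet)=v(\fb_\bullet)$ for \emph{all} quasimonomial $v$, is stronger than needed and not obviously achievable; it is not the case that every subadditive sequence of controlled growth arises as the asymptotic multiplier ideal sequence $\cJ(\fa_\bullet^j)$ of some graded $\fa_\bullet$, so ``running the multiplier ideal construction in reverse'' is not a route one can follow formally.

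The paper sidesteps the difficulty with a different construction. First, a compactness argument on the space of normalized valuations with bounded log discrepancy (Proposition~\ref{existence_subadditive}) produces a not-necessarily-quasimonomial valuation $w$ with $A(w)<\infty$ that achieves the infimum defining $\lct^\fq(\fb_\bullet)$. One then sets $\fa_j:=\{f\in R\mid w(f)\geq j\}$; this is automatically a graded sequence with $w(\fa_\bullet)=1$. Applying Conjecture~C' gives a quasimonomial $v$ computing $\lct^\fq(\fa_\bullet)$, and one does \emph{not} try to show $v(\fa_\bullet)=v(\fb_\bullet)$. Instead, writing $\gamma=v(\fa_\bullet)=\inf\{v(f)/w(f)\mid w(f)>0\}$, one has the pointwise inequality $\gamma^{-1}v\geq w$, hence $\gamma^{-1}v(\fb_\bullet)\geq w(\fb_\bullet)$; combined with $\gamma^{-1}(A(v)+v(\fq))\leq A(w)+w(\fq)$, which follows from $v$'s optimality for $\fa_\bullet$ tested against $w$, this forces $\frac{A(v)+v(\fq)}{v(\fb_\bullet)}\leq\frac{A(w)+w(\fq)}{w(\fb_\bullet)}$. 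Since $w$ already attains the infimum for $\fb_\bullet$, this must be an equality, so $v$ computes $\lct^\fq(\fb_\bullet)$. Your proposal is missing both the compactness step that produces $w$ and the one-sided comparison argument that replaces the unavailable equalities you were trying to arrange.
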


The key requirement in the above conjecture is that the valuation $v$ be
quasimonomial. The next proposition shows that if we drop this requirement, we can find 
a valuation computing the log canonical threshold. This is the analogue of the corresponding result for graded sequences that we have mentioned above. 

\begin{Prop}\label{existence_subadditive}
  Under the assumptions in Conjecture~E', there is a nonzero valuation $v$ on $R$ with
  $A(v)<\infty$ which computes $\lct^{\fq}(\fb_{\bullet})$. 
\end{Prop}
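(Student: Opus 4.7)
The plan is to mimic the proof of the analogous result for graded sequences (\cite[Theorem~7.3]{graded}) by producing $v$ as a cluster point of a minimizing sequence of divisorial valuations in a suitable compact space. The controlled growth hypothesis is what will allow us to overcome the fact that $v\mapsto v(\fb_\bullet)$ is only \emph{lower} semicontinuous (being a sup of continuous functionals), which would otherwise push the quotient $(A(v)+v(\fq))/v(\fb_\bullet)$ in the wrong direction when passing to the limit.

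First I would dispose of the trivial case $\lct^{\fq}(\fb_\bullet)=\infty$, in which every nonzero valuation with $A(v)<\infty$ computes the threshold. So assume $\lct^{\fq}(\fb_\bullet)<\infty$ and choose a minimizing sequence of divisorial valuations $(v_n)$ for the infimum in~\eqref{e106v2}. The hypothesis $\fm^{pj}\subseteq\fb_j$ gives $v(\fb_j)\le pj\,v(\fm)$ for every valuation $v$, hence $v(\fb_\bullet)\le p\,v(\fm)$; in particular, any valuation $v$ with $v(\fm)=0$ satisfies $v(\fb_\bullet)=0$ and so gives quotient $+\infty$. Consequently every $v_n$ is centered at $\fm$, and after rescaling we may assume $v_n(\fm)=1$. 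The same chain of inequalities (now applied to $v_n$) yields $v_n(\fb_\bullet)\ge p$, so the boundedness of the quotient forces $A(v_n)+v_n(\fq)$, and thus each of $A(v_n)$ and $v_n(\fq)$, to remain uniformly bounded.

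Next I would invoke the compactness of the space of valuations $v$ on $R$ with $v(\fm)=1$ and $A(v)\le M$ (a standard consequence of the fact that valuations are determined by their values on a countable dense family of elements and that bounded log discrepancy forces Izumi-type two-sided control $v(f)\lesssim\ord_\fm(f)$), which lets us extract a subsequential limit $v$ in the pointwise topology on $R\setminus\{0\}$. Then $v$ is a valuation with $v(\fm)=1$ (hence nonzero), $v(\fq)=\lim v_n(\fq)$ (continuity of $v\mapsto v(\fq)$), and $A(v)\le\liminf A(v_n)<\infty$ (lower semicontinuity of the log discrepancy, cf.~\cite[\S5]{graded}).

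The crucial step is to show $v(\fb_\bullet)=\lim v_n(\fb_\bullet)$, and here the controlled growth hypothesis enters. Since $A(v)<\infty$, the controlled growth inequality extends from quasimonomial valuations to $v$ (and we already have it for the divisorial $v_n$), yielding, for every $j\ge 1$,
\begin{equation*}
\frac{v(\fb_j)}{j}\le v(\fb_\bullet)\le \frac{v(\fb_j)}{j}+\frac{A(v)}{j},
\qquad
\frac{v_n(\fb_j)}{j}\le v_n(\fb_\bullet)\le \frac{v_n(\fb_j)}{j}+\frac{M}{j}.
\end{equation*}
Combining these with the continuity $v_n(\fb_j)\to v(\fb_j)$ for fixed $j$ and then letting $j\to\infty$ gives $\lim v_n(\fb_\bullet)=v(\fb_\bullet)$, which is necessarily $\ge p>0$. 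Taking the limit in the quotient then gives
\begin{equation*}
\frac{A(v)+v(\fq)}{v(\fb_\bullet)}\le\liminf_n\frac{A(v_n)+v_n(\fq)}{v_n(\fb_\bullet)}=\lct^{\fq}(\fb_\bullet),
\end{equation*}
and the reverse inequality is automatic from~\eqref{e106v2}. The main obstacle is really the compactness/Izumi step, which is needed to turn bounded log discrepancy into an honest convergent subsequence of valuations; this is where one has to be careful to use the existing machinery of~\cite{graded} (the argument is very close to that of \cite[Theorem~7.3]{graded}, only the ``limit interchange'' in the fourth step being new and controlled-growth-specific).
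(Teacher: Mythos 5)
Your approach is essentially the same as the paper's. Both arguments rest on the compactness of the space $V_M$ of valuations $v$ with $v(\fm)=1$ and $A(v)\le M$, together with continuity of $v\mapsto v(\fq)$ and (crucially, via controlled growth) of $v\mapsto v(\fb_\bullet)$ on $V_M$, and lower semicontinuity of $A$; the paper states this as lower semicontinuity of the quotient $(A(v)+v(\fq))/v(\fb_\bullet)$ on a compact space, while you unwind it into a minimizing sequence plus a limit-interchange, but these are two phrasings of the same compactness mechanism, and your explicit limit-interchange step is precisely what is packaged in the cited continuity result from~\cite{graded}.

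One slip to fix: the inclusion $\fm^{pj}\subseteq\fb_j$ gives $v(\fb_j)\le v(\fm^{pj})=pj\,v(\fm)$, hence $v_n(\fb_\bullet)\le p$, not $\ge p$ as you wrote in two places. The upper bound $v_n(\fb_\bullet)\le p$ is in fact exactly what you need to deduce that $A(v_n)+v_n(\fq)$ stays bounded (it is the eventually bounded quotient multiplied by a denominator $\le p$); and for the positivity $v(\fb_\bullet)>0$ at the end, argue instead that $A(v)\ge v(\fm)=1>0$, so the numerator is bounded below by $1$, while the quotient equals the finite number $\lct^{\fq}(\fb_\bullet)$, forcing $v(\fb_\bullet)>0$.
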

\begin{Remark}
  A similar result appears in~\cite{Hu12a}, see also~\cite{Hu12b}.
\end{Remark}
\begin{proof}
  The argument follows verbatim the proof of \cite[Thm~7.3]{graded}, 
  which treated the case of graded sequences.\footnote{The proof of
    \cite[Thm~7.3]{graded} involves an extra step, the reduction to the case when all ideals are $\fm$-primary, for some maximal ideal $\fm$ in $R$; in our case,
    we do not have to worry about this step since this is part of the hypothesis.}
  If $\lct^{\fq}(\fb_{\bullet})=\infty$, then the assertion is
  trivial: we may take $v$ to be any quasimonomial valuation such that
  $v(\fm)=0$, since in this case $v(\fb_{\bullet})=0$.
  Hence, from now on, we may assume that $\lct^{\fq}(\fb_{\bullet})<\infty$. 
  
  By the assumption on $\fb_{\bullet}$, if $v(\fm)=0$, then $v(\fb_{\bullet})=0$. 
  Therefore we only need to focus on valuations $v$ with
  $v(\fm)>0$, and, after normalizing,
  we may assume that $v(\fm)=1$. Let us fix $\epsilon$ with
  $0<\epsilon<\Arn^{\fq}(\fb_{\bullet})$ and suppose that 
  $\frac{v(\fb_{\bullet})}{A(v)+v(\fq)}>\epsilon$. 
  For every $j\geq 1$ we have $\fm^{pj}\subseteq \fb_j$,
  hence $v(\fb_j)\leq jp$, and therefore $v(\fb_{\bullet})\leq p$. This implies that
  $A(v)\leq A(v)+v(\fq)\leq M$, where $M=p/\epsilon$. We thus have
  \begin{equation*}
    \lct^{\fq}(\fb_{\bullet})=\inf_{v\in V_M}\frac{A(v)+v(\fq)}{v(\fb_{\bullet})},
  \end{equation*}
  where $V_M$ is the set of all valuations $v$ with $v(\fm)=1$ and $A(v)\leq M$. 
  
  The space of all valuations carries a natural topology, and the subspace $V_M$
  is compact by \cite[Proposition~5.9]{graded}. 
  Moreover, $A$ is a lower semicontinuous function on $V_M$, while the functions 
  $v\mapsto v(\fq)$ and $v\mapsto v(\fb_{\bullet})$ are continuous on $V_M$ 
  by~\cite[Proposition~5.7,~Corollary~6.6]{graded} (for the last assertion we make use 
  of the hypothesis that $\fb_{\bullet}$ has controlled growth). 
  It follows that the function $v\mapsto\frac{A(v)+v(\fq)}{v(\fb_{\bullet})}$ is
  lower semicontinuous on $V_M$; hence it
  achieves its infimum at some point $v\in V_M$.
\end{proof}
%
%
\subsubsection{Equivalence of conjectures}
Our main result is that Conjecture~C' implies the openness conjecture, 
see Theorem~D' in~\S\ref{S115}. 
As a first step, we show that Conjectures~C',~C'' and E' are equivalent.
\begin{Prop}\label{P101}
  If one of Conjectures~C',C'', and E' holds for all rings of
  dimension $\leq n$,  then the other two conjectures hold for such rings.
\end{Prop}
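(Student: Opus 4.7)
The equivalence $\mathrm{C}'\Leftrightarrow\mathrm{C}''$ is \cite[Theorem~7.6]{graded}, so it is enough to close the triangle by proving $\mathrm{E}'\Rightarrow\mathrm{C}''$ and $\mathrm{C}'\Rightarrow\mathrm{E}'$. In both directions the argument stays within rings of dimension $\le n$.

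For $\mathrm{E}'\Rightarrow\mathrm{C}''$, let $\fa_\bullet$ be a graded sequence in $R=k[x_1,\dots,x_m]$ with $\fa_1\supseteq\fm^p$, together with a nonzero ideal $\fq$, and pass to the asymptotic multiplier ideals $\fb_j:=\cJ(\fa_\bullet^j)$. By the results recalled in~\S\ref{S111}, $\fb_\bullet$ is subadditive of controlled growth with $\lct^{\fq}(\fa_\bullet)=\lct^{\fq}(\fb_\bullet)$ and $v(\fa_\bullet)=v(\fb_\bullet)$ for every valuation $v$ with $A(v)<\infty$. The chain
\begin{equation*}
\fm^{pj}\;\subseteq\;\fa_1^j\;\subseteq\;\fa_j\;\subseteq\;\cJ(\fa_j)\;\subseteq\;\cJ(\fa_\bullet^j)\;=\;\fb_j
\end{equation*}
(in which the containment $\fa_j\subseteq\cJ(\fa_j)$ comes from the effectivity of the relative canonical divisor on a log resolution) shows that $\fb_\bullet$ satisfies the hypotheses of~$\mathrm{E}'$. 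Applying~$\mathrm{E}'$ produces a quasimonomial $v$ computing $\lct^{\fq}(\fb_\bullet)$; the identity $v(\fa_\bullet)=v(\fb_\bullet)$ then forces $v$ to compute $\lct^{\fq}(\fa_\bullet)$ as well.

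For $\mathrm{C}'\Rightarrow\mathrm{E}'$, let $\fb_\bullet$ and $\fq$ be as in~$\mathrm{E}'$; we may assume $\lct^{\fq}(\fb_\bullet)<\infty$, since otherwise any quasimonomial valuation with $v(\fm)=0$ computes it, as noted in the proof of Proposition~\ref{existence_subadditive}. By that Proposition we may pick a valuation $v$ with $A(v)<\infty$ computing $\lct^{\fq}(\fb_\bullet)$, so that $\alpha:=v(\fb_\bullet)>0$. Form the graded sequence of $v$-valuation ideals $\fa_j:=\{f\in R\mid v(f)\ge j\}$, and use Conjecture~$\mathrm{C}'$ to choose a quasimonomial $w$ computing $\lct^{\fq}(\fa_\bullet)$. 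By construction $\fb_j\subseteq\fa_{\lfloor v(\fb_j)\rfloor}$, so the elementary inequality $w(\fa_m)\ge m\cdot w(\fa_\bullet)$ (valid for graded sequences) yields
\begin{equation*}
w(\fb_j)\;\ge\;w\bigl(\fa_{\lfloor v(\fb_j)\rfloor}\bigr)\;\ge\;\lfloor v(\fb_j)\rfloor\cdot w(\fa_\bullet);
\end{equation*}
dividing by $j$ and letting $j\to\infty$ gives the crucial asymptotic estimate $w(\fb_\bullet)\ge\alpha\cdot w(\fa_\bullet)$. Together with the trivial bound $v(\fa_\bullet)\ge 1$ coming from $v(\fa_j)\ge j$, this produces the chain
\begin{equation*}
\frac{A(w)+w(\fq)}{w(\fb_\bullet)}\;\le\;\frac{\lct^{\fq}(\fa_\bullet)}{\alpha}\;\le\;\frac{A(v)+v(\fq)}{\alpha\cdot v(\fa_\bullet)}\;\le\;\frac{A(v)+v(\fq)}{v(\fb_\bullet)}\;=\;\lct^{\fq}(\fb_\bullet),
\end{equation*}
which, combined with the reverse inequality built into the definition of $\lct^{\fq}$, shows that $w$ computes $\lct^{\fq}(\fb_\bullet)$. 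The only nontrivial step is the containment $\fb_j\subseteq\fa_{\lfloor v(\fb_j)\rfloor}$ and its upgrade to $w(\fb_\bullet)\ge v(\fb_\bullet)\cdot w(\fa_\bullet)$; everything else is a direct manipulation of the definitions and the Fekete-type properties of \S\ref{S1102}--\S\ref{S111}.
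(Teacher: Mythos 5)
Your proof is correct and follows the same overall strategy as the paper's: quote \cite[Theorem~7.6]{graded} for $\mathrm{C}'\Leftrightarrow\mathrm{C}''$, prove $\mathrm{E}'\Rightarrow\mathrm{C}''$ by passing to asymptotic multiplier ideals, and prove $\mathrm{C}'\Rightarrow\mathrm{E}'$ by first invoking Proposition~\ref{existence_subadditive} and then feeding the valuation ideals $\fa_j=\{v\ge j\}$ into $\mathrm{C}'$. The one place where your argument genuinely diverges is in how you establish the crucial comparison $w(\fb_\bullet)\ge v(\fb_\bullet)\cdot w(\fa_\bullet)$. The paper obtains it by invoking \cite[Lemma~2.4]{graded}, which identifies $w(\fa_\bullet)$ with $\inf\{w(f)/v(f)\mid v(f)>0\}$, and then reads off a valuation-level inequality $w\ge w(\fa_\bullet)\cdot v$. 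You instead deduce it directly from the elementary containment $\fb_j\subseteq\fa_{\lfloor v(\fb_j)\rfloor}$ and Fekete asymptotics, which is more self-contained and avoids that lemma. The paper's route has the slight advantage of producing $w(\fa_\bullet)=1$ (for the defining valuation) and the clean statement $\gamma^{-1}v\ge w$, but for the purposes of this proposition your weaker bound $v(\fa_\bullet)\ge 1$ suffices, as you correctly note. Both approaches close the chain of inequalities in the same way, so this is best described as a small but real simplification at the level of lemmas cited, not a different proof.
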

\begin{proof}
  As we have already mentioned, \cite[Theorem~7.6]{graded} gives the equivalence of 
  Conjectures C' and C''. On the other hand, it is easy to see that if Conjecture~E'
  holds in dimension $\leq n$, then so does Conjecture~C''. Indeed, let $\fq$ and
  $\fa_{\bullet}$
  be as in Conjecture C'', and let $\fb_j=\cJ(\fa_{\bullet}^j)$. In this case $\fb_{\bullet}$
  is a subadditive sequence of controlled growth, and for every $j\geq 1$ we have
  \begin{equation*}
    \fm^{pj}\subseteq \fa_1^j\subseteq\fa_j\subseteq\fb_j.
  \end{equation*}
  Furthermore, if $v$ is a quasimonomial valuation of $R$ which computes
  $\lct^{\fq}(\fb_{\bullet})$, then, since $v(\fa_{\bullet})=v(\fb_{\bullet})$ and
  $\lct^{\fq}(\fa_{\bullet})=\lct^{\fq}(\fb_{\bullet})$, it follows that $v$ computes
  $\lct^{\fq}(\fa_{\bullet})$. Therefore Conjecture~C'' holds in dimension $\leq n$. 
  
  We now assume that Conjecture~C' holds in dimension $\leq n$, and consider 
  a nonzero $\fq$ and a subadditive sequence $\fb_{\bullet}$ as in Conjecture~E',
  with $\dim(R)\leq n$. We may assume that $\lct^{\fq}(\fb_{\bullet})<\infty$, since otherwise the
  assertion to be proved is trivial (note also that $\lct^{\fq}(\fb_{\bullet})>0$ since
  $\fb_{\bullet}$ has controlled growth).
  It follows from Proposition~\ref{existence_subadditive} that 
  there is a nonzero valuation $w$ of $R$ with $A(w)<\infty$ which computes
  $\lct^{\fq}(\fb_{\bullet})$. In particular, $w(\fb_{\bullet})$ is finite and positive.
  If we put $\fa_j=\{f\in R\mid w(f)\geq j\}$, then $\fa_{\bullet}$ 
  is a graded sequence of ideals, and by
  Conjecture~C' there is a quasimonomial valuation $v$ on $R$ which computes
  $\lct^{\fq}(\fa_{\bullet})$. It is enough to show that in this case $v$ also computes
  $\lct^{\fq}(\fb_{\bullet})$. 

  It follows easily from the definition of $\fa_{\bullet}$ that 
  $w(\fa_{\bullet})=1$ and if $v(\fa_{\bullet})=\gamma$, then
  \begin{equation*}
    \gamma=\inf\{v(f)/w(f)\mid f\in R, w(f)>0\}
  \end{equation*}
  (see for example \cite[Lemma~2.4]{graded}).
  We first deduce that $\lct^{\fq}(\fa_{\bullet})\leq A(w)+w(\fq)<\infty$, hence
  $\gamma>0$. Furthermore, we have $\gamma^{-1}v\geq w$.
  Since $v$ computes $\lct^{\fq}(\fa_{\bullet})$, we have
  \begin{equation}\label{eq2_P101}
    \gamma^{-1}(A(v)+v(\fq))=\frac{A(v)+v(\fq)}{v(\fa_{\bullet})}
    \leq \frac{A(w)+w(\fq)}{w(\fa_{\bullet})}=A(w)+w(\fq).
  \end{equation}
  On the other hand, using the fact that $\gamma^{-1}v\geq w$, we obtain 
  $\gamma^{-1}v(\fb_{\bullet})\geq w(\fb_{\bullet})$, and therefore (\ref{eq2_P101})
  gives
  \begin{equation}\label{eq3_P101}
    \frac{A(v)+v(\fq)}{v(\fb_{\bullet})}\leq \frac{A(w)+w(\fq)}{w(\fb_{\bullet})}.
  \end{equation}
  By assumption, $w$ computes $\lct^{\fq}(\fb_{\bullet})$, hence
  we have equality in (\ref{eq3_P101}), and $v$ also computes 
  $\lct^{\fq}(\fb_{\bullet})$.
\end{proof}
\begin{Remark}\label{special_equivalence}
  By running the argument in the proof of Proposition~\ref{P101} with $\fq=R$, we see that 
  Conjectures~C and E in the Introduction are equivalent, in the sense that one holds for
  rings of dimension $\leq n$ if and only if the other one does.
\end{Remark}
%
%
\subsubsection{A converse to the conjectures}
As a partial converse to the (equivalent) conjectures~C',~C'' and~E' 
above we show
that any quasimonomial valuation computes \emph{some} jumping number.
This result will not be used in the sequel. In its formulation and
proof we freely use terminology from~\cite{graded}.
\begin{Prop}
  Let $X$ be an excellent, regular, connected, separated scheme over $\Q$
  and let $v$ be a quasimonomial valuation on $X$. 
  Then there exists a nonzero ideal $\fq$ on $X$ and a graded sequence 
  $\fa_\bullet$ on $X$ such that $v$ computes $\lct^\fq(\fa_\bullet)$.
\end{Prop}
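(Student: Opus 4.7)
The plan is to take $\fa_\bullet$ to be the sequence of $v$-valuation ideals, $\fa_j := \{f \in \cO_X \mid v(f) \ge j\}$ (defined Zariski-locally near the center of $v$), and to take $\fq$ to be the unit ideal $\cO_X$. Then $\fa_\bullet$ is a graded sequence (since $v(fg) = v(f)+v(g)$) with $v(\fa_\bullet) = 1$, so substituting $w = v$ in formula~\eqref{inf_lct} already yields the upper bound $\lct(\fa_\bullet) \le A(v)$.

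For the matching lower bound, I need $A(w)/w(\fa_\bullet) \ge A(v)$ for every valuation $w$ with $A(w) < \infty$. By~\cite[Lemma~2.4]{graded} we have $w(\fa_\bullet) = \inf\{w(f)/v(f) : f \in \cO_X,\ v(f) > 0\}$. If the center of $w$ on $X$ does not contain $\Ce(v)$, then some $f \in I_{\Ce(v)}$ satisfies $w(f) = 0 < v(f)$, giving $w(\fa_\bullet) = 0$ and an infinite ratio, so such $w$ are harmless. By Proposition~\ref{existence_subadditive} together with compactness arguments in valuation space (analogous to those in its proof), I may therefore restrict attention to quasimonomial $w$ with $\Ce(w) \subseteq \Ce(v)$ that are defined on a common projective birational model $\pi\colon Y \to X$ adapted simultaneously to $v$ and $w$.

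On such a $Y$, after further refinement if needed, $v$ is monomial at a stratum $Z = \bigcap_{j\in J} D_j$ with positive rationally independent weights $(\alpha_j)_{j\in J}$, and $w$ is monomial at a stratum $Z' = \bigcap_{j\in J'}D_j$ with weights $(\beta_j)_{j\in J'}$ satisfying $J\subseteq J'$. Setting $r := \min_{j\in J}\beta_j/\alpha_j$, an explicit monomial computation at the generic points $\eta_Z$ and $\eta_{Z'}$ identifies $w(\fa_\bullet) = r$: the ratio $w(u^\gamma)/v(u^\gamma) = \sum_{j\in J'}\beta_j\gamma_j\,\big/\,\sum_{j\in J}\alpha_j\gamma_j$ over multi-indices $\gamma$ with $\gamma|_J \neq 0$ is bounded below by $r$ and approached by $\gamma = e_{j_0}$ where $j_0 \in J$ realizes the minimum. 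Writing $k_j := 1 + \ord_{D_j}(K_{Y/X}) > 0$, the bound then follows from
\begin{equation*}
  A(w) \;=\; \sum_{j\in J'}\beta_j k_j \;\ge\; \sum_{j\in J}\beta_j k_j \;\ge\; r \sum_{j\in J}\alpha_j k_j \;=\; r \cdot A(v) \;=\; w(\fa_\bullet)\cdot A(v),
\end{equation*}
with equality forcing $J' = J$ and $\beta_j = r\alpha_j$ for all $j$, i.e.\ $w \propto v$.

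The main obstacle will be realizing the infimum $r$ by actual elements of $\cO_X$ (rather than by formal monomials in the completion at $\eta_{Z'}$) when the extremal index $j_0$ corresponds to an exceptional divisor $D_{j_0}$ that is not the strict transform of a principal divisor on $X$. I would handle this by an approximation argument: produce a sequence $f_n \in \cO_X$ with $v(f_n) \to \infty$ and $w(f_n)/v(f_n) \to r$, exploiting the rational independence of $(\alpha_j)_{j\in J}$ and the fact that $v$-values of sections from $\cO_X$ are dense in the appropriate value group. If any subtlety persists, I would fall back on $\fq = I_{\Ce(v)}$, whose $w$-value contributes an extra term $w(\fq) \ge r\cdot v(\fq)$ by the same monomial comparison and thus provides the needed slack.
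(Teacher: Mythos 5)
The proposal has a genuine gap, and it is concentrated exactly where you flag it: the identification $w(\fa_\bullet) = r = \min_{j\in J}\beta_j/\alpha_j$ is simply \emph{false} in general, and the ``approximation argument'' you hope will produce $f_n\in\cO_X$ with $w(f_n)/v(f_n)\to r$ cannot succeed because the infimum $\inf_{f\in\cO_X, v(f)>0} w(f)/v(f)$ is genuinely \emph{strictly larger} than $r$. The local coordinates $u_j$ on $Y$ at the generic point of $Z'$ live in $\cO_{Y,\eta_{Z'}}$, not in $\cO_X$; writing $u_j=a_j/b_j$ with $a_j,b_j\in\cO_X$, the monomial $u^{e_{j_0}}$ is a \emph{fraction} on $X$, and any $f\in\cO_X$ that one might hope approximates it carries extra vanishing along the other strata. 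Concretely: take $X=\A^3$, $v$ the monomial valuation with $v(x)=2$, $v(y)=1$, $v(z)=0$ (so $v=\ord_{E_2}$ on a suitable two-step blowup of the $z$-axis, $A(v)=3$, $J=\{E_2\}$, $\alpha_{E_2}=1$), and let $w$ be monomial at $\tilde E_1\cap E_2$ with weights $(\beta_1,\beta_2)$. Then $r=\beta_2$, but a direct computation gives $w(\fa_\bullet)=\beta_2+\tfrac12\beta_1>r$, because every $f\in\cO_X$ with $v(f)>0$ must vanish to positive order along $\tilde E_1$ as well (the best ratio of orders being $1/2$, achieved by powers of $x$). Your chain $A(w)\ge r\cdot A(v)$ therefore does \emph{not} imply the needed inequality $A(w)\ge w(\fa_\bullet)\cdot A(v)$, and the proof does not close. (There is also a minor slip earlier: ``the center of $w$ does not contain $\Ce(v)$'' is not the correct condition for finding $f$ with $w(f)=0<v(f)$; one wants $\Ce(w)\not\subseteq\Ce(v)$, not $\Ce(w)\not\supseteq\Ce(v)$.)

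The paper's proof avoids the computation of $w(\fa_\bullet)$ entirely. It invokes \cite[Thm~7.8]{graded}, which reduces the proposition to producing a nonzero ideal $\fq$ such that $A(w)+w(\fq)\ge A(v)+v(\fq)$ holds for every valuation $w$ dominating $v$ (i.e.\ $w(\fa)\ge v(\fa)$ for all ideals $\fa$). The choice of $\fq$ there is \emph{not} $\cO_X$, nor $I_{\Ce(v)}$, but $\fq=(b_1\cdots b_n)^N$, where $y_i=a_i/b_i$ are the algebraic local coordinates on a model $Y$ in which $v$ is monomial and $N\ge\max_i A(\ord_{E_i})$. The key point is that $w\ge v$ forces $w(a_i)\ge v(a_i)$ and $w(b_i)\ge v(b_i)$, but because $v(b_i)=0$ while $w(b_i)$ may be positive, one can have $w(y_i)<v(y_i)$; the ideal $\fq$ exactly compensates this deficit in the inequality $A(w)\ge\sum_i w(y_i)A(\ord_{E_i})$. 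Your fallback $\fq=I_{\Ce(v)}$ ignores this mechanism: the obstruction is governed by the \emph{denominators} $b_i$ of the local coordinates on $Y$, not by the center of $v$ on $X$, and there is no reason a power of $I_{\Ce(v)}$ controls $w(b_i)-v(b_i)$. If you want to salvage the argument, the two things to change are: (1) replace the direct estimate of $w(\fa_\bullet)$ by the criterion from \cite[Thm~7.8]{graded}, and (2) build $\fq$ from the denominators of the coordinates on the model $Y$, with exponent at least $\max_i A(\ord_{E_i})$.
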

\begin{proof}
  By~\cite[Thm~7.8]{graded} it suffices to find a nonzero ideal 
  $\fq$ such that
  the following statement holds: for every valuation $w\in\Val_X$ such
  that $w\ge v$ (in the sense that $w(\fa)\ge v(\fa)$ for all ideals
  $\fa$ on $X$) we have $A(w)+w(\fq)\ge A(v)+v(\fq)$.
  Here $A=A_X$ is the log discrepancy with respect to $X$.

  After replacing $X$ by an open neighborhood of the center 
  $c_X(v)$ of $v$ on $X$ we may assume that $X=\Spec R$ is 
  affine. 
  Since $v$ is quasimonomial, there exists a proper birational 
  morphism $\pi\colon Y\to X$, with $Y$ regular,
  and algebraic local coordinates
  $y_1,\dots,y_n$ at $c_Y(v)$ with respect to which $v$ is monomial.
  Let $E_i=\{y_i=0\}$, $1\le i\le n$, be the associated prime divisors
  on $Y$ and pick $N$ large enough so that $N\ge A(\ord_{E_i})$ for all
  $i$.  Also write $y_i=a_i/b_i$ with $a_i,b_i\in R$ nonzero.

  We claim that the principal ideal $\fq=(b_1\cdot\ldots\cdot b_n)^N$ 
  does the job.
  Indeed, suppose $w\in\Val_X$ satisfies $w\ge v$.
  In particular, we then have
  \begin{equation}\label{e129}
    w(a_i)\ge v(a_i)
    \quad\text{and}\quad
    w(b_i)\ge v(b_i)
    \quad\text{for all $i$}.
  \end{equation}
  Since $v$ is monomial in coordinates $y_1,\dots,y_n$ we have 
  \begin{equation}\label{e130}
    A(v)=\sum_{i=1}^nv(y_i)A(\ord_{E_i}).
  \end{equation}
  By the definition of $A(w)$ we also have 
  \begin{equation}\label{e131}
    A(w)\ge\sum_{i=1}^nw(y_i)A(\ord_{E_i}).
  \end{equation}
 Equations~\eqref{e130} and~\eqref{e131} and the definition of $\fq$
 now imply
 \begin{multline*}
   A(w)-A(v)+w(\fq)-v(\fq)
   \ge\sum_{i=1}^nA(\ord_{E_i})(w(y_i)-v(y_i))
   +\sum_{i=1}^nN(w(b_i)-v(b_i))\\
   =\sum_{i=1}^nA(\ord_{E_i})(w(a_i)-v(a_i))
   +\sum_{i=1}^n(N-A(\ord_{E_i}))(w(b_i)-v(b_i))
   \ge0,
 \end{multline*}
 where the last inequality follows from~\eqref{e129} and the choice of
 $N$.
 This completes the proof.
\end{proof}
\begin{Remark}
  It follows from~\cite[Thm~7.8]{graded} that 
  with the choice of $\fq$ above, $v$ also computes
  $\lct^\fq(\fb_\bullet)$ for some subadditive sequence 
  $\fb_\bullet$ as well as $\lct^\fq(\fa'_\bullet)$, where
  $\fa'_\bullet$ is the graded sequence defined by 
  $\fa'_j=\{v\ge j\}$ for $j\ge 1$.
\end{Remark} 
%
%
%
\subsection{Analytic setting}\label{S105}
Let $U$ be a complex manifold. 
When talking about open sets in $U$ we always refer
to the classical topology unless mentioned otherwise.
By an \emph{ideal} on $U$ we will always 
mean a coherent analytic ideal sheaf on $U$. 
For a point $x\in U$, $\cO_x$ denotes the
ring of germs of holomorphic functions at $x$.
Note that $\cO_{x}$ is isomorphic to the ring of convergent power series
in $n$ variables over ${\mathbf C}$, where $n=\dim(U)$; hence $\cO_x$ is an 
excellent regular local ring, see~\cite[Thm~102]{Matsumura}.

We denote by $\fm_x$ the maximal ideal in $\cO_x$.
By a \emph{valuation} at $x$ we mean a valuation on $\cO_x$
in the sense of~\S\ref{S110}.

A \emph{subadditive sequence} of ideals on $U$ is a sequence
$\fb_\bullet=(\fb_j)_{j=1}^\infty$ of everywhere nonzero ideals on $U$ such that 
$\fb_i\cdot\fb_j\supseteq\fb_{i+j}$. 
If $x\in U$, then we write 
$\fb_\bullet\cdot\cO_x$ for the corresponding subadditive sequence
inside $\cO_x$. 
We say that $\fb_\bullet$ has \emph{controlled growth} if
$\fb_\bullet\cdot\cO_x$ has controlled growth for all $x\in U$.

If $\fq$ is an everywhere nonzero ideal on $U$, $\fb_\bullet$ is a subadditive sequence of
ideals on $U$ and $x\in U$, then we define 
$\lct_x^\fq(\fb_\bullet):=\lct^{\fq\cdot\cO_x}(\fb_\bullet\cdot\cO_x)$ and 
$\Arn_x^\fq(\fb_\bullet):=\lct^\fq_x(\fb_\bullet)^{-1}$. Thus we have
\begin{equation*}
  \Arn_x^\fq(\fb_\bullet)=\sup_v\frac{v(\fb_\bullet)}{A(v)+v(\fq)},
\end{equation*}
where the supremum is over all (quasimonomial) valuations at $x$
(note that we simply write $v(\fb_{\bullet})$ and $v(\fq)$ for
$v(\fb_{\bullet}\cdot\cO_x)$ and $v(\fq\cdot\cO_x)$, respectively).

More generally, we shall consider the following situation.
Let $V$ be a germ of a complex submanifold at a point $x$ in 
a complex manifold.
Let $\cO_{x,V}$ be the localization of $\cO_x$ along the ideal $I_V$.
This is an excellent regular local ring with maximal ideal $\fm_{x,V}$.
Consider a subadditive system of ideals
$\fb_\bullet$ defined near $x$ and a nonzero ideal $\fq\subseteq\cO_x$.
We set
\begin{equation*}
  \lct^\fq_{x,V}(\fb_\bullet):=\lct^{\fq\cdot\cO_{x,V}}(\fb_\bullet\cdot\cO_{x,V})
  \quad\text{and}\quad
  \Arn^\fq_{x,V}(\fb_\bullet):=\lct^\fq_{x,V}(\fb_\bullet)^{-1}
\end{equation*}
Then
\begin{equation*}
  \Arn_{x,V}^\fq(\fb_\bullet)=\sup_v\frac{v(\fb_\bullet)}{A(v)+v(\fq)},
\end{equation*}
where the supremum is over all (quasimonomial) valuations 
of $\cO_{x,V}$.

Note that if $V=\{x\}$, then we recover the previous situation.
%
%
\subsubsection{Analytification of birational morphisms}\label{S108}
Let $x$, $V$ be as above.
Consider a projective birational morphism
$\pi\colon X\to\Spec\cO_{x,V}$ of schemes over $\C$, 
with $X$ regular.
We can then analytify $\pi$ as follows.\footnote{The analytification procedure here is ad hoc and not functorial, but nevertheless related to the construction of a complex manifold associated to a smooth complex projective variety.}
Since $\pi$ is projective, there exists a closed embedding
\begin{equation}\label{e105}
  X\hookrightarrow \Spec\cO_{x,V}\times_{\Spec\C}\P^N_\C
\end{equation}
such that $\pi$ is the restriction of the projection of
the right hand side onto $\Spec\cO_{x,V}$. Thus $X$ is 
cut out by finitely many homogeneous equations 
with coefficients in $\cO_{x,V}$. These coefficients can 
be written as $f_i/g$, where $f_i\in\cO_x$ 
and $g\in\cO_x\setminus I_V\cdot\cO_x$.
Let $U$ be a neighborhood of $x$ on which 
$g$ and the $f_i$ are defined. Set $W:=\{g=0\}\subseteq U$. 
This is a (possibly empty)
analytic subset of $U$ that does not contain $V$.
After shrinking $U$ we may assume that 
$W$ is either empty or contains $x$.

We now define a complex manifold 
\begin{equation*}
  \Xan\hookrightarrow (U\setminus W)\times\P^N(\C)
\end{equation*}
as the analytic subset cut out by the same equations as 
in~\eqref{e105}. Then $\Xan$ is a complex manifold and 
the induced projection $\pian\colon\Xan\to U\setminus W$ is 
a proper modification. 
We shall have more to say about this construction later.

Given a point $y\in U\setminus W$ we define in the same way a 
projective birational morphism $\pi_y\colon X_y\to\Spec\cO_y$.
Namely, $X_y\subseteq\Spec\cO_y\times_{\Spec\C}\P^N_\C$ 
is defined by the same homogeneous polynomials as above.
After shrinking $U$ and increasing $W$ 
(but keeping $x\in U$ and $W\not\supseteq V$) 
we may further obtain that if $E\subseteq\pi_y^{-1}(V)$ is any 
prime divisor, then the image $\pi_y(E)$ contains $V$.
Further, suppose $\fb$ is an ideal on $U$ and that 
$\pi\colon X\to\Spec\cO_{x,V}$ as above is a log resolution of $\fb\cdot\cO_{x,V}$. 
Then, we may assume that the birational morphism
$\pi_y\colon X_y\to\Spec\cO_y$ is a log resolution of $\fb\cdot\cO_y$
for all $y\in U\setminus W$. 
Further, there is a bijection between the set of prime divisors $E$ of $X_y$
for which $\ord_E(\fb\cdot\cO_y)>0$ and the set of prime divisors 
$E$ of $X$ such that 
$\ord_E(\fb\cdot\cO_{x,V})>0$.
%
%
\subsubsection{Log canonical locus}\label{S109}
The key to the proof of Theorem~D is to localize at the locus where 
the log canonical threshold is as small as possible.
Let $x$ and $V$ be as above and let
$\fb_\bullet$ (resp.\ $\fq$) be a subadditive
system of ideals (resp.\ a nonzero ideal) defined on some neighborhood $U$ of $x$.
Assume that $U$ is small enough that $V$ is a submanifold of $U$.
\begin{Lemma}\label{L101}
  Assume that $\fb_\bullet$ has controlled growth
  and that
  \begin{equation*}
    \lct^\fq_y(\fb_\bullet)\le\la=\lct^q_x(\fb_\bullet)
  \end{equation*}
  for all $y\in V$, where $\la\ge0$. 
  Then $\lct^\fq_{x,V}(\fb_\bullet)=\la$.
\end{Lemma}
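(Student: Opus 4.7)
The plan is to prove the two inequalities $\lct^\fq_{x,V}(\fb_\bullet)\ge\la$ and $\lct^\fq_{x,V}(\fb_\bullet)\le\la$ separately.

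The inequality $\lct^\fq_{x,V}(\fb_\bullet)\ge\la$ is essentially formal. Valuations on $\cO_{x,V}$ correspond, by restriction, to valuations on $\cO_x$ whose center is contained in the prime $I_V$, and this correspondence preserves log discrepancies and the values on ideals coming from $\cO_x$ (here the equality of the two notions of log discrepancy reduces to the fact that the relative canonical divisor localizes well). The supremum defining $\Arn^\fq_{x,V}(\fb_\bullet)$ is therefore taken over a subclass of the valuations in $\Arn^\fq_x(\fb_\bullet)$, yielding $\lct^\fq_{x,V}(\fb_\bullet)\ge\lct^\fq_x(\fb_\bullet)=\la$.

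For the reverse inequality $\lct^\fq_{x,V}(\fb_\bullet)\le\la$, the plan is to reduce the pointwise hypothesis on $\fb_\bullet$ to a uniform statement on a single ideal $\fb_{j_0}$. Fix $\epsilon>0$ and set
\begin{equation*}
  V_j^\epsilon:=\{y\in V:j\lct^\fq_y(\fb_j)\le\la+\epsilon\}.
\end{equation*}
On a log resolution of $\fb_j\cdot\fq$ in a neighborhood of $x$, the value $\lct^\fq_y(\fb_j)$ is the minimum of finitely many ratios $(A(E_i)+\ord_{E_i}(\fq))/\ord_{E_i}(\fb_j)$ taken over the prime divisors $E_i$ whose image contains $y$; consequently $V_j^\epsilon$ is a closed analytic subset of $V$, being the intersection with $V$ of the union of the (analytic) images $\pi(E_i)$ of those $E_i$ satisfying the requisite numerical bound. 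The hypothesis $\lct^\fq_y(\fb_\bullet)=\inf_j j\lct^\fq_y(\fb_j)\le\la$ for every $y\in V$ gives $V=\bigcup_{j\ge 1}V_j^\epsilon$. Since $V$ is the germ of an irreducible smooth submanifold, any proper analytic subset of $V$ has Lebesgue measure zero, so a countable union of such subsets cannot exhaust $V$; hence $V_{j_0}^\epsilon=V$ for some $j_0$.

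With this uniform $j_0$, I run the classical pointwise-to-generic argument for the single ideal $\fb_{j_0}$. Let $T$ be the finite set of indices $i$ with $(A(E_i)+\ord_{E_i}(\fq))/\ord_{E_i}(\fb_{j_0})\le(\la+\epsilon)/j_0$; the condition $V=V_{j_0}^\epsilon$ gives $V\subseteq\bigcup_{i\in T}\pi(E_i)$, and irreducibility of $V$ together with finiteness of $T$ forces $V\subseteq\pi(E_{i_0})$ for some $i_0\in T$. Via the analytification procedure of \S\ref{S108} (run in reverse), the divisor $E_{i_0}$ yields a divisorial, in particular quasimonomial, valuation $v$ on $\cO_{x,V}$ with $A(v)=A(E_{i_0})$, $v(\fq)=\ord_{E_{i_0}}(\fq)$ and $v(\fb_{j_0})=\ord_{E_{i_0}}(\fb_{j_0})$, hence $\lct^\fq_{x,V}(\fb_{j_0})\le(\la+\epsilon)/j_0$ and $\lct^\fq_{x,V}(\fb_\bullet)\le j_0\lct^\fq_{x,V}(\fb_{j_0})\le\la+\epsilon$. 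Letting $\epsilon\to 0$ concludes the proof.

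The step I expect to be the most delicate is verifying that each $V_j^\epsilon$ is a genuine analytic subset of $V$, not merely closed in the Euclidean topology — it is this analyticity, combined with the irreducibility of $V$, that promotes ``$V_{j_0}^\epsilon$ has positive measure in $V$'' to ``$V_{j_0}^\epsilon=V$''. This forces me to work on a log resolution of the individual ideals $\fb_j$ rather than reasoning directly with the subadditive invariant $\lct^\fq_y(\fb_\bullet)$, which a priori is only lower semicontinuous in $y$.
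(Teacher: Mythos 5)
Your argument is correct, and the two halves follow fairly different routes from the paper's. For $\lct^\fq_{x,V}(\fb_\bullet)\ge\la$, the paper also argues via a single log resolution of $\fq\fb_m\cdot\cO_x$, observing that after base change the maximum defining $\Arn^\fq_{x,V}(\fb_m)$ runs over a subset of the divisors defining $\Arn^\fq_x(\fb_m)$; your phrasing in terms of valuations and their centers is essentially a restatement of this and is fine (one should note in passing that the log discrepancy is indeed preserved under the restriction $\cO_x\to\cO_{x,V}$, which is the content of the relative canonical divisor localizing well, as you say).

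The interesting divergence is in the reverse inequality. The paper's argument runs the analytification of \S\ref{S108} ``forward'': a log resolution over $\Spec\cO_{x,V}$ is spread out over a punctured neighborhood of $V$, giving the key identity $\Arn^\fq_y(\fb_m)=\Arn^\fq_{x,V}(\fb_m)$ for $y$ in a Zariski-open subset of $V$; then a chain of inequalities, crucially using the controlled-growth bound $v(\fb_\bullet)\le\frac1m(v(\fb_m)+A(v))$ for a nearly-optimal quasimonomial $v$ at such a $y$, squeezes $\Arn^\fq_y(\fb_\bullet)$ between $\la^{-1}$ and $\Arn^\fq_{x,V}(\fb_\bullet)$ up to $O(1/m+1/j)$. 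Your argument instead avoids controlled growth entirely: it exploits the analyticity of the sets $V_j^\epsilon$ (which you correctly identify as the delicate point, since the lower semicontinuous locus $\{\lct^\fq_\bullet(\fb_\bullet)\le\la\}$ is a priori only closed, not analytic) together with the measure-zero property of proper analytic subsets of the irreducible $V$, to find a single $j_0$ and a single divisor $E_{i_0}$ with $V\subseteq\pi(E_{i_0})$, which then immediately gives a divisorial valuation on $\cO_{x,V}$ witnessing $\lct^\fq_{x,V}(\fb_\bullet)\le\la+\epsilon$. To make the union argument airtight you should remark that the $V_j^\epsilon$ can all be taken inside one fixed neighborhood of $x$, using the coherence of the $\fb_j$ and $\fq$ and Hironaka's theorem for coherent ideal sheaves on complex manifolds; and that the transfer of the divisorial valuation from the analytic modification back to $\cO_{x,V}$ preserves $A(\cdot)$, $v(\fq)$ and $v(\fb_{j_0})$, which is the same localization compatibility you invoked in the first half. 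With those two remarks made explicit, the argument is complete, and has the mild advantage of not using the controlled-growth hypothesis, at the cost of invoking Remmert's proper mapping theorem and a measure-theoretic (or Baire-category) step that the paper's direct chain of inequalities avoids.
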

\begin{proof}
  Let us first prove that $\lct_{x,V}^\fq(\fb_\bullet)\ge\la$.
  For this, fix $m\ge1$ and pick a log resolution
  $\pi_m\colon X_m\to\Spec\cO_x$
  of the ideal $(\fq\fb_m)\cdot\cO_x$.
  After a base change by $\Spec\cO_{x,V}\to\Spec\cO_x$, $\pi_m$
  induces a log resolution 
  of the ideal $(\fq\fb_m)\cdot\cO_{x,V}$.
  We have 
  \begin{equation*}
    \Arn_x^\fq(\fb_m)
    =\max_E\frac{\ord_E(\fb_m)}{A(\ord_E)+\ord_E(\fq)},
  \end{equation*}  
  where the maximum is over the set of 
  prime divisors $E\subseteq X_m$ for which $\ord_E(\fb_m)>0$.
  On the other hand 
  $\Arn_{x,V}^\fq(\fb_m)$ is given by the same expression, 
  but where the maximum is only over the subset of 
  prime divisors $E$ for which 
  $\pi_m(E)$ contains $V$. It is then clear that 
  $\Arn_{x,V}^\fq(\fb_m)\le\Arn_x^\fq(\fb_m)$.
  Dividing by $m$ and letting $m\to\infty$ yields 
  $\Arn_{x,V}^\fq(\fb_\bullet)\le\Arn_x^\fq(\fb_\bullet)$
  and hence
  $\lct_{x,V}^\fq(\fb_\bullet)\ge\lct_x^\fq(\fb_\bullet)=\la$.

  Now we prove the reverse inequality.
  Pick any $m\ge 1$. Consider a log resolution 
  $\pi_m\colon X_m\to\Spec\cO_{x,V}$ 
  of the ideal $\fq\fb_m\cdot\cO_{x,V}$. 
  As in~\S\ref{S108} this gives rise to an open neighborhood $U_m$
  of $x$, an analytic subset $W_m\subseteq U_m$ not containing $V\cap U_m$,
  and for each $y\in V\cap (U_m\setminus W_m)$, a
  log resolution $\pi_{m,y}\colon X_{m,y}\to\Spec\cO_y$ of
  $\fq\fb_m\cdot\cO_y$. 
  Further, there is a bijection between the set of prime divisors 
  $E\subseteq X_{m,y}$ such that $\ord_E(\fb_m\cdot\cO_y)>0$ and the 
  set of prime divisors $E\subset X_m$ for which $\ord_E(\fb_m\cdot\cO_{x,V})>0$.
  This implies that 
  $\Arn_y^\fq(\fb_m)=\Arn_{x,V}^\fq(\fb_m)$
  for any $y\in (U_m\setminus W_m)\cap V$
  since both quantities are calculated using the same divisors.
  Thus we have 
  \begin{equation*}
    \Arn_y^\fq(\fb_m)=\Arn_{x,V}^\fq(\fb_m)\le m\Arn_{x,V}^\fq(\fb_\bullet),
  \end{equation*}
  the inequality being definitional.

  Now, for any $y\in(U_m\setminus W_m)\cap V$ and any $j\ge1$ 
  there exists a quasimonomial (or even divisorial) valuation
  $v_j$ at $y$ such that 
  \begin{equation*}
    \frac{v_j(\fb_\bullet)}{A(v_j)+v_j(\fq)}\ge\Arn_y^\fq(\fb_\bullet)-\frac1{j}.
  \end{equation*}
  This gives
  \begin{multline*}
    \la^{-1}
    \le\Arn_y^\fq(\fb_\bullet)
    \le\frac{v_j(\fb_\bullet)}{A(v_j)+v_j(\fq)}+\frac1{j}
    \le\frac{v_j(\fb_m)+A(v_j)}{m(A(v_j)+v_j(\fq))}+\frac1{j}\\
    \le\frac{v_j(\fb_m)}{m(A(v_j)+v_j(\fq))}+\frac1{m}+\frac1j
    \le\frac{\Arn_y^\fq(\fb_m)}{m}+\frac1{m}+\frac1j
    \le\Arn_{x,V}^\fq(\fb_\bullet)+\frac1{m}+\frac1j,
  \end{multline*}
  where the second inequality follows from the assumption that 
  $\fb_\bullet\cdot\cO_y$  has controlled growth.
  Letting first $j\to\infty$ and then $m\to\infty$ we obtain
  $\Arn_{x,V}^\fq(\fb_\bullet)\ge\la^{-1}$,
  and hence $\lct_{x,V}^\fq(\fb_\bullet)\le\la$.
  This completes the proof.
\end{proof}
%
%
%
%
\section{Plurisubharmonic functions}\label{S104}
Let $U$ be a complex manifold. 
A function $\f\colon U\to\R\cup\{-\infty\}$ is 
\emph{plurisubharmonic} (psh) if $\f\not\equiv-\infty$ on any
connected component of $U$, 
$\f$ is upper semicontinuous, and 
$\tau^*\f$ is subharmonic or $\equiv-\infty$ for
every holomorphic map $\tau\colon \D\to U$, where $\D\subseteq\C$ is the unit
disc.
A \emph{germ} of a psh function at a point is defined in the obvious way. 

A basic example of a psh function is $\f=\log\max_i|f_i|$, where 
$f_1,\dots,f_m$ are holomorphic functions on $U$. If $x\in U$
and $\fq\subseteq\cO_x$ is an ideal, then we can define a psh
germ $\log|\fq|$ at $x$ by setting 
\begin{equation}\label{e101}
  \log|\fq|:=\log\max_i|f_i|
\end{equation}
where $f_i\in\cO_x$ are generators of $\fq$. 
The choice of generators only affects $\log|\fq|$ up to 
a bounded additive term. 
If $\fa$ and $\fb$ are two ideals in $\cO_x$, then 
\begin{equation*}
  \log|\fa\cdot\fb|=\log|\fa|+\log|\fb|+O(1).
\end{equation*}
For further facts about psh functions, see~\eg~\cite{Hormander,DemBook}.
%
%
\subsection{Jumping numbers and singularity exponents}\label{S115}
Let $\f$ be a psh germ at a point $x$ on a complex manifold 
and let $\fq\subseteq\cO_x$ be a nonzero ideal. Define
\begin{equation*}
  c_x^\fq(\f)=\sup\{c>0\mid|\fq|^2\exp(-2c\f)\ \text{is locally integrable at $x$}\}.
\end{equation*}
This definition does not depend on the choice of generators used to define $|\fq|$.
We also write
$c_x(\f):=c_x^{\cO_x}(\f)$. In~\cite{DK}, $c_x(\f)$ is called the 
\emph{complex singularity exponent} of $\f$ at $x$,
whereas $c_x^\fq(\f)$ is a \emph{jumping number} 
in the sense of~\cite{ELSV}.
If $\fa\subseteq\cO_x$ is a nonzero ideal, then we have 
\begin{equation*}
  c_x^\fq(\log|\fa|)=\lct^\fq_x(\fa),
\end{equation*}
where the right hand side is defined as in~\S\ref{S105},
see~\cite[Proposition~1.7]{DK}.\footnote{In~\loccit only the case $\fq=\cO_x$ is treated but the same proof works in the general case.}
We have the following generalizations of Conjectures~A and~B.
\begin{ConjAp}
  If $c_x^{\fq}(\f)<\infty$, then the  function $|\fq|^2\exp(-2c_x^\fq(\f)\f)$ is not locally integrable at $x$.
\end{ConjAp}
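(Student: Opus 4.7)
The plan is to reduce Conjecture~A' to a twisted volume estimate and then establish that estimate by adapting the Theorem~D strategy of the paper. Write $\la := c_x^\fq(\f)$ and let $d\mu := |\fq|^2\, dV$ on a small neighborhood $U$ of $x$. I aim for the $\fq$-twisted analogue of Conjecture~B:
\begin{equation*}
\mu\{y \in U : \la\f(y) < \log r\} \gtrsim r^2 \qquad \text{as } r \to 0.
\end{equation*}
Once this holds, Conjecture~A' follows from a layer-cake computation: in
\begin{equation*}
\int_U e^{-2\la\f}\, d\mu = \int_0^\infty \mu\{e^{-2\la\f} > s\}\, ds,
\end{equation*}
the substitution $s = r^{-2}$ turns the right-hand side, up to a positive constant, into $\int_0^\infty r^{-3}\mu\{\la\f < \log r\}\, dr$, whose integrand is $\gtrsim r^{-1}$ near $r = 0$ and hence diverges. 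This is the Demailly--Koll\'ar reduction of Conjecture~B to Conjecture~A, performed with the weighted measure $|\fq|^2 dV$ in place of $dV$.

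To prove the twisted B', I associate to $\f$ the subadditive sequence of analytic multiplier ideals $\fb_j := \cJ(j\f)$. As recalled in~\S\ref{S111} and~\S\ref{S105}, $\fb_\bullet$ has controlled growth, contains a power of $\fm_x$ termwise, and satisfies $\la = \lct^\fq_x(\fb_\bullet)$. Let $V$ be the germ at $x$ of the analytic set $\{y : c_y^\fq(\f) \le \la\}$; lower semicontinuity of $c_y^\fq(\f)$ allows me to arrange that $V$ is smooth at $x$. Passing to the excellent regular local ring $\cO_{x,V}$, Lemma~\ref{L101} gives $\lct_{x,V}^\fq(\fb_\bullet) = \la$. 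Conjecture~E' (equivalently Conjecture~C, by Proposition~\ref{P101}) now supplies a quasimonomial valuation $v$ on $\cO_{x,V}$ that computes $\lct_{x,V}^\fq(\fb_\bullet)$, i.e.\ $A(v) + v(\fq) = \la\, v(\fb_\bullet)$.

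Next I analytify, as in~\S\ref{S108}, a birational morphism $\pi\colon X \to \Spec \cO_{x,V}$ adapted to $v$, producing an analytic modification $\pian\colon \Xan \to U' \setminus W$ with $V \cap U' \not\subseteq W$. At a generic smooth point of $V$ in $U' \setminus W$, the valuation $v$ is realized analytically as the Kiselman number of $\f$ with respect to monomial coordinates at the center of $v$ on $\Xan$. Standard properties of Kiselman numbers, combined with Demailly's approximation $\f \approx j^{-1}\log|\fb_j|$, convert the equality $A(v) + v(\fq) = \la\, v(\fb_\bullet)$ into a pointwise bound on the decay of $\f$ along a one-parameter family of polydiscs adapted to these monomial coordinates and shrinking transversally to $V$. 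Fubini integration over this family, in which the Jacobian contributes the factor $A(v)$ and the weight $|\fq|^2$ contributes the factor $v(\fq)$, yields the desired volume lower bound.

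The main obstacle I anticipate is precisely this last translation from valuation-theoretic data to an analytic volume estimate. Two points require care. First, the analytification in~\S\ref{S108} is ad hoc and not functorial, so one must choose the generic point of $V$ so that the monomial-coordinate picture on $\Xan$ faithfully reflects the algebraic situation on $X$ near the center of $v$. Second, and more fundamentally, it is \emph{essential} that the infimum defining the log canonical threshold be attained by a quasimonomial, rather than arbitrary, valuation: only then do the Kiselman coordinates exist and the weighted integral factor as a genuine product, so that the contributions of $A(v)$, $v(\fq)$, and $v(\fb_\bullet)$ recombine cleanly into the $r^2$ bound. This is why the reduction must go all the way to Conjecture~E', and why the unconditional existence result of Proposition~\ref{existence_subadditive} — which only yields \emph{some} valuation computing the threshold — is insufficient on its own.
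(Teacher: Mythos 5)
Your reduction A' $\Leftarrow$ B' $\Leftarrow$ E' matches the paper's Theorem~D' (with A' following from B' by the layer-cake computation you describe), and your insistence that the computing valuation be quasimonomial so that Kiselman coordinates become available is indeed the crux of the whole approach. However, there is a genuine gap at the point where you invoke Conjecture~E' on $\cO_{x,V}$. That conjecture requires the hypothesis $\fm_{x,V}^{pj}\subseteq\fb_j\cdot\cO_{x,V}$ for some fixed $p$ and all $j\ge1$; you assert that $\fb_\bullet$ ``contains a power of $\fm_x$ termwise,'' citing \S\ref{S111} and \S\ref{S105}, but nothing there supplies this, and it fails for general psh $\f$: the multiplier ideals $\cJ(j\f)$ can sink into increasingly deep powers of $I_V$ as $j$ grows, at a rate not controlled by any fixed linear bound, precisely because $\f$ need not satisfy $\f\ge p\log|I_V|+O(1)$ for any $p$.

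The paper addresses this with the analytic reduction of Lemma~\ref{L201}: one replaces $\f$ by $\tilde\f:=\max\{\f,\,p\log|I_V|\}$ for $p$ sufficiently large, shows that $c_x^\fq(\tilde\f)=c_x^\fq(\f)$, and observes that the B' estimate for $\tilde\f$ implies the one for $\f$ since $\tilde\f\ge\f$. The equality of jumping numbers is the nontrivial part; it uses the coherence of the colon ideal $\fa_\mu=(\cJ(\mu\f):\fq)$ for some $\mu>\la$ with $V_\mu=V$, the Nullstellensatz to get $I_V^N\subseteq\fa_\mu$, the choice $p>N/(\mu-\la)$, and an integrated volume comparison to show that the auxiliary set where $\mu\f-\log|\fq|-N\log|I_V|$ is small contributes negligibly. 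Only after this reduction does Lemma~\ref{L105} yield the required $\fm_{x,V}^{pj}\subseteq\fb_j\cdot\cO_{x,V}$. Without this step the hypothesis of Conjecture~E' is simply unavailable and the argument does not close; it should be restored before the appeal to Proposition~\ref{P101}.
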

This conjecture, which is also due to Demailly and Koll\'ar,
can be paraphrased as a semicontinuity statement for 
multiplier ideals, see Remark~\ref{R102}.
\begin{ConjBp}
  If $c_x^{\fq}(\f)<\infty$, then for any open neighborhood $U$ of $x$ on which $\f$ and $\fq$ 
  are defined, we have
  \begin{equation}\label{e117}
    \Vol\{y\in U\mid c_x(\f)\f(y)-\log|\fq|<\log r\}\gtrsim r^2
  \end{equation} 
  as $r\to 0$.
\end{ConjBp}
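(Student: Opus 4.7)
The plan is to prove Conjecture~B' assuming Conjecture~E' (equivalently C' or C''), following the strategy sketched in the introduction. Let $\f$ be a psh germ at $x$ with $\la := c_x^\fq(\f) < \infty$. First, I would associate to $\f$ the \emph{subadditive sequence of multiplier ideals} $\fb_j := \cJ(j\f) \subseteq \cO_x$ (extended to a neighborhood of $x$). By the subadditivity theorem of Demailly--Ein--Lazarsfeld, $\fb_\bullet$ is subadditive; by Demailly's approximation machinery, it has controlled growth, approximates the singularities of $\f$ (in particular $c_x^\fq(\f) = \lct^\fq_x(\fb_\bullet)$), and satisfies $\fm_x^{pj} \subseteq \fb_j$ for some $p$ after multiplying $\f$ by an appropriate constant bound. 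This reduces the analytic problem to producing a good valuation on the algebraic sequence $\fb_\bullet$.

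Next, I would \emph{localize along the log canonical locus}. Define $V$ as the germ at $x$ of the analytic set $\{y \mid c_y^\fq(\f) \le \la\}$; by lower semicontinuity of the complex singularity exponent, $V$ is a proper analytic subset through $x$, and after a further reduction (slicing by a generic linear subspace, or using the generic point of an irreducible component), one may assume $V$ is smooth at $x$. Applying Lemma~\ref{L101} to $\fb_\bullet$, $\fq$, and this $V$ yields
\begin{equation*}
  \lct^\fq_{x,V}(\fb_\bullet) = \la.
\end{equation*}
Since $\cO_{x,V}$ is excellent, regular, and of equicharacteristic zero, and $\fb_\bullet \cdot \cO_{x,V}$ inherits controlled growth and the bound $\fm_{x,V}^{pj} \subseteq \fb_j \cdot \cO_{x,V}$, Conjecture~E' produces a quasimonomial valuation $v$ on $\cO_{x,V}$ computing $\lct^\fq_{x,V}(\fb_\bullet) = \la$.

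The third step is to transport $v$ back to the analytic setting. Choose a projective birational morphism $\pi \colon X \to \Spec \cO_{x,V}$ with $X$ regular and algebraic local coordinates $y_1,\dots,y_k$ at $c_X(v)$ with respect to which $v$ is monomial with weights $\a_1,\dots,\a_k > 0$. Using the analytification procedure of~\S\ref{S108}, one obtains a proper modification $\pian \colon \Xan \to U \setminus W$ defined on a neighborhood of $x$, with the $y_i$ becoming meromorphic functions on $U \setminus W$ (regular near the generic point of $V$). The valuation $v$ corresponds to the \emph{Kiselman number} of $\f$ with weights $\a_i$ along the exceptional divisors $\{y_i=0\}$: a limit $\nu(\f) := \lim_{t\to-\infty} t^{-1} \sup\{\f(y) \mid |y_i| \le e^{t/\a_i}\}$ taken at a generic point of $V$. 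The equality $A(v) + v(\fq) = \la \cdot v(\fb_\bullet)$ translates, using $v(\fb_\bullet) = v(\f)$ (the Kiselman number sees multiplier ideals asymptotically) and $A(v) = \sum \a_i$, into the identity $\la \nu(\f) = \sum \a_i + \nu(\log|\fq|)$.

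Finally, I would derive the volume estimate~\eqref{e117} from this Kiselman-number identity. The key geometric point is that on the polydisc $P_t = \{|y_i| \le e^{t/\a_i}\}$, the psh estimate $\f \le \nu(\f) t + O(1)$ combined with the local behavior of $\log|\fq|$ yields, after a Fubini-type computation on the tubular neighborhood of $V$ in the $y$-coordinates, that the set $\{\la\f - \log|\fq| < \log r\}$ contains a polydisc of polyradius $\approx r^{\a_i/\sum \a_j}$ in the $y_i$-directions times a fixed transverse neighborhood, whose volume is $\gtrsim r^2 \cdot \prod (\text{transverse factors})$, giving exactly the $\gtrsim r^2$ bound. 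The main obstacle I anticipate is this last step: carefully relating the algebraic identity $\la v(\fb_\bullet) = A(v) + v(\fq)$, which is an equality of weighted integers, to the volume of a sublevel set of $\f$ in analytic coordinates. One must pass from multiplier-ideal information back to pointwise bounds on $\f$ (not just on $\fb_j$), and handle the meromorphy of the algebraic coordinates $y_i = a_i/b_i$ on $U$; this is where the choice of the component $V$, the controlled growth property, and the genericity of the point along $V$ all enter crucially.
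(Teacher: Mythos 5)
Your overall strategy tracks the paper's proof of Theorem~D' exactly: form the subadditive sequence $\fb_j=\cJ(j\f)$, use Demailly approximation (Proposition~\ref{P102}) and the lower semicontinuity of $c_y^\fq(\f)$ to localize along the log canonical locus $V$, apply Lemma~\ref{L101} and Conjecture~E' to obtain a quasimonomial valuation $v$ on $\cO_{x,V}$ computing $\lct^{\fq}_{x,V}(\fb_\bullet)=\la$, transport via the Kiselman number, and then estimate the volume of sublevel sets on pushforwards of polydisks. Two smaller inaccuracies: the $\fm_{x,V}$-primariness hypothesis in Conjecture~E' is obtained by replacing $\f$ with $\max\{\f,\,p\log|I_V|\}$ (Lemma~\ref{L201}), not by ``multiplying by a constant''; and the ``meromorphy of $y_i=a_i/b_i$'' is not an obstacle here --- the local coordinates are regular near the generic point of $Z^{\mathrm{an}}$ on $X^{\mathrm{an}}$, and that suffices.

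The genuine gap is in the final volume computation, which you correctly flag as the main obstacle but leave unresolved, and your stated formulas would not give the result. You write $A(v)=\sum_i\a_i$, which is wrong in general: the correct formula is $A(v)=\sum_i\a_iA_i$ with $A_i=A(\ord_{D_i})=1+\ord_{D_i}(K_{X/\Spec\cO_{x,V}})\geq 1$, and typically $A_i>1$ since the divisors are exceptional for the blowup adapted to $v$. Consequently the scaling must be by $A(v)$, not $\sum\a_i$: one takes polydisks with $\log|u_i|/\a_i\approx -k$ and sets $r\approx e^{-kA(v)}$, not $r^{\a_i/\sum\a_j}$. With your scaling, the polydisk volume downstairs is $\approx r^{2A(v)/\sum\a_j}\leq r^2$, which is the \emph{wrong direction}. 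The missing ingredient is the Jacobian of $\pian$: since $|J\pian|^2\approx\prod_i|u_i|^{2(A_i-1)}$ near the generic point of $Z^{\mathrm{an}}$, the volume of $\pian(\Omega_k)$ in $U$ contributes $\prod_i e^{-2k\a_iA_i}=e^{-2kA(v)}$, not $\prod_i e^{-2k\a_i}$. It is precisely this Jacobian contribution that makes the $A_i$ appear in both the sublevel-set bound $\la\f-\log|\fq|\leq -kA(v)+O(1)$ (via the identity $\la\tau(\f\circ\pian)-v(\fq)=A(v)$) and the volume bound $\Vol\pian(\Omega_k)\gtrsim e^{-2kA(v)}$, so that the two match up to give $\gtrsim r^2$ exactly. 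Without it you have a mismatch, not an estimate.
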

It is clear that Conjecture~B' implies Conjecture~A' and that neither conjecture
depends on the choice of generators of $\fq$.
The following result is a variation of Theorem~D from the introduction.
We shall prove both these theorems in~\S\ref{S103}. 

\begin{ThmDp}
  If Conjecture~C'' in~\S\ref{S113} holds for all $m\le n$ 
  and all algebraically closed fields
  $k$ of characteristic zero,
  then Conjecture~B' holds on complex manifolds of dimension $n$.
\end{ThmDp}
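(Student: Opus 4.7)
The plan is to translate the analytic volume estimate into an algebraic statement about a subadditive sequence of analytic multiplier ideals, apply the hypothesis through the equivalence of Conjectures~C$''$ and~E$'$ established in Proposition~\ref{P101} to obtain a quasimonomial valuation on an appropriate localization of $\cO_x$, and then convert this valuation back into a volume bound via the analytification procedure of \S\ref{S108} together with Kiselman-type directional numbers.

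Set $\la:=c_x^\fq(\f)$. On a neighborhood $U$ of $x$, define $\fb_j:=\cJ(j\f)$, the analytic multiplier ideal of $j\f$. Subadditivity of $\fb_\bullet=(\fb_j)$ follows from~\cite{DEL}, and Demailly's approximation theorems (to be recalled in \S\ref{S104}) will yield that $\fb_\bullet$ has controlled growth on $U$, that $\fm_y^{pj}\subseteq\fb_j\cdot\cO_y$ for some $p$ independent of $y$ and $j$ (so the hypothesis of Conjecture~E$'$ is met), that $\lct^\fq_y(\fb_\bullet)=c^\fq_y(\f)$ for all $y$ near $x$, and that $v(\fb_\bullet)=v(\f)$ for every quasimonomial valuation $v$, where $v(\f)$ denotes the Kiselman number of $\f$ along $v$.

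Let $V$ be the germ at $x$ of the analytic set $\{y:c^\fq_y(\f)\le\la\}$. After a holomorphic change of coordinates we may assume $V$ is smooth at $x$, so $\cO_{x,V}$ is an excellent regular local ring of equicharacteristic zero. Lemma~\ref{L101} applied to $\fb_\bullet\cdot\cO_{x,V}$ gives $\lct^\fq_{x,V}(\fb_\bullet)=\la$. By Proposition~\ref{P101}, the hypothesis of Theorem~D$'$ (Conjecture~C$''$ in dimension $\le n$) implies Conjecture~E$'$ in the same range, and the latter applied to $\fb_\bullet\cdot\cO_{x,V}$ and $\fq\cdot\cO_{x,V}$ produces a quasimonomial valuation $v$ on $\cO_{x,V}$ satisfying
\begin{equation*}
v(\f)=v(\fb_\bullet)=\la^{-1}\bigl(A(v)+v(\fq)\bigr).
\end{equation*}

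Finally, choose a projective birational morphism $\pi\colon X\to\Spec\cO_{x,V}$, with $X$ regular, adapted to $v$, so that $v$ is monomial with rationally independent positive weights $\a_1,\dots,\a_k$ in algebraic local coordinates $y_1,\dots,y_k$ at the center of $v$ on $X$. Via the analytification of \S\ref{S108} this produces a proper modification $\pian\colon\Xan\to U'\setminus W$, with $U'\ni x$ and $W\subsetneq U'$ an analytic subset not containing $V$; the $y_i$ become holomorphic near the center of $v$, the Jacobian $|J_{\pian}|^2$ contributes the factor $A(v)$ to the weighted order of the pulled-back volume form, and $v$ is realized as the Kiselman number attached to the weights $(\a_i)$ in these analytic coordinates. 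A Kiselman-type pointwise upper bound then yields, near the center of $v$ on $\Xan$, an estimate of the form $\la(\f\circ\pian)-\log|\fq\circ\pian|\le\sum_i\b_i\log|y_i|+O(1)$, with weights $\b_i$ whose $(\a_i)$-weighted sum equals $\la v(\f)-v(\fq)$. Locating a Reinhardt-type neighborhood $\{|y_i|<r^{\g_i}\}$ inside the sublevel set of $\la\f-\log|\fq|$ and pushing forward via $\pian$, the identity $A(v)+v(\fq)=\la v(\f)$ is precisely what is needed to make the resulting volume scale as $r^2$, giving the bound $\gtrsim r^2$ of Conjecture~B$'$ (with the coefficient understood as $\la=c_x^\fq(\f)$). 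The principal obstacle is this last analytic step: the pointwise Kiselman bound for $\f\circ\pian$ must be strong enough that sublevel sets of $\la\f-\log|\fq|$ contain, not merely approximate, weighted polydiscs; the Jacobian of $\pian$ must be carefully tracked against the log discrepancy $A(v)$; and the removed locus $W$, which may contain $x$, must be handled without losing the $r^2$ mass.
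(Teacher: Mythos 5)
Your overall strategy matches the paper's: associate to $\f$ the subadditive sequence $\fb_j=\cJ(j\f)$, localize at the generic point of the log canonical locus $V$, invoke Conjecture~E$'$ via Proposition~\ref{P101}, and read off the volume estimate via analytification and Kiselman numbers. But there is a genuine gap in your verification of the hypothesis of Conjecture~E$'$. You assert that Demailly approximation yields $\fm_y^{pj}\subseteq\fb_j\cdot\cO_y$ for some $p$ independent of $y$ and $j$. Both the statement and the attribution are incorrect: the inclusion $\fm_y^{pj}\subseteq\fb_j\cdot\cO_y$ is simply false whenever $\f$ is singular along a positive-dimensional set (for $\f=\log|z_1|$ in $\C^2$ one has $\fb_j=(z_1^j)$ and $z_2^{pj}\in\fm_0^{pj}\setminus(z_1^j)$ for every $p,j\ge1$), and Demailly's approximation supplies only the \emph{upper} bound $\f\le\f_p+C/p$, whereas what is needed here is a \emph{lower} bound on $\f$. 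What Conjecture~E$'$ actually requires is the localized inclusion $\fm_{x,V}^{pj}\subseteq\fb_j\cdot\cO_{x,V}$ --- weaker, but still not automatic: it amounts to a uniform upper bound on the Kiselman numbers $v(\f)$ over normalized quasimonomial valuations $v$ of $\cO_{x,V}$, and your proposal offers no argument for such a bound. This is exactly what the paper's Lemma~\ref{L201} is for, and it is the delicate core of the analytic reduction: one replaces $\f$ by $\tilde\f=\max\{\f,p\log|I_V|\}$, which by Lemma~\ref{L105} trivially forces $\fb_j\cdot\cO_{x,V}\supseteq\fm_{x,V}^{pj}$, and then one must prove --- via coherence, the Nullstellensatz applied to the colon ideal $(\cJ(\mu\f):\fq)$, and a careful volume comparison --- that $c_x^\fq(\tilde\f)=c_x^\fq(\f)$ for $p\gg0$. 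Your proposal omits this step entirely.

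A secondary inaccuracy: ``after a holomorphic change of coordinates we may assume $V$ is smooth at $x$'' is not right as stated, since no biholomorphism resolves a singular analytic germ. The paper instead replaces $x$ by a Zariski general (hence smooth) point of $V$; this suffices because establishing the estimate~\eqref{e117} at a dense set of points of $V$ establishes it at $x$.
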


%
%
\subsection{Kiselman numbers}\label{S107}
We now recall an analytic version of monomial valuations
due to Kiselman~\cite{Kis87,Kis94}. It is a
special case of the generalized Lelong numbers introduced by
Demailly~\cite{Dem87}.
As our setting differs slightly from the above references, 
we give some details for the convenience of the reader.

Let $\Omega$ be a complex manifold of dimension $n$, 
$Z\subseteq\Omega$ a connected submanifold of codimension $m\ge 1$
and $D_1,\dots,D_m$ distinct, smooth, connected hypersurfaces 
in $\Omega$ such that $Z=\bigcap_i D_i$ and such that 
the $D_i$ meet transversely along $Z$.
Also suppose we are given positive real numbers $\a_i>0$, $1\le i\le m$.
In this situation we will associate to any psh function $\f$ on $U$
its \emph{Kiselman number} $\tau_{Z,D,\a}(\f)\ge0$. 

In preparation for the definition, pick a point $z\in Z$ 
and local analytic coordinates $(u_1,\dots,u_n)$ at $z$ 
such that $D_i=\{u_i=0\}$ for $1\le i\le m$ locally at $z$.
For $t\in\R^n_{\le0}$ with $t_i\ll0$ 
let $D_u(t)\subseteq\Omega$ be the polydisc with radius $e^t$ and
$S_u(t)$ its distinguished boundary, that is
\begin{equation}
  D_u(t):=\bigcap_{i=1}^n\{|u_i|\le e^{t_i}\}
  \quad\text{and}\quad
  S_u(t):=\bigcap_{i=1}^n\{|u_i|=e^{t_i}\}.
\end{equation}
We also write $D_u(s)=D_u(s,s,\dots,s)$ for $s\in\R_{\le0}$.

Let $\f$ be a psh germ at $z$ and pick $\e>0$ small enough that 
$\f$ is defined in an open neighborhood of the polydisc $D_u(\log\e)$.
For $t\in\R^n_{\le\log\e}$ set
\begin{equation*}
  H(t):=\sup_{D_u(t)}\f=\sup_{S_u(t)}\f.
\end{equation*}
Clearly $H$ is increasing in each argument and it is 
finite-valued since $\f$ is upper semicontinuous.
Less obvious is the fact that $H$ is convex; 
for this see~\cite[p.12]{Kis94}.
Note that $H$ is continuous on the closed set $\R^n_{\le\log\e}$ since it is
defined and convex on an open neighborhood of this set.

We now define a new function
\begin{equation*}
  h=h_{\f,Z,D,z,u,\e}\colon \R^n_{\ge0}\to\R
\end{equation*}
by setting
\begin{equation*}
  h(\a):=\lim_{s\to-\infty}\frac{H(\log\e+s\a)}{s}.
\end{equation*}
The limit is well defined by the convexity of $H$.
\begin{Lemma}\label{L102}
  The function $h=h_{\f,Z,D,z,u,\e}$ has the following properties:
  \begin{itemize}
  \item[(i)]
    $h$ is nonnegative, continuous, concave, 1-homogeneous and 
    increasing in each argument;
  \item[(ii)]
    $h$ does not depend on the choice of $\e$ as long as $\f$ is defined 
    in an open neighborhood of $D(\log\e)$.
  \end{itemize}
  If, further, $\a_i=0$ for $i>m$, then 
  \begin{itemize}
  \item[(iii)]
    $h$ does not depend on the choice of local coordinates $(u_1,\dots,u_n)$
    at $z$, as long as $D_i=\{u_i=0\}$ for $i\le m$;
  \item[(iv)]
    $h$ does not depend on the choice of point $z\in Z$ 
    as long as $\f$ is defined in a neighborhood of $z$.
  \end{itemize}
\end{Lemma}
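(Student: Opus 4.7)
The plan is to reduce each assertion to elementary facts about the convex, monotone function $H(t) = \sup_{D_u(t)}\varphi$ and the affine reparametrization $t = \log\varepsilon + s\alpha$. Properties (i) and (ii) depend only on the abstract structure of $H$, while (iii) and (iv) require a geometric comparison of polydiscs that exploits the hypothesis $\alpha_i = 0$ for $i > m$.

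For (i), fix $\alpha \in \R^n_{\ge 0}$ and note that $s \mapsto H(\log\varepsilon + s\alpha)$ is convex on $(-\infty, 0]$ and bounded above by $H(\log\varepsilon) < \infty$; standard convex analysis (the difference quotient is monotone in $s$) ensures that the limit defining $h(\alpha)$ exists, and the upper bound on $H$ together with $s < 0$ forces $h(\alpha) \ge 0$. 1-homogeneity is immediate from the substitution $s \mapsto s/t$; monotonicity in $\alpha$ follows from monotonicity of $H$ (with $s < 0$ reversing the inequality); concavity follows from convexity of $H$ at a midpoint, again flipped by $s < 0$. Continuity on the open cone $\R^n_{> 0}$ is automatic for concave finite-valued functions, and extends to the boundary by homogeneity and monotonicity. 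For (ii), if $0 < \varepsilon_1 \le \varepsilon_2$ both suffice, monotonicity of $H$ and boundedness of its one-sided $s$-derivative (a consequence of convexity plus upper boundedness) yield that $H(\log\varepsilon_2 + s\alpha) - H(\log\varepsilon_1 + s\alpha)$ is bounded uniformly in $s$, so division by $s \to -\infty$ eliminates the bounded difference.

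For (iii), given two coordinate systems $(u)$ and $(u')$ at $z$ with $D_i = \{u_i = 0\} = \{u'_i = 0\}$ for $i \le m$, the transition satisfies $u'_i = u_i\, g_i(u)$ for $i \le m$ with $g_i$ holomorphic and non-vanishing at $z$, hence $c \le |g_i| \le C$ on a small neighborhood. The hypothesis $\alpha_i = 0$ for $i > m$ makes the polydisc constraint in the $u_j$, $j > m$, directions read $|u_j| \le e^{\log\varepsilon}$, independent of $s$; the holomorphic functions $u'_j(u)$ for $j > m$ are then bounded by a constant on this polydisc. Combining these observations, for suitable $\varepsilon_\pm > 0$ independent of $s$ one verifies
$$D_{u'}(\log\varepsilon_- + s\alpha) \subseteq D_u(\log\varepsilon + s\alpha) \subseteq D_{u'}(\log\varepsilon_+ + s\alpha),$$
so that monotonicity of $H$, division by $s < 0$, passage to the limit, and (ii) together give coordinate-independence. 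For (iv), by (iii) we may fix a single chart covering both $z$ and a nearby $z' \in Z$ with $D_i = \{u_i = 0\}$ for $i \le m$, which forces $u_i(z) = u_i(z') = 0$ for $i \le m$; the polydiscs used at $z$ and $z'$ then differ only by an $s$-independent translation in the $u_j$, $j > m$, directions. Since $\alpha_j = 0$ in those directions, this translation is absorbed into a change of $\varepsilon$, and (ii) finishes the job; local constancy plus connectedness of $Z$ extends the conclusion globally.

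The main obstacle is (iii): after a nonlinear change of coordinates, $D_u(\log\varepsilon + s\alpha)$ is not a polydisc in the $u'$ coordinates but a mild deformation of one, and the sandwich must be arranged so that all corrections are $s$-independent. The crucial role of the hypothesis $\alpha_i = 0$ for $i > m$ is precisely that the transition functions then contribute only bounded additive corrections to $\log\varepsilon$, which die upon division by $s \to -\infty$; were $\alpha_j > 0$ for some $j > m$, the nontrivial $u$-dependence of $u'_j$ would survive the limit and $h$ would genuinely depend on the coordinates.
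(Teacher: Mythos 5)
Your treatment of (i), (iii) and (iv) follows essentially the same route as the paper (transition factors $u_i = u'_i g_i$ for $i \le m$, plus the observation that for $j > m$ the radius $e^{\log\e}$ is $s$-independent because $\a_j = 0$, so translations and bounded distortions can be absorbed into a change of $\e$). The problem is (ii), where you assert that $H(\log\e_2 + s\a) - H(\log\e_1 + s\a)$ is bounded uniformly in $s$, deduced from boundedness of the one-sided $s$-derivative of $H$. Two things go wrong. First, boundedness of $\frac{d}{ds} H(\log\e + s\a) = \sum_i \a_i\,\partial_i H$ does not control the difference between the two paths, because $\{\log\e_1 + s\a\}$ and $\{\log\e_2 + s\a\}$ are parallel lines offset by $(\log\e_2 - \log\e_1)\mathbf{1}$, not two points on one $\a$-ray; what you actually need is a bound on $\sum_i \partial_i H(\log\e_2 + s\a)$, i.e.\ the derivative in the diagonal direction, and for indices $i$ with $\a_i = 0$ the partial $\partial_i H$ need not be bounded along the path. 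Second, the claimed boundedness genuinely fails: take $\f(u_1,u_2) = -\sqrt{(c - \log|u_1|)(c - \log|u_2|)}$, giving $H(t_1,t_2) = -\sqrt{(c-t_1)(c-t_2)}$, which is convex and increasing, and $\a = (1,0)$; then $H(\log\e_2 + s\a) - H(\log\e_1 + s\a)$ grows like $\sqrt{|s|}$.

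The conclusion of (ii) is of course still true, but your justification has a hole. The paper avoids this by first proving (ii) for strictly positive weights $\a\in\R^n_{>0}$: setting $\d = \min_i\a_i$, the polydisc inclusion $D_u(\log\e + s\a)\subseteq D_u(\log\e' + (s + \tfrac1\d\log\tfrac{\e}{\e'})\a)$ turns the change of $\e$ into a shift of $s$ along a single ray, where the limit is unaffected; the general case $\a\in\R^n_{\ge 0}$ then follows from the continuity of $h$ established in (i). If you want to rescue your ``difference'' argument, you would have to upgrade the bound from $O(1)$ to $o(|s|)$ — which does hold, but requires showing that the diagonal derivative $\sum_i\partial_i H(\log\e_2 + s\a)$ is $o(|s|)$, which is a nontrivial additional convexity estimate rather than something that follows from ``upper boundedness'' alone.
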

\begin{proof}
  To alleviate notation, we shall only write out the relevant part of the
  subscripts of $h=h_{\f,Z,D,z,u,\e}$.
  
  The fact that $h$ is nonnegative, continuous, concave and increasing follows
  from $H$ being continuous, convex and increasing.
  That $h$ is 1-homogeneous is clear. This proves~(i).

  As for~(ii), suppose $0<\e'<\e$. It is clear that $h_{\e'}\ge h_\e$ since $H$ is 
  increasing. To prove the reverse
  inequality, first suppose that $\a_i>0$ for $1\le i\le n$ and set 
  $\d=\min_i\a_i$. Then 
  \begin{equation*}
    D_u(\log\e+s\a)\subseteq D_u(\log\e'+(s+\frac{1}{\d}\log\frac{\e}{\e'})\a),
  \end{equation*}
  so that $H(\log\e+s\a)\le H(\log\e'+(s+\frac1{\d}\log\frac{\e}{\e'})\a)$ 
  for any $s\le 0$.
  This implies $h_{\e}(\a)\ge h_\e(\a)$. By continuity of $h_\e$ and $h_{\e'}$
  we get $h_\e\ge h_{\e'}$ and hence $h_\e=h_{\e'}$ on $\R^n_{\ge0}$.
  
  Now we turn to~(iii) and~(iv) so suppose $\a_i=0$ for $i>m$.

  Let $(u'_1,\dots,u'_n)$ be another set of local analytic
  coordinates at $z$ such that $D_i=\{u'_i=0\}$
  for $1\le i\le m$. 
  We can write $u_i=u'_ig_i$ for $1\le i\le m$, where $g_i\in\cO_z$ and 
  $g_i(z)\ne0$. 
  It is easy to see that if $\e>0$ is small enough, 
  then there exists $\e'>0$
  such that 
  \begin{equation*}
    D_{u'}(\log\e'+s\a)\subseteq D_u(\log\e+s\a)
  \end{equation*}
  for all $s\le0$.
  This gives $h_{u'}(\a)=h_{u',\e'}(\a)\ge h_{u,\e}(\a)=h_u(\a)$ and the 
  reverse inequality follows by symmetry. Thus~(iii) holds.

  Finally we prove~(iv). Thus pick a point $z\in Z$,
  and a set of local coordinates $u$ at $z$.
  Pick $0<\e\ll1$ and
  $z'\in D_u(\log\e)\cap Z$. 
  Then $u':=u-u(z')$ defines local coordinates at $z'$ 
  and 
  for any $\a$ as above and any $s\le 0$ we have 
  \begin{equation*}
    D_{u'}(\log\e+s\a)\subseteq D_{u}(\log2\e+s\a)
    \quad\text{and}\quad
    D_u(\log\e+s\a)\subseteq D_{u'}(\log2\e+s\a).
  \end{equation*}
  This implies that 
  $h_{z'}=h_{z',u',\e}\ge h_{z,u,2\e}=h_z$ and, similarly, $h_z\ge h_{z'}$.
  Thus $z\mapsto h_z$ is locally constant on $Z$, which completes
  the proof since $Z$ is connected.
\end{proof}
Now assume $\a_i=0$ for $i>m$ and $\a_i>0$ for $1\le i\le m$.
The number 
\begin{equation*}
  \tau_{Z,D,\a}(\f):=h_{\f,Z,D}(\a)
\end{equation*}
is called the 
\emph{Kiselman number}\footnote{In~\cite{Kis87,Kis94}, the Kiselman number 
  is called \emph{a refined Lelong number} whereas 
  Demailly~\cite{Dem87} calls it a \emph{directional Lelong number}.}
of $\f$ along $Z$ with weight $\a_i$ along $D_i$.
As explained in Lemma~\ref{L102}, it does not depend
on the choice of coordinates $u_i$ defining the hypersurfaces 
$D_i$. However, given such coordinates, it follows
from the convexity of $H$ that we have the estimate
\begin{equation}\label{e102}
  \f\le \tau_{Z,D,\a}(\f)\max_{i\le m}\frac1{\a_i}\log|u_i|+O(1),
\end{equation}
near $z$. From this inequality we easily deduce
\begin{Lemma}\label{L103}
  Suppose $\f$, $\p$ are psh functions defined near some 
  Zariski general point $z\in Z$. Write $\tau=\tau_{Z,D,\a}$.
  Then:
  \begin{itemize}
  \item[(i)]
    if $\f\le\p+O(1)$ near $z$, then $\tau(\f)\ge\tau(\p)$;
  \item[(ii)]
    $\tau(\max\{\f,\p\})=\min\{\tau(\f),\tau(\f)\}$.
  \end{itemize}
\end{Lemma}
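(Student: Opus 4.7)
The approach is to unwind both claims directly from the definitions given in~\S\ref{S107}, using the auxiliary function $H_\f(t)=\sup_{D_u(t)}\f$ and the fact that $h_{\f,Z,D}(\a)=\lim_{s\to-\infty}H_\f(\log\e+s\a)/s$. The two guiding principles are that taking $\sup$ over a fixed polydisc preserves inequalities of the form $\f\le\p+C$, and that dividing by $s<0$ flips inequalities and swaps $\max$ with $\min$. Before starting, I would fix local analytic coordinates $u$ at $z$ adapted to the $D_i$'s and choose $\e>0$ small enough that both $\f$ and $\p$ are defined in an open neighborhood of $D_u(\log\e)$; independence of $h$ from the base point $z\in Z$ and from $\e$, established in Lemma~\ref{L102}(ii),(iv), makes this preparation harmless.

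For (i), the hypothesis gives $\f\le\p+C$ on $D_u(\log\e)$ for some constant $C$, and hence $H_\f(t)\le H_\p(t)+C$ for all $t\le\log\e$. Since $s\le 0$ and $\a\ge 0$ ensure $\log\e+s\a\le\log\e$, we may specialize $t=\log\e+s\a$, divide by $s<0$ (reversing the inequality), and let $s\to-\infty$ (so $C/s\to0$); this yields $h_\f(\a)\ge h_\p(\a)$, which is precisely $\tau(\f)\ge\tau(\p)$.

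For (ii), one direction is immediate from (i) applied to $\f\le\max\{\f,\p\}$ and $\p\le\max\{\f,\p\}$, giving $\tau(\max\{\f,\p\})\le\min\{\tau(\f),\tau(\p)\}$. For the reverse inequality I would use the elementary pointwise identity $H_{\max\{\f,\p\}}(t)=\max\{H_\f(t),H_\p(t)\}$, which holds because the sup of a pointwise maximum over a compact set is the max of the two sups. Dividing by $s<0$ converts the outer $\max$ into a $\min$:
\begin{equation*}
\frac{H_{\max\{\f,\p\}}(\log\e+s\a)}{s}=\min\Bigl\{\frac{H_\f(\log\e+s\a)}{s},\frac{H_\p(\log\e+s\a)}{s}\Bigr\}.
\end{equation*}
Each of the two quotients on the right has a well-defined limit as $s\to-\infty$ (namely $h_\f(\a)$ and $h_\p(\a)$), and since the $\min$ function on $\R^2$ is continuous we may pass to the limit termwise to conclude $h_{\max\{\f,\p\}}(\a)=\min\{h_\f(\a),h_\p(\a)\}$.

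I do not anticipate a genuine obstacle here; both parts are formal consequences of the monotonicity of $\sup$ and the sign convention hiding in the definition of $h$. The only subtlety worth flagging is the initial reduction to a common polydisc on which both $\f$ and $\p$ are defined, which is resolved by invoking the independence properties of $h$ already proved in Lemma~\ref{L102}.
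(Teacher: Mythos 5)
Your proof is correct, and for part~(ii) it is somewhat simpler than the argument the paper has in mind. The paper proves the inequality $\tau(\max\{\f,\p\})\ge\min\{\tau(\f),\tau(\p)\}$ by invoking the convexity estimate~\eqref{e102}, namely $\f\le\tau(\f)\max_{i\le m}\frac1{\a_i}\log|u_i|+O(1)$, applied to both $\f$ and $\p$ and then combined via monotonicity~(i). You instead use the exact identity $H_{\max\{\f,\p\}}=\max\{H_\f,H_\p\}$ (pointwise sup commutes with max over a fixed domain), the sign flip from dividing by $s<0$, and continuity of $\min$. This is a cleaner and more self-contained computation; it even yields the full equality in one stroke, making your preliminary paragraph deducing the ``$\le$'' direction from~(i) redundant, though harmless. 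The price the paper pays for its route is that it must know~\eqref{e102}, but it has already derived that estimate from convexity of $H$ for use elsewhere, so the two approaches end up at comparable length. Part~(i) is identical in spirit to what the paper intends by ``follows immediately from the definition.''
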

\begin{proof}
  The inequality in~(i) follows immediately from the definition.
  As for~(ii), note that~(i) implies
  $\tau(\max\{\f,\p\})\le\min\{\tau(\f),\tau(\f)\}$. 
  The reverse inequality follows from~\eqref{e102}.
\end{proof}
\begin{Remark}
  It is also true that $\tau(\f+\p)=\tau(\f)+\tau(\p)$, but we do not 
  need this fact.
\end{Remark}
\begin{Remark}
  Using the same construction, we can define $\tau_{Z,D,\a}$ when 
  $Z$ and the $D_i$ are germs of complex submanifolds at a point 
  in a complex manifold.
\end{Remark}
\begin{Remark}
  When $\a_i=1$ for $1\le i\le m$, the choice of hypersurfaces $D_i$
  play no role; in this case the Kiselman number is equal to the
  \emph{Lelong number} along $Z$~\cite{Lelong}. 
\end{Remark}
%
%
\subsection{Kiselman numbers and quasimonomial valuations}\label{S114}
Let $U$ be a complex manifold, $x\in U$ a point and 
$V$ the germ at $x$ of a complex submanifold of $U$.
We allow for the case $V=x$ but assume that $V$ has codimension $\ge 1$.
As in~\S\ref{S108} let $\cO_{x,V}$ be the localization of $\cO_x$ at the ideal $I_V\cdot\cO_x$
and let $\fm_{x,V}$ be the maximal ideal of $\cO_{x,V}$.
Let $v$ be a quasimonomial valuation of $\cO_{x,V}$.
We want to associate to $v$ a Kiselman number on a suitable modification.

Consider a projective birational morphism $\pi\colon X\to\Spec\cO_{x,V}$ that is
adapted to $v$ in the sense of~\S\ref{S110}.
Thus there exist prime divisors $D_1,\dots,D_m$ on $X$ such that $\sum D_i$ 
has simple normal crossing singularities, and an irreducible component $Z$ 
of $\bigcap_{i=1}^mD_i$ such that $v$ is monomial with weight $\a_i>0$ on $D_i$
for $1\le i\le m$. 
The assumption that $v\ge 0$ on $\cO_{x,V}$
implies that $\pi(Z)\subset V$.
Let $\xi$ be the generic point of $Z$ and pick functions 
$u_i\in\cO_{X,\xi}$, $1\le i\le m$, such that $D_i=(u_i=0)$.
Thus the functions $u_i$ are regular on a Zariski open subset of $Z$.

Using the construction and conventions of~\S\ref{S108}, 
after shrinking $U$ a little, the projective birational morphism
$\pi\colon X\to\Spec\cO_{x,V}$ 
gives rise to a complex manifold
$\Xan$ and a proper modification 
$\pian\colon\Xan\to U\setminus W$, where 
$W\subseteq U$ is a complex subvariety not containing $V$. 

Further, after again shrinking $U$ and increasing $W$ if necessary,
there exists an open subset $\Omega$ of $\Xan$ 
on which the functions $u_i$, $1\le i\le m$, are holomorphic
and such that the following properties hold: 
the sets $\Dan_i:=(u_i=0)$ are complex submanifolds of $\Omega$ 
of codimension one, meeting transversely along the connected
submanifold $\Zan:=\bigcap_{1\le i\le m}\Dan_i$. 
Further $\pian(\Zan)\supseteq V\setminus W$.

Let $\tau=\tau_{\Zan,\Dan,\a}$ denote the Kiselman number with respect to the data above,
see~\S\ref{S107}.
\begin{Def}
  If $\f$ is the germ of a psh function at $x$, 
  then we define
  \begin{equation}\label{e110}
    v(\f):=\tau(\f\circ\pian).
  \end{equation}
\end{Def}
Note that this definition \textit{a priori} depends on a lot
of choices made above. However, we have:
\begin{Prop}\label{P103}
  The definition of $v(\f)$ does not depend on any choices made
  as long as the birational morphism $\pi\colon X\to\Spec\cO_{x,V}$
  is adapted to $v$.
\end{Prop}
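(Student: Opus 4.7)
The claim is that $v(\f)$ depends only on $v$ and $\f$, and not on the numerous choices in its definition: the adapted morphism $\pi$, the SNC divisor $\sum D_i$, the stratum $Z$, the weights $\a_i$, the coordinates $u_i$, the basepoint $z\in\Zan$, and the analytic shrinkings $U$, $W$, $\Omega$. Independence from $u_i$ and $z$ is already supplied by Lemma~\ref{L102}(iii)--(iv), while independence from $U$, $W$, $\Omega$ is immediate since the Kiselman number depends only on germs of $\f\circ\pian$ along $\Zan$. The real content is therefore independence from the choice of adapted morphism $\pi$.

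Given two projective birational morphisms $\pi_i\colon X_i\to\Spec\cO_{x,V}$ ($i=1,2$) both adapted to $v$, the plan is to compare them through a common dominant refinement. Hironaka's resolution applied to the closure of the graph of $\pi_2\circ\pi_1^{-1}$, with the total transform of $D_i^{(1)}\cup D_i^{(2)}$ made SNC, produces a regular model $\pi_3\colon X_3\to\Spec\cO_{x,V}$ dominating both and still adapted to $v$. It therefore suffices to prove that if $\mu\colon X_3\to X_1$ is projective birational with $\pi_1$ and $\pi_3=\pi_1\circ\mu$ both adapted to $v$, then the two Kiselman numbers applied to any lift of $\f$ agree.

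Locally at the generic points $\xi_i\in X_i$ of the centers of $v$, the coordinates transform monomially: $u_i\circ\mu=\prod_j(u'_j)^{\b_{ij}}g_i$ with $\b_{ij}\in\Z_{\ge0}$ and $g_i$ a unit at $\xi_3$; evaluating $v$ on both sides yields the consistency relation $\a_i=\sum_j\b_{ij}\a'_j$. After analytification each $g_i$ is bounded and bounded away from zero near $\Zan$, so $\mu^{\mathrm{an}}(D_{u'}(t'))\subseteq D_u(At'+O(1))$ where $A=(\b_{ij})$. Substituting $t'=\log\e_3+s\a'$ yields $(At')_i=s\a_i+O(1)$ and therefore
\begin{equation*}
    H^{(3)}(\log\e_3+s\a')\le H^{(1)}(s\a+O(1)).
\end{equation*}
Dividing by $s<0$ and letting $s\to-\infty$, using that the limiting slope of the convex one-variable function $s\mapsto H^{(1)}(s\a+c)$ is unaffected by a constant shift $c$, gives $\tau^{(3)}(\f\circ\pi_3^{\mathrm{an}})\ge\tau^{(1)}(\f\circ\pi_1^{\mathrm{an}})$.

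The hard part will be the reverse inequality, since $\mu^{\mathrm{an}}$ is only a proper modification: there is no direct polydisc containment $D_u(t)\subseteq\mu^{\mathrm{an}}(D_{u'}(t'))$, and when the matrix $A$ is non-square the monomial relations do not uniquely determine $|u'_j|$ from $|u_i|$. The approach is to use that $\mu^{\mathrm{an}}$ is biholomorphic over the complement of a proper analytic (hence pluripolar) subset $\Sigma\subset\Omega_1$, so $\sup_{D_u(t)}\psi=\sup_{D_u(t)\setminus\Sigma}\psi$ for any psh $\psi$; on $D_u(t)\setminus\Sigma$ one inverts the monomial relation along each asymptotic direction $\a'$, using the properness of $\mu^{\mathrm{an}}$ to give uniform control on the preimage coordinates $|u'_j|$. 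This produces the required estimate $H^{(1)}(s\a)\le H^{(3)}(s\a'+O(1))$ in the limit $s\to-\infty$. An alternative route verifies invariance first for $\psi=\log|f|$ with $f$ holomorphic (where both sides equal the algebraic valuation $v(f)$), and then extends to arbitrary $\f$ via Demailly's approximation by expressions $(2m)^{-1}\log\sum_i|f_{i,m}|^2$ together with Lemma~\ref{L103}(ii).
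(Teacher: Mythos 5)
Your ``alternative route'' at the end is essentially the paper's strategy, while your main proposed argument (common dominating model plus polydisc comparison) has a genuine gap that you yourself flag: the reverse inequality $\tau^{(1)}\ge\tau^{(3)}$ does not follow from properness of $\mu^{\mathrm{an}}$ alone. The issue is exactly the one you raise --- the preimage under $\mu^{\mathrm{an}}$ of a polydisc $D_u(t)$ is not contained in any polydisc $D_{u'}(t')$ with controlled radii, since the monomial relations between $|u_i|$ and $|u'_j|$ are not invertible. Your pluripolar-set suggestion does not resolve this: removing $\Sigma$ allows you to compare suprema, but you still lack a quantitative inclusion between polydiscs on the two models.

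The paper avoids this entirely by proving directly that $\tau(\f\circ\pian)=v(\fb_\bullet\cdot\cO_{x,V})$, where $\fb_j=\cJ(j\f)$; the right-hand side is purely algebraic, so independence from $\pi$ is automatic (this is the content of Proposition~\ref{P102}(iii) and Remark~\ref{R101}). But your sketch of this route is missing its hardest step. The easy half is exactly what you say: $\f\le\f_p+C/p$ from Ohsawa--Takegoshi plus Lemma~\ref{L104} give $\tau(\f\circ\pian)\ge\frac1p v(\fb_p\cdot\cO_{x,V})$, hence $\tau(\f\circ\pian)\ge v(\fb_\bullet\cdot\cO_{x,V})$. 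The crucial \emph{upper} bound $\tau(\f\circ\pian)\le\frac1p v(\fb_p\cdot\cO_{x,V})+\frac1p A(v)$ does \emph{not} follow from Lemma~\ref{L103} or from Demailly approximation alone; it requires showing that every $f\in\cH_p$ satisfies $v(f)+A(v)\ge p\,\tau(\f\circ\pian)$, which the paper proves by an integral estimate: one pulls back the integrability condition $\int|f|^2e^{-2p\f}<\infty$ through $\pian$ over a sequence of annular polydiscs $\Omega_k$ and compares the volume decay (governed by $A(v)$, via the Jacobian $|J\pian|$) against the decay of $|f\circ\pian|^2e^{-2p\f\circ\pian}$. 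Without that computation, the Demailly-approximation route only yields one inequality, and the well-definedness of $v(\f)$ remains unproven.
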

We shall prove this result 
in~\S\ref{S106} using multiplier ideals, see Remark~\ref{R101}.
For now, we only treat the following special case.
\begin{Lemma}\label{L104}
  If $\fb\subseteq\cO_x$ is a nonzero ideal, then 
  \begin{equation}\label{e109}
    v(\fb\cdot\cO_{x,V})=\tau((\log|\fb|)\circ\pian).
  \end{equation}
\end{Lemma}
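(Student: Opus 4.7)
The plan is to reduce to the case of a single holomorphic germ and then compute the Kiselman number directly from the convergent power series expansion at a generic point of $\Zan$.

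\smallskip
\emph{Reduction to one function.} Choose generators $f_1,\dots,f_r\in\cO_x$ of $\fb$. Up to a bounded additive term we have $\log|\fb|=\log\max_i|f_i|$, and a bounded additive term affects neither side of~\eqref{e109} (by Lemma~\ref{L103}(i) on the right, and directly from the definition of $v$ on ideals on the left). Lemma~\ref{L103}(ii), applied iteratively, gives $\tau((\log\max_i|f_i|)\circ\pian)=\min_i\tau((\log|f_i|)\circ\pian)$, while for a monomial valuation one has $v(\fb\cdot\cO_{x,V})=\min_i v(f_i\cdot\cO_{x,V})$. It therefore suffices to prove the identity $v(f)=\tau((\log|f|)\circ\pian)$ for a single nonzero $f\in\cO_x$.

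\smallskip
\emph{Convergent expansion near $\Zan$.} By Lemma~\ref{L102}(iv) we may choose any convenient point $z\in\Zan$ at which to compute the Kiselman number. Pick $z$ generic and complete $u_1,\dots,u_m$ to local analytic coordinates $(u_1,\dots,u_n)$ at $z$, with $w:=(u_{m+1},\dots,u_n)$ parametrizing $\Zan$ locally. For some $\epsilon>0$ we have, on $D_u(\log\epsilon)$, a convergent expansion
\begin{equation*}
  f\circ\pian=\sum_{\beta\in\Z^m_{\ge0}} g_\beta(w)\,u_1^{\beta_1}\cdots u_m^{\beta_m},
\end{equation*}
with each $g_\beta$ holomorphic near $w=0$. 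The algebraic coefficients $c_\beta\in\widehat{\cO}_{X,\xi}$ of~\S\ref{S110} correspond, after identifying completions along $Z$, to the germs of $g_\beta$ at the generic point of $\Zan$; since only countably many $\beta$ arise, a generic choice of $z$ ensures $g_\beta(z)\ne0$ iff $c_\beta\ne0$. Set $\gamma:=\min\{\sum_{j\le m}\a_j\beta_j:g_\beta(z)\ne0\}$; then $\gamma=v(f)$ by the definition in~\S\ref{S110}.

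\smallskip
\emph{Estimating $H$.} For $t=\log\epsilon+s\a$ with $s\to-\infty$, and using $\a_j=0$ for $j>m$, we have $|u_j|\le\epsilon e^{s\a_j}$ for $j\le m$ and $|w_j|\le\epsilon$ on $D_u(t)$. Bounding the series termwise,
\begin{equation*}
  |f\circ\pian|\le\sum_\beta\bigl(\sup_{|w|\le\epsilon}|g_\beta|\bigr)\epsilon^{|\beta|}e^{s\sum_j\a_j\beta_j}\le C\,e^{s\gamma}
\end{equation*}
for some constant $C>0$ (the exponents $\sum_j\a_j\beta_j-\gamma$ are $\ge0$ and the series converges), giving $H(t)\le s\gamma+O(1)$. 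For the matching lower bound, pick $\beta^\ast$ realizing $\gamma$ and a point $w_0$ near $0$ with $g_{\beta^\ast}(w_0)\ne0$. Cauchy's integral formula in the variables $u_1,\dots,u_m$ on the torus $\{|u_j|=\epsilon e^{s\a_j}\}$ with $w=w_0$ fixed yields
\begin{equation*}
  |g_{\beta^\ast}(w_0)|\,\epsilon^{|\beta^\ast|}e^{s\gamma}\le\sup_{S_u(t)}|f\circ\pian|=e^{H(t)},
\end{equation*}
so $H(t)\ge s\gamma+O(1)$. Dividing by $s<0$ and letting $s\to-\infty$ gives $h(\a)=\gamma=v(f)$, which is~\eqref{e109}.

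\smallskip
The chief technical point is the compatibility, in the middle step, between the algebraic coefficients $c_\beta\in\widehat{\cO}_{X,\xi}$ at the scheme-theoretic generic point $\xi$ of $Z$ and the analytic Taylor coefficients $g_\beta(z)$ at a concrete point $z\in\Zan$; this is resolved by the countability of the indexing set together with the freedom to move $z$ within $\Zan$ granted by Lemma~\ref{L102}(iv). The remaining analytic estimates are the classical Kiselman-style convergent-series arguments of~\cite{Kis87,Kis94}.
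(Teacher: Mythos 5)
Your proof is correct and follows the same overall strategy as the paper: reduce to a single germ $f$ via Lemma~\ref{L103}, expand $f\circ\pian$ in local coordinates at a Zariski-generic point $z\in\Zan$, and match the Kiselman number with $v(f)$ by estimating $H$ on shrinking polydiscs. The one genuine variation is in the lower bound: the paper first reduces, by continuity in $\a$, to the case of rationally independent weights $\a_i$, so that a unique dominant term $a_{\bbar}(0)(u')^{\bbar}$ controls the asymptotics on $S_u(\log\e+s\a)$; you instead apply Cauchy's integral formula in the $u'$-variables to extract the coefficient of $(u')^{\beta^\ast}$, which needs neither rational independence nor the continuity reduction. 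This is a modest but real simplification. Two small remarks: the statement ``the $c_\beta\in\widehat{\cO}_{X,\xi}$ correspond to the germs of $g_\beta$'' is loosely phrased (the $c_\beta$ live in the completed local ring at $\xi$ and may carry $u'$-dependence, whereas the $g_\beta$ depend only on the transverse coordinates $w$), though the underlying fact you are using --- that $v(f)=\min\{\beta\cdot\a:g_\beta\not\equiv 0\}$ for the transverse expansion --- is standard and also taken for granted in the paper's proof; and the ``countability plus genericity'' step is more than needed, since the lower bound only requires $g_{\beta^\ast}(z)\ne0$ for a single $\beta^\ast$ realizing $v(f)$.
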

\begin{proof}
  Note that both sides of~\eqref{e109} depend continuously on the weight
  $\a\in\R^m_{>0}$. Hence we may assume that the $\a_i$ are
  rationally independent.  In view of Lemma~\ref{L103} we may also 
  assume that $\fb$ is generated by a single element $f\in\cO_x$.
  We must prove that $\tau(\log|f|\circ\pian)=v(f)$.

  Consider a Zariski general closed point $z\in Z$ and 
  pick functions $u_{m+1},\dots,u_n\in\cO_{X,z}$ such 
  that $u:=(u_1,\dots,u_n)$ define local algebraic coordinates on $X$
  at $z$.
  Write $u':=(u_1,\dots,u_m)$ and $u'':=(u_{m+1},\dots,u_n)$. 
  Consider the expansion of $f\circ\pi$ as a formal power series
  in $\widehat{\cO_{X,\xi}}\simeq\C\llbracket u\rrbracket=\C\llbracket u''\rrbracket\llbracket u'\rrbracket$:
  \begin{equation}\label{e114}
    f\circ\pi
    =\sum_{\b\in\Z_{\ge0}^m,\g\in\Z_{\ge0}^{n-m}}a_{\b\g}(u')^\b(u'')^\g
    =\sum_{\b\in\Z_{\ge0}^m}a_\b(u'')(u')^\b,
  \end{equation}
  where $a_{\b,\g}\in\C$ and 
  \begin{equation}\label{e301}
    a_\b(u'')
    =\sum_{\g\in\Z_{\ge0}^{n-m}}a_{\b\g}(u'')^\g
    \in\C\llbracket u''\rrbracket\subseteq\C\llbracket u\rrbracket.
  \end{equation}
  Since the $\a_i$ are rationally independent, there exists a unique
  $\bbar$ minimizing $\b\cdot\a:=\b_1\a_1+\dots+\b_m\a_m$ over all $\b$
  for which $a_\b\not\equiv0$. By definition, we then have 
  \begin{equation*}
    v(f)=\bbar\cdot\a.
  \end{equation*}

  Since the point $z\in Z$ was generically chosen, it corresponds
  to a point, also denoted $z$, on the complex manifold $\Zan$.
  We may assume that such that $f$ is holomorphic near
  $\pian(z)\in U\setminus W$. 
  Pick $0<\e\ll1$ such that $f\circ\pian$ is holomorphic on the open 
  polydisk $|u_i|<\e$, $1\le i\le n$.
  The first series in~\eqref{e114} is then the Taylor series
  of the holomorphic function $f\circ\pian$ at $z$ in the analytic
  coordinates $u$ and this series converges locally uniformly in the polydisk $\|u\|<\e$.
  Further, for every $\b$, the series in~\eqref{e301} converges locally
  uniformly for $\|u''\|<\e$ to a holomorphic function $a_\b(u'')$ and 
  the second series in~\eqref{e114} converges locally uniformly for 
  $\|u\|<\e$. 

  By assumption, the holomorphic function $a_{\bbar}$ is not constantly equal to zero.
  After moving $z$ (but keeping $z\in\Zan$) a little and translating the coordinates 
  $u_{m+1},\dots,u_n$ accordingly, we may assume that 
  $a_{\bbar}(0)\ne0$. Let us use the notation of~\S\ref{S107}. 
  For $0<\e\ll1$ we have 
  \begin{equation*}
    \log|f|\circ\pian
    =\log|f\circ\pian|
    \sim\log|a_{\bbar}(0)(u')^{\bbar}|\sim s \bbar\cdot\a
  \end{equation*}
  on the set $S_u(\log\e+s\a)$, as $s\to-\infty$. 
  This implies that 
  \begin{equation*}
    \tau(\log|f|\circ\pian)=\bbar\cdot\a=v(f)
  \end{equation*}
   as was to be shown.
\end{proof}
%
%
\subsection{Multiplier ideal sheaves and Demailly approximation}\label{S106}
To a psh function $\f$ on a complex manifold $U$ is associated
a \emph{multiplier ideal sheaf} $\cJ(\f)$.
This is an ideal sheaf on $U$
whose stalk at a point $x$ is the set of holomorphic germs
$f\in\cO_x$ such that $|f|^2e^{-2\f}$ is locally integrable at $x$.
The coherence of $\cJ(\f)$ is a nontrivial result due to Nadel~\cite{Nadel89,Nadel90},
which can be proved using H\"ormander's $\Ltwo$-estimates,
see~\cite[Thm~4.1]{DK}. 

Recall from~\S\ref{S115} the definition of the 
jumping number $c_x^\fq(\f)$ of $\f$ at $x$
relative to an ideal $\fq$ on $U$. 
Given $\mu>0$ consider the 
colon ideal $a_{\mu}=(\cJ(\mu\f):\fq)$ on $U$.
This is an ideal sheaf on $U$ whose 
stalk at a point $x\in U$ is given by
\begin{equation}\label{e126}
  \fa_\mu\cdot\cO_x
  :=\{h\in\cO_x\mid
  |h|^2|\fq|^2e^{-2\mu\f}\
  \text{is locally integrable at $x$}\}.
\end{equation}
Since $\cJ(\mu\f)$ and $\fq$ are coherent, so is $a_\mu$.
\begin{Lemma}\label{L106}
  We have $c_x^\fq(\f)<\mu$ iff $\fa_\mu\cdot\cO_x\ne\cO_x$.
  As a consequence, the function $x\mapsto c_x^\fq$ is 
  lower semicontinuous in the 
  analytic Zariski topology on $U$.
\end{Lemma}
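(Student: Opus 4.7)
The plan is to first translate the condition $\fa_\mu\cdot\cO_x\ne\cO_x$ into a concrete integrability statement by unpacking the stalk description (\ref{e126}), and then to use the coherence of the colon ideal $\fa_\mu$ to deduce Zariski lower semicontinuity.

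For the iff, I would start from the stalk formula (\ref{e126}): it says that $\fa_\mu\cdot\cO_x=\cO_x$ is equivalent to $1\in\fa_\mu\cdot\cO_x$, hence to the local integrability of $|\fq|^2e^{-2\mu\f}$ at $x$. The forward direction $c_x^\fq(\f)<\mu\Rightarrow\fa_\mu\cdot\cO_x\ne\cO_x$ is then immediate from the definition of $c_x^\fq(\f)$ as a supremum: $\mu$ exceeds the sup, so $\mu$ lies outside the set of admissible exponents. For the converse, observe that since $\f$ is upper semicontinuous and hence locally bounded above near $x$ by some constant $M$, we have $|\fq|^2e^{-2c\f}\le e^{2(c'-c)M}|\fq|^2e^{-2c'\f}$ locally for $c<c'$, so local integrability at $c'$ propagates to $c$. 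Consequently the set of good exponents is a downward-closed sub-interval of $(0,\infty)$ with supremum $c_x^\fq(\f)$, and the failure of local integrability at $\mu$ forces $\mu$ above the supremum.

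Granting the iff, the lower semicontinuity follows at once from coherence. The sheaf $\fa_\mu=(\cJ(\mu\f):\fq)$ is a coherent analytic ideal on $U$, since $\cJ(\mu\f)$ is coherent by Nadel's theorem and $\fq$ is coherent by assumption. Hence the vanishing locus
\begin{equation*}
V(\fa_\mu)=\{y\in U\mid \fa_\mu\cdot\cO_y\ne\cO_y\}
\end{equation*}
is Zariski closed in $U$, and by the first part it coincides with the sublevel set $\{y\in U\mid c_y^\fq(\f)<\mu\}$. Varying $\mu$, every such sublevel set is Zariski closed, which is precisely lower semicontinuity of $y\mapsto c_y^\fq(\f)$ in the analytic Zariski topology on $U$.

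I expect the most delicate point to be the converse direction of the iff at the boundary case $\mu=c_x^\fq(\f)$: failure of integrability at $\mu$ gives $c_x^\fq(\f)\le\mu$ at once, and one must verify carefully that the supremum is not attained in this situation so as to upgrade to the strict inequality. Modulo that check, everything else is a direct unpacking of the stalk formula (\ref{e126}) together with the standard fact that the zero locus of a coherent analytic ideal sheaf is Zariski closed.
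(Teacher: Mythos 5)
Your approach follows the paper's, which dismisses the first assertion as ``clear'' and then argues via coherence of the colon ideal. But the ``delicate point'' you flag at the end is not something that can simply ``be verified carefully'': the converse implication $\fa_\mu\cdot\cO_x\ne\cO_x\Rightarrow c_x^\fq(\f)<\mu$ is false precisely when $\mu=c_x^\fq(\f)$ and the openness conjecture holds for $(\f,\fq)$ at $x$---which is the very thing the paper sets out to prove. A concrete counterexample is $n=1$, $\fq=\cO_x$, $\f=\log|z|$, $\mu=1$: here $c_x(\f)=1$, so the left side is false, yet $\fa_1\cdot\cO_x=\cJ(\f)_x=\fm_x\ne\cO_x$. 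What is unconditionally true is the easy implication $c_x^\fq(\f)<\mu\Rightarrow\fa_\mu\cdot\cO_x\ne\cO_x$ (your forward direction), together with the equally easy reverse bound $\fa_\mu\cdot\cO_x\ne\cO_x\Rightarrow c_x^\fq(\f)\le\mu$, which follows from the set of admissible exponents being downward closed. This pair of one-sided statements is all that the proof actually uses.

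The semicontinuity conclusion therefore survives, but your second paragraph must be rearranged, since you cannot identify $\{c_y^\fq(\f)<\mu\}$ with $\supp(\cO_U/\fa_\mu)$ and in fact this set need not be Zariski closed. From the sandwich
\begin{equation*}
\{y\in U\mid c_y^\fq(\f)<\mu\}\ \subseteq\ \supp(\cO_U/\fa_\mu)\ \subseteq\ \{y\in U\mid c_y^\fq(\f)\le\mu\}
\end{equation*}
one gets, for every $\la>0$,
\begin{equation*}
\{y\in U\mid c_y^\fq(\f)\le\la\}=\bigcap_{\mu>\la}\supp(\cO_U/\fa_\mu),
\end{equation*}
which is an analytic subset of $U$ by coherence of each $\fa_\mu$. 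The intersection over $\mu>\la$, which you leave implicit under ``varying $\mu$'', is essential: lower semicontinuity concerns $\le$-sublevel sets, while coherence only directly controls the supports $\supp(\cO_U/\fa_\mu)$, and one passes from one to the other precisely via the intersection as in~\eqref{e128} of the paper.
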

\begin{proof}
  The first statement is clear.
  Hence, for $\mu>0$, the set 
  \begin{equation}\label{e127}
    V_\mu^-:=\{x\in U\mid c_x^\fq(\f)<\mu\}
  \end{equation}
  is equal to the support of the coherent sheaf $\cO_U/\fa_\mu$ and
  in particular an analytic subset of $U$. 
  It follows that for $\la>0$, the set 
  \begin{equation}\label{e128}
    V_\la:=
    \{x\in U\mid c_x^\fq(\f)\le\la\}
    =\bigcap_{\mu>\la}V_\mu^-
  \end{equation}
  is also an analytic subset of $U$.
  This concludes the proof.
\end{proof}
\begin{Remark}\label{R102}
  Conjecture~A' in~\S\ref{S115} is equivalent to a 
  semicontinuity statement about multiplier ideals.
  Indeed, define $\cJ^+(\f)$ as the increasing (locally stationary) limit of 
  $\cJ((1+\e)\f)$ as $\e\searrow0$.
  Then Conjecture~A' precisely says that $\cJ^+(\f)=\cJ(\f)$.
\end{Remark}
If $f_1,\dots,f_m$ are holomorphic functions on $U$, generating an ideal sheaf 
$\fq$ and if $\log|\fq|$ is the corresponding psh function on $U$
defined by~\eqref{e101}, then we have 
\begin{equation}\label{e113}
  \cJ(\log|\fq|)=\cJ(\fq),
\end{equation}
where the right-hand side is defined as in~\S\ref{S116},
see~\cite[Proposition~1.7]{DK}. 
\begin{Lemma}\label{L105}
  If $\f\ge p\log|\fq|+O(1)$ for some integer $p\ge 1$, 
  then 
  $\cJ(\f)\supseteq\fq^p$.
\end{Lemma}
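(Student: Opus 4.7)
The plan is to work stalk by stalk and exploit the definition of the multiplier ideal together with an elementary pointwise bound on elements of $\fq^p$.

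First I would fix a point $x$ in the domain of $\f$ and choose generators $f_1,\dots,f_m\in\cO_x$ of $\fq$ so that $\log|\fq|=\log\max_i|f_i|$ up to a bounded term, as in~\eqref{e101}. Any element $h\in\fq^p\cdot\cO_x$ can be written as a finite sum $h=\sum_\alpha g_\alpha\,f_{i_{\alpha,1}}\cdots f_{i_{\alpha,p}}$ with $g_\alpha\in\cO_x$, so the $g_\alpha$ are locally bounded near $x$ and we get a pointwise estimate
\begin{equation*}
  |h|\le C\max_i|f_i|^p=C\exp\bigl(p\log|\fq|\bigr)
\end{equation*}
in a suitable neighborhood of $x$, for some constant $C>0$.

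Next I would combine this with the hypothesis $\f\ge p\log|\fq|+O(1)$, rewritten as $\exp(2p\log|\fq|)\le C'\exp(2\f)$ near $x$. Squaring the pointwise estimate on $h$ and multiplying by $e^{-2\f}$ yields
\begin{equation*}
  |h|^2e^{-2\f}\le C^2\exp(2p\log|\fq|)\,e^{-2\f}\le C^2C',
\end{equation*}
so $|h|^2e^{-2\f}$ is bounded near $x$ and in particular locally integrable there. By the definition of $\cJ(\f)$ recalled at the start of~\S\ref{S106}, this gives $h\in\cJ(\f)\cdot\cO_x$, and since $x$ was arbitrary, the inclusion $\fq^p\subseteq\cJ(\f)$ follows.

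There is really no obstacle here: the lemma is essentially just unwinding definitions once one observes the elementary bound $|h|\lesssim|\fq|^p$ for $h\in\fq^p$. The only mild care needed is that the constants in the ``$O(1)$'' hypothesis and in the bound on $|h|$ depend on the neighborhood of $x$, but the argument is local and this causes no issue.
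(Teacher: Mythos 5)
Your proof is correct, but it takes a more elementary route than the paper. The paper's argument is a one-line chain
\begin{equation*}
  \cJ(\f)\supseteq\cJ(p\log|\fq|)=\cJ(\fq^p)\supseteq\fq^p,
\end{equation*}
using monotonicity of multiplier ideals, the identification of the analytic multiplier ideal $\cJ(p\log|\fq|)$ with the algebraic one $\cJ(\fq^p)$ (the cited equation~\eqref{e113}, i.e.\ \cite[Proposition~1.7]{DK}), and the standard fact $\cJ(\fa)\supseteq\fa$. You instead bypass~\eqref{e113} entirely and verify the defining integrability condition for $\cJ(\f)$ directly: writing $h\in\fq^p\cdot\cO_x$ as a finite $\cO_x$-combination of $p$-fold products of the generators $f_i$ gives the local bound $|h|\lesssim(\max_i|f_i|)^p=e^{p\log|\fq|}$, and combined with $\f\ge p\log|\fq|+O(1)$ this makes $|h|^2e^{-2\f}$ locally bounded, hence locally integrable. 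Your version is self-contained and avoids invoking the algebraic--analytic comparison of multiplier ideals, at the cost of being a stalkwise computation rather than a formal manipulation; the paper's version is shorter given that~\eqref{e113} has already been stated. Both are sound.
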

\begin{proof}
  In view of the assumption and~\eqref{e113} we have
  \begin{equation*}
    \cJ(\f)
    \supseteq\cJ(p\log|\fq|)
    =\cJ(\fq^p)
    \supseteq\fq^p.
  \end{equation*}
  Here the last inclusion 
  holds since $\cJ(\fa)\supseteq\fa$ for any ideal $\fa$.
\end{proof}
Now fix a psh function $\f$ on $U$. For $j\ge1$ set
\begin{equation*}
  \fb_j:=\cJ(j\f).
\end{equation*}
It follows from~\cite{DEL} that $\fb_\bullet=(\fb_j)_{j=1}^\infty$ 
is a subadditive sequence of ideals on $U$.
The following result (which was known in the case $V=x$, 
see~\cite[Theorem~4.2]{DK} 
and~\cite[Theorem~5.5]{hiro})
allows us to understand the singularities of
$\f$ in terms of those of $\fb_\bullet$. 
\begin{Prop}\label{P102}
  Let $x$ be any point in $U$ and let $V$ be the germ at $x$ of a 
  proper complex submanifold. Define $\cO_{x,V}$ as in~\S\ref{S105}.
  Then the following properties hold:
 \begin{itemize}
    \item[(i)]
      for every nonzero ideal $\fq\subseteq\cO_x$ we have
      $c_x^\fq(\f)=\lct_x^\fq(\fb_\bullet)$;
    \item[(ii)]
      the subadditive sequence $\fb_\bullet\cdot\cO_{x,V}$ has controlled growth;
    \item[(iii)]
      for every quasimonomial valuation $v$ on $\cO_{x,V}$ we have 
      $v(\f)=v(\fb_\bullet\cdot\cO_{x,V})$.
    \end{itemize}      
\end{Prop}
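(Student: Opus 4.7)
The plan is to use Demailly's quantitative approximation procedure (see~\cite{Dem92,Dem93,DK}). For each $j\ge1$, let $\{g_{j,k}\}$ be an $L^2$-orthonormal basis of the Hilbert space of holomorphic germs $f$ near $x$ with $\int|f|^2 e^{-2j\f}\,d\la<\infty$, so that $\fb_j=\cJ(j\f)$ is locally generated by the $g_{j,k}$. The sub-mean-value inequality for psh functions, combined with the Ohsawa--Takegoshi extension theorem, produces two-sided estimates comparing $\f$ with $\frac{1}{2j}\log\sum_k|g_{j,k}|^2$ on a fixed neighborhood of $x$; absorbing the $O(\log j)$ gap between $\sum_k|g_{j,k}|^2$ and $|\fb_j|^2$ gives
\begin{equation*}
j\f(y)-C_1\le\log|\fb_j|(y)\le j\f(y)+C_2(j)
\end{equation*}
on a neighborhood of $x$, with $C_1$ independent of $j$ and $C_2(j)=o(j)$. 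For part~(ii), the lower bound needs to be refined to a pullback version on the modification adapted to $v$, with the correction controlled by the log discrepancy $A(v)$.

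From these estimates, part~(i) follows by a direct integrability manipulation. The upper estimate gives $|\fq|^2 e^{-2c\f}\lesssim|\fq|^2|\fb_j|^{-2c/j}$ near $x$, so $c<j\lct_x^\fq(\fb_j)$ implies $c<c_x^\fq(\f)$, yielding $\lct_x^\fq(\fb_\bullet)\le c_x^\fq(\f)$ upon letting $j\to\infty$; the lower estimate gives the reverse inequality. This generalizes~\cite[Theorem~4.2]{DK} (the case $\fq=\cO_x$) without change. For part~(iii), fix a projective birational $\pi\colon X\to\Spec\cO_{x,V}$ adapted to $v$ and let $\tau=\tau_{\Zan,\Dan,\a}$ denote the associated Kiselman number, so that $v(\fb_j\cdot\cO_{x,V})=\tau(\log|\fb_j|\circ\pian)$ by Lemma~\ref{L104}. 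Applying $\tau$ to the two-sided estimates pulled back via $\pian$ and using its monotonicity and invariance under bounded additive perturbations (Lemma~\ref{L103}(i)) gives $v(\fb_j\cdot\cO_{x,V})/j\to v(\f)$ as $j\to\infty$; the claim follows since $v(\fb_\bullet\cdot\cO_{x,V})=\sup_j v(\fb_j\cdot\cO_{x,V})/j$ equals this limit.

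The main obstacle is part~(ii). Using~(iii), the controlled growth bound reduces to
\begin{equation*}
jv(\f)-v(\fb_j\cdot\cO_{x,V})\le A(v)\qquad\text{for every }j\ge1.
\end{equation*}
The expected argument is a refined Demailly lower estimate on $\Xan$: near the generic point of $\Zan$ one obtains, by pulling back Ohsawa--Takegoshi extensions,
\begin{equation*}
j(\f\circ\pian)(y)\le\log\bigl|\fb_j\cdot\cO_{\Xan}\bigr|(y)-\log|J_{\pian}|(y)+O(1),
\end{equation*}
where $J_{\pian}$ is the Jacobian of $\pian$, whose divisor along $D_i$ equals $\ord_{D_i}(K)$ for $K=K_{X/\Spec\cO_{x,V}}$. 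Applying $\tau$ with weights $\a_i$ along $D_i$ converts $-\log|J_{\pian}|$ into $\sum_i\a_i\ord_{D_i}(K)$, and a contribution $\sum_i\a_i$ arising from the codimension factor in Ohsawa--Takegoshi applied transverse to $\Zan$ combines with this to assemble $A(v)=\sum_i\a_i(1+\ord_{D_i}(K))$. Verifying this pullback estimate rigorously, tracking the Jacobian correction and matching the intrinsic constants in the $L^2$-extension to the log discrepancy, is the technical crux.
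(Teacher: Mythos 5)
Your strategy for parts~(i) and~(iii) — Demailly approximation together with Kiselman numbers on the analytified modification — matches the paper's, although your claimed two-sided pointwise estimate $j\f - C_1 \le \log|\fb_j| \le j\f + C_2(j)$ needs care: the upper bound coming from the sub-mean-value inequality (cf.~\cite[Thm~4.2\,(2)]{DK}) controls $\f_p(z)$ by $\sup_{|z-w|<r}\f(w)$, not by $\f(z)$, so the right-hand inequality is not a pointwise comparison on the nose.

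The real issue is~(ii), and here there is a genuine gap — one you acknowledge. First, the paper does \emph{not} establish the controlled-growth bound by pulling back a pointwise Ohsawa--Takegoshi estimate. It instead proves that for any $f\in\cH_p$ one has $v(f)+A(v)\ge p\,\tau(\f\circ\pian)$, by bounding $\int_B|f|^2e^{-2p\f}$ from below on a sequence of disjoint polyannuli $\Omega_k\subset\Xan$ shrinking toward the generic point of $Z$: on $\Omega_k$ one has $\log|f\circ\pian|\ge -kv(f)+O(1)$ (from the power-series expansion with $a_{\bar\beta}(0)\ne 0$), $\f\circ\pian\le -k\tau(\f\circ\pian)+O(1)$ (from~\eqref{e102}), $\log|J_{\pian}|\sim -k(A(v)-\sum\a_i)$, and $\log\Vol(\Omega_k)\sim -2k\sum\a_i$, and the geometric series $\sum_k\exp(-2k(v(f)-p\tau+A(v)))$ must converge. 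This is a global integral argument, not a local pointwise bound. Second, the pointwise estimate you propose, $j(\f\circ\pian)\le\log|\fb_j\cdot\cO_{\Xan}|-\log|J_{\pian}|+O(1)$, would give the \emph{wrong} direction even if it were true: $\tau$ is order-reversing, and $\tau$ of $-\log|J_{\pian}|=-\sum_i(A_i-1)\log|u_i|+O(1)$ is $-\sum_i\a_i\ord_{D_i}(K)$ (negative, not positive — your claimed conversion to $+\sum_i\a_i\ord_{D_i}(K)$ has the sign flipped). Applying $\tau$ to your inequality yields
\begin{equation*}
jv(\f)\ge v(\fb_j\cdot\cO_{x,V})-A(v)+\sum_i\a_i,
\end{equation*}
which is a lower bound on $jv(\f)-v(\fb_j\cdot\cO_{x,V})$ and is in fact already implied by the trivial inequality $jv(\f)=jv(\fb_\bullet)\ge v(\fb_j)$; it does not give the needed upper bound $jv(\f)-v(\fb_j\cdot\cO_{x,V})\le A(v)$. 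So the ``technical crux'' you flag is indeed unresolved, and the missing ingredient is the integral/volume estimate over the $\Omega_k$, not a pullback of the extension theorem.
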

\begin{Remark}\label{R101}
  In~(iii) we compute $v(\f)$ as a Kiselman number of the pullback of $\f$ 
  under a suitable proper modification, the latter being 
  the analytification of a blowup of $\Spec\cO_{x,V}$, see~\S\ref{S114}. 
  Since the quantity $v(\fb_\bullet\cdot\cO_{x,V})$ does not depend on 
  any choices made, we see that $v(\f)$ is uniquely defined.
  Thus we obtain a proof of Proposition~\ref{P103}.
\end{Remark}

The proof of Proposition~\ref{P102} 
relies on a fundamental approximation procedure due to
Demailly~\cite{Dem92,Dem93}. 
We refer to~\cite[\S4]{DK} for details on what follows.

Let $\f$ be a psh function defined in some pseudoconvex domain 
$B\subseteq U$ containing $x$. For $p\ge1$ consider the Hilbert space
\begin{equation*}
  \cH_p:=\{f\in\cO(B)\mid\int_B|f|^2e^{-2p\f}<\infty\},
\end{equation*}
with the natural inner product.
It is a fact that for every $y\in B$, the elements of $\cH_p$ generate the 
stalk at $y$ of the multiplier ideal sheaf $\fb_p:=\cJ(p\f)$.
Define
\begin{equation*}
  \f_p:=\frac1p\sup\{\log|f|\mid \int_B|f|^2e^{-2p\f}\le 1\}.
\end{equation*}
Then $\f_p$ is psh on $B$. 
It follows from the Ohsawa-Takegoshi Theorem that
\begin{equation}\label{e107}
  \f\le\f_p+\frac{C}{p}
\end{equation}
on $B$, for some constant $C$ not depending on $\f$ or $p$.
For any $y\in B$ and any nonzero ideal $\fq\subseteq\cO_y$ we also have
\begin{multline}\label{e108}
  (p\lct^\fq_y(\fb_p))^{-1}
  =c_y^\fq(\f_p)^{-1}
  \le c_y^\fq(\f)^{-1}\\
  \le c_y^\fq(\f_p)^{-1}+\frac1p
  =(p\lct^\fq_y(\fb_p))^{-1}+\frac1p.
\end{multline}
Here the two equalities follow from~\eqref{e113} whereas
the first inequality results from~\eqref{e107}.
The second inequality is proved in~\cite[Thm~4.2~(3)]{DK}
in the case $\fq=\cO_U$ and the same proof works in the general case.

\begin{proof}[Proof of Proposition~\ref{P102}]
  Clearly~(i) follows from~\eqref{e108} with $y=x$ by letting $p\to\infty$.
  It remains to prove~(ii) and~(iii).

  We use the notation of~\S\ref{S114}.
  Write $\tau=\tau_{Z,D,\a}$. 
  It follows from~\eqref{e107} and from Proposition~\ref{P103} that
  \begin{equation}
    \tau(\f\circ\pian)
    \ge\tau(\f_p\circ\pian)
    =\frac1pv(\fb_p\cdot\cO_{x,V})
  \end{equation}
  for any $p\ge1$.

  We will show that if $f\in\cH_p$, then 
  \begin{equation}\label{e111}
    v(f)+A(v)\ge p\tau(\f\circ\pian).
  \end{equation}    
  Grant~\eqref{e111} for the moment. We then have
  \begin{equation}\label{e112}
    \frac1p v(\fb_p\cdot\cO_{x,V})
    \le\tau(\f\circ\pian)
    \le\frac1pv(\fb_p\cdot\cO_{x,V})+\frac1pA(v).
  \end{equation}
  Letting $p$ tend to infinity we see that 
  $\tau(\f\circ\pian)=v(\fb_\bullet\cdot\cO_{x,V})$, proving~(iii).
  In particular, $v(\f)=\tau(\f\circ\pian)$ is well defined independently
  of any choices made so we have established Proposition~\ref{P103}.
  Since $v$ was an arbitrary quasimonomial valuation on $\cO_{x,V}$ we also
  see that $\fb_\bullet\cdot\cO_{x,V}$ has controlled growth, proving~(ii).

  It only remains to prove~\eqref{e111}. 
  Since both
  sides of~\eqref{e111} depend continuously on the weight $\a$,
  we may assume that $\a_1,\dots,\a_m$ are rationally independent.
  We now argue as in the proof of Lemma~\ref{L104}, recycling the
  notation from that proof. 
  Thus we have the expansion~\eqref{e114} and we have 
  $a_{\bbar}(0)\ne0$ for the unique $\bbar\in\Z^m_{\ge0}$ for which 
  $\bbar\cdot\a=v(f)$.
  
  Now fix $K\gg1$. Define a sequence of disjoint open subsets 
  $(\Omega_k)_{k\ge0}$ of $\Omega$ by 
  \begin{equation}\label{e118}
    \Omega_k
    :=\bigcap_{i=1}^m\{-(K+k)<\frac{\log|u_i|}{\a_i}<1-(K+k)\}
    \cap\bigcap_{i=m+1}^n\{-K<\log|u_i|<1-K\}.
  \end{equation}
  For large $k$ we then have the following estimates on 
  $\Omega_k$:
  \begin{equation*}
    \log|f\circ\pian|\ge-kv(f)+O(1)
    \quad\text{and}\quad
    \f\circ\pian\le-k\tau(\f\circ\pian)+O(1).
  \end{equation*}
  Here the second estimate follows from~\eqref{e102}.
  Let $\eta$ be a nonvanishing holomorphic volume form near $x$ and write 
  $(\pian)^*\eta=J\pian\cdot\eta_u$ near $z$, where $\eta_u:=du_1\wedge\dots\wedge du_n$.
  We then have
  \begin{equation}\label{e122}
    \log|J\pian|=\sum_{i=1}^m(A_i-1)\log|u_i|+O(1)
  \end{equation}
  near $z$, where $A_i\in\Z_{>0}$. Further, we have 
  \begin{equation}
    A(v)=\sum_{i=1}^m\a_iA_i.
  \end{equation}
  As $k\to\infty$, we then have 
  \begin{equation}\label{e115}
    \log|J\pian|\sim-k(A(v)-\sum_{i=1}^m\a_i)+O(1)
  \end{equation}
  on $\Omega_k$.
  Moreover, the volume of $\Omega_k$ can be estimated by 
  \begin{equation}\label{e116}
    \log\int_{\Omega_k}(\sqrt{-1})^n\eta_u\wedge\overline{\eta_u}
    =-2k\sum_{i=1}^m\a_i+O(1)
  \end{equation}
  as $k\to\infty$.  
  
 Note that if $K$ is large enough, then 
 $\pian$ is biholomorphic on $\Omega_k$
 and $\pian(\Omega_k)$ is contained in $B$
 for all $k\ge0$.
 Thus we get
 \begin{multline*}
   +\infty
   >\int_B|f|^2e^{-2p\f}(\sqrt{-1})^n\eta\wedge\overline{\eta}
   \ge\sum_{k=0}^\infty\int_{\pian(\Omega_k)}|f|^2e^{-2p\f}(\sqrt{-1})^n\eta\wedge\overline{\eta}\\
   =\sum_{k=0}^\infty
   \int_{\Omega_k}|f\circ\pian|^2e^{-2p\f\circ\pian}|J\pian|^2(\sqrt{-1})^n\eta_u\wedge\overline{\eta_u}\\
   \gtrsim\sum_{k=0}^\infty
   \exp\left(
     -2kv(f)+2kp\tau(\f\circ\pian)-k\left(A(v)-\sum_{i=1}^m\a_i\right)
   \right)
   \int_{\Omega_k}(\sqrt{-1})^n\eta_u\wedge\overline{\eta_u}\\
   \gtrsim\sum_{k=0}^\infty
   \exp\left(-2k(v(f)-p\tau(\f\circ\pian)+A(v))\right),
 \end{multline*}    
 which yields~\eqref{e111} (with strict inequality).
\end{proof}
%
%
%
%
%
%
\section{Proof of the main results}\label{S103}
We are now ready to prove Theorem~D from the introduction and 
its variant Theorem~D' from~\S\ref{S115}.
Consider a germ of a psh function $\f$ at a point $x$ in a 
complex manifold of dimension $n$ and let $\fq\subseteq\cO_x$ be a
nonzero ideal such that $c_x^{\fq}(\f)<\infty$.
Let $U$ be a small open neighborhood of $x$ such that 
$\f$ and $\fq$ are defined on an open neighborhood of $\overline{U}$.
Also fix a nonvanishing holomorphic volume form $\eta$ in a neighborhood of 
$\overline{U}$ and compute all volumes with respect to the positive 
measure $(\sqrt{-1})^n\eta\wedge\overline\eta$.
%
%
%
%
\subsection{Analytic reduction}
As in~\S\ref{S106} set
\begin{equation*}
  V_\mu:=\{y\in U\mid c^\fq_y(\f)\le\mu\}
\end{equation*}
for $\mu\ge\la:=c_x^\fq(\f)$ and
\begin{equation*}
  V:=V_\la=\{y\in U\mid c^\fq_y(\f)\le\la\}.
\end{equation*}
Note that $x\in V$.
By the lower semicontinuity of $y\mapsto c^\fq_y$ (see Lemma~\ref{L106}),
$V_\mu$ is a proper analytic subset of $U$ for any $\mu\ge\la$
and $V$ is the decreasing intersection of $V_\mu$ for all $\mu>\la$.
Using the fact that $\f$ and $\fq$ are defined in a neighborhood of
$\overline{U}$ we deduce the existence of $\mu>\la$ such that $V=V_\mu$.
\begin{Lemma}\label{L201}
  In order to prove Theorem~D', it suffices to assume that $V$ is
  smooth at $x$ and that  $\f\ge p\log|I_V|+O(1)$ near $x$ for some integer 
  $p\ge0$.
\end{Lemma}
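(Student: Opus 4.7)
The plan is to perform two independent reductions: first, make $V$ smooth at $x$ by passing to an embedded resolution of singularities; second, enforce $\f\ge p\log|I_V|+O(1)$ via a max-with-log-pole trick. Both modifications preserve $c_x^\fq$ while only \emph{decreasing} the sublevel-set volumes, so a lower bound for the modified data will imply the one asserted by Theorem~D'.

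\textbf{Smoothness of $V$.} If $V$ is singular at $x$, apply Hironaka's embedded resolution in the analytic category to the pair $(U,V)$: there is a proper bimeromorphic morphism $\pi\colon\tilde U\to U$, biholomorphic over $U\setminus V$, such that $E:=\pi^{-1}(V)_{\mathrm{red}}$ is a simple normal crossing divisor. Set $\tilde\f:=\f\circ\pi$ and, for each $\tilde x\in\pi^{-1}(x)$, let $\tilde\fq_{\tilde x}\subseteq\cO_{\tilde x}$ be the ideal generated by $\pi^*\fq\cdot J_\pi$, so $|\tilde\fq_{\tilde x}|^2\asymp|\pi^*\fq|^2|J_\pi|^2$ up to units. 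The change-of-variables identity
\begin{equation*}
\int_U|\fq|^2e^{-2c\f}\,d\Vol_U=\int_{\tilde U}|\tilde\fq_{\tilde x}|^2e^{-2c\tilde\f}\,d\Vol_{\tilde U}
\end{equation*}
gives $c_x^\fq(\f)=\min_{\tilde x\in\pi^{-1}(x)}c_{\tilde x}^{\tilde\fq_{\tilde x}}(\tilde\f)=\la$. The upstairs log canonical locus $\tilde V_{\tilde x}:=\{\tilde y:c_{\tilde y}^{\tilde\fq_{\tilde x}}(\tilde\f)\le\la\}$ sits inside $E$; by further refining $\pi$ (e.g., principalizing the asymptotic multiplier ideals $\cJ(j\tilde\f)$ for $j\gg 1$, together with blow-ups to separate coincident strata), we may assume $\tilde V_{\tilde x}$ is a smooth irreducible component of $E$ in a neighborhood of some minimizer $\tilde x$.

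\textbf{Lower bound.} Let $\tilde\nu:=\sup\{\nu(\tilde\f,\tilde y):\tilde y\in\tilde V_{\tilde x}\text{ near }\tilde x\}$, which is finite by Siu's upper semicontinuity of Lelong numbers. Choose an integer $p\ge\tilde\nu$ and set
\begin{equation*}
\tilde\f':=\max\bigl\{\tilde\f,\,p\log|I_{\tilde V_{\tilde x}}|-M\bigr\},\qquad M\gg 0.
\end{equation*}
Then $\tilde\f'$ is psh, $\tilde\f'\ge p\log|I_{\tilde V_{\tilde x}}|+O(1)$, and the choice $p\ge\tilde\nu$ guarantees $\tilde\f'\equiv\tilde\f$ in a neighborhood of $\tilde V_{\tilde x}$, so $c_{\tilde x}^{\tilde\fq_{\tilde x}}(\tilde\f')=\la$. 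Applying the reduced form of Theorem~D' to $(\tilde\f',\tilde\fq_{\tilde x})$ at $\tilde x$ now yields
\begin{equation*}
\Vol_{\tilde U}\bigl\{\tilde y:\la\tilde\f'(\tilde y)-\log|\tilde\fq_{\tilde x}|<\log s\bigr\}\gtrsim s^2,
\end{equation*}
and monotonicity $\tilde\f\le\tilde\f'$ upgrades this to the same bound with $\tilde\f$ in place of $\tilde\f'$.

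\textbf{Transfer and main obstacle.} The identities $\Vol_U(A)=\int_{\pi^{-1}(A)}|J_\pi|^2\,d\Vol_{\tilde U}$ and $\log|\tilde\fq_{\tilde x}|=\log|\pi^*\fq|+\log|J_\pi|+O(1)$ are intended to convert the upstairs estimate into $\Vol_U\{\la\f-\log|\fq|<\log r\}\gtrsim r^2$. The hard point is that the upstairs sublevel set of $\la\tilde\f-\log|\tilde\fq_{\tilde x}|$ concentrates along $E$, where $|J_\pi|^2$ vanishes, so a naive set-theoretic inclusion integrated against the Jacobian weight loses the required $r^2$ factor. The remedy exploits the monomial form of $J_\pi$ and $\pi^*\fq$ in SNC coordinates around $\tilde x$: one performs a dyadic decomposition of a neighborhood of $\tilde x$ according to the vanishing orders of $J_\pi$, applies the upstairs $s^2$ estimate with parameter rescaled as $s\mapsto s/|J_\pi|$ on each piece, and sums the contributions to recover the downstairs $r^2$ lower bound, completing the reduction.
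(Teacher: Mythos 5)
Your proposal diverges substantially from the paper's proof and contains two genuine gaps, one of which is fatal.

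First, your argument for ensuring $c_{\tilde x}^{\tilde\fq_{\tilde x}}(\tilde\f')=\la$ is incorrect. You claim that choosing $p\ge\tilde\nu$, the supremum of Lelong numbers of $\tilde\f$ along $\tilde V_{\tilde x}$, guarantees $\tilde\f'\equiv\tilde\f$ in a neighborhood of $\tilde V_{\tilde x}$. This would require $\tilde\f\ge p\log|I_{\tilde V_{\tilde x}}|-M$ near that submanifold, but Lelong numbers control psh functions only from above: $\nu(\tilde\f,y)\le p$ gives $\tilde\f(z)\le p\log\dist(z,y)+O(1)$ near $y$, not the lower bound you need. A psh function can drop to $-\infty$ on a thin set far faster than any Lelong number detects (e.g.\ $\max\{\log|z_1|,\,2\log|z_2|\}$ has Lelong number $1$ at the origin but equals $2\log|z_2|$ on the axis $z_1=0$). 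So the claimed identity $\tilde\f'\equiv\tilde\f$ near $\tilde V_{\tilde x}$ fails, and with it the equality of jumping numbers. The paper handles this point by a quite different mechanism: it passes to a threshold $\mu>\la$ with $V_\mu=V$, uses the Nullstellensatz on the colon ideal $\fa_\mu=(\cJ(\mu\f):\fq)$ to get $I_V^N\subseteq\fa_\mu$, chooses $p>N/(\mu-\la)$, and then compares sublevel-set integrals: the set $U_r$ where $\la'\f-\log|\fq|<\log r$ is contained in $\tU_r\cup U'_r$, where $\int_0^\infty\Vol(U'_r)\,dr/r^3<\infty$ follows from $I_V^N\subseteq\fa_\mu$, and $\int_0^\infty\Vol(U_r)\,dr/r^3=\infty$ from $\la'>\la$; this forces the $\tU_r$ contribution to diverge, giving $c_x^\fq(\tilde\f)\le\la'$ for all $\la'>\la$.

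Second, your reduction to smooth $V$ via Hironaka resolution creates a transfer problem that you acknowledge but do not solve. You observe that the upstairs sublevel sets concentrate along the exceptional divisor where $|J_\pi|^2$ vanishes, so a naive change of variables degrades the $r^2$ bound, and you propose a dyadic decomposition with a rescaling $s\mapsto s/|J_\pi|$ applied ``on each piece.'' But the upstairs estimate is a single asymptotic at $\tilde x$, and it is not legitimate to apply it at varying scales over a family of pieces as if it were a uniform local estimate. This step is the crux and is left as a sketch. The paper avoids this entirely by a much more elementary observation: replace $x$ by a Zariski general point of $V$. At such a point $V$ is automatically smooth, and since every neighborhood of $x$ contains a neighborhood of some point in the dense set where the estimate is established, the estimate at $x$ follows immediately with no blow-up and no Jacobian bookkeeping. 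This ``localize at a generic point'' move is central to the paper's strategy and is exactly what the surrounding sections (the passage to $\cO_{x,V}$) are built around; importing a resolution of singularities here is both unnecessary and, as you found, hard to make rigorous.
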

\begin{proof}
  We can replace $x$ by a Zariski general point in $V$.
  Indeed, we have $c_y^\fq(\f)=\la$ for a
  Zariski general point $y\in V$, and if the estimate 
  \begin{equation}
    \Vol\{y'\in U_y\mid \la\f(y')-\log|\fq|<\log r\}\gtrsim r^2
  \end{equation} 
  holds for every neighborhood $U_y$ of any point $y$ in
  a dense subset of $V$, then it also holds for every neighborhood
  of $x$.
  In particular, we may assume that $V$ is smooth at $x$. 

  Pick generators of $I_V\cdot\cO_x$. After shrinking $U$, 
  we may assume these generators are defined on $U$
  and that the associated psh function $\log|I_V|$, defined 
  as in~\eqref{e101}, is negative on $U$.
  For an integer $p>0$ define 
  \begin{equation*}
    \tilde\f:=\max\{\f,p\log|I_V|\}.
  \end{equation*}
  We claim that $c_x^\fq(\tilde{\f})=c_x^\fq(\f)$ for $p\gg0$.
  This will allow us to replace $\f$ by $\tilde\f$ and complete the
  proof.  Indeed, we have $\f\le\tilde\f$ so if the estimate~\eqref{e117}
  holds with $\f$ replaced by $\tilde\f$, then it must also hold for $\f$.

  To prove the claim, pick $\mu>\la$ such that $V_{\mu}=V$.
  Consider the colon ideal $a_{\mu}=(\cJ(\mu\f):\fq)$ on $U$.
  This is a coherent ideal sheaf on $U$ whose 
  stalk at $y\in U$ is given by
  \begin{equation*}
    \fa_\mu\cdot\cO_y
    :=\{h\in\cO_y\mid
    |h|^2|\fq|^2e^{-2\mu\f}\
    \text{is locally integrable at $y$}\}.
  \end{equation*}
  The fact that $V_\mu=V$ implies that
  the zero locus of $\fa_\mu$ is equal to $V$. 
  Hence the Nullstellensatz implies that there exists
  $N\ge1$ such that $I_V^N\subseteq\fa_\mu$.
  Now pick the integer $p>0$ large enough so that 
  \begin{equation*}
    p>\frac{N}{\mu-\la}.
  \end{equation*}

  Pick any $\la'\in(\la,\mu)$ such that $p>N/(\mu-\la')$. 
  For $0<r\ll1$ define
  Borel subsets $U_r$, $\tU_r$ and $U'_r$ of $U$ by 
  \begin{align*}
    U_r&:=\{\la'\f-\log|\fq|<\log r\}\\
    \tU_r&:=\{\la'\tilde{\f}-\log|\fq|<\log r\}\\
    U'_r&:=\{\mu\f-\log|\fq|-N\log|I_V|<\log r\}.
  \end{align*}
  It follows from the choice of $p$ that $U_r\subseteq \tU_r\cup U'_r$.
  The inclusion $I_V^N\subseteq\fa_\mu$ 
  guarantees that, after possibly shrinking $U$, we have
  \begin{equation}
    \int_0^\infty\Vol(U'_r)\frac{dr}{r^3}<\infty.\label{e201}
  \end{equation}
  Indeed, if we set $F:=\exp(N\log|I_V|+\log|\fq|-\mu\f)$,
  then, after shrinking $U$, 
  \begin{equation*}
    \infty
    >\int_U F^2
    =2\int_0^\infty \Vol(U\cap\{F>t\})t\,dt
    =2\int_0^\infty \Vol(U'_r)\frac{dr}{r^3},
  \end{equation*}
  where the last equality follows from setting $t=1/r$.

  On the other hand, the fact that $\la'>\la=c_x^\fq(\f)$ implies
  that   
  \begin{equation*}
    \int_0^\infty\Vol(U_r)\frac{dr}{r^3}=\infty.
  \end{equation*}
  The inclusion $U_r\subseteq \tU_r\cup U'_r$ then gives
  \begin{equation*}
    \int_0^\infty\Vol(\tU_r)\frac{dr}{r^3}=\infty,
  \end{equation*}
  so that $c_x^\fq(\tilde\f)\le\la'$. 
  Letting $\la'\to\la$ we get  $c_x^\fq(\tilde\f)\le\la$. 
  But $\tilde\f\ge\f$, so we must have
  $c_x^\fq(\tilde\f)\ge c_x^\fq(\f)=\la$ and hence 
  $c_x^\fq(\tilde\f)=c_x^\fq(\f)$, establishing the claim
  and completing the proof of the lemma.
\end{proof}
\begin{Remark}
  The proof of Lemma~\ref{L201} can be viewed as
  an analytic analogue of the arguments in~\cite[\S7.4]{graded}.
\end{Remark}
%
%
%
%
\subsection{End of proof}\label{S117}
Let $x$ and $V$ be as above. In particular, $V$ is smooth at $x$.
Let $\cO_{x,V}$ be the localization of $\cO_x$
at the ideal $I_V\cdot\cO_x$. Then $\cO_{x,V}$ is a regular local
ring with maximal ideal $\fm_{x,V}=I_V\cdot\cO_{x,V}$.
Its dimension is equal to the codimension of $V$ and hence
bounded by $n$. It is also an excellent ring.
Indeed, $\cO_x$ is isomorphic to the ring of convergent power series 
in $n$ variables, hence excellent,
see~\cite[Theorem~102]{Matsumura},
and excellence is preserved by localization.

Set $\fb_j=\cJ(j\f)$ for $j\ge 0$. 
Then $\fb_\bullet\cdot\cO_{x,V}$ is a subadditive system of ideals having 
controlled growth, see Proposition~\ref{P102}. 
By Lemma~\ref{L201} we may assume $\f\ge p\log|I_V|+O(1)$;
hence Lemma~\ref{L105} implies
$\fb_j\cdot\cO_{x,V}\supseteq\fm_{x,V}^{pj}$ for all $j\ge 1$.

From the definition of $V=V_\la$ and from Proposition~\ref{P102}
we see that 
\begin{equation*}
  \lct^\fq_y(\fb_\bullet)
  =c_y^\fq(\f)
  =\la
\end{equation*}
for every $y\in V$. 
Lemma~\ref{L101} then shows that 
\begin{equation*}
  \lct^{\fq\cdot\cO_{x,V}}(\fb_\bullet\cdot\cO_{x,V})=\la.
\end{equation*}
Recall that we assume that Conjecture~C' holds in rings of dimension
at most $n$.
Proposition~\ref{P101} implies that Conjecture~E' also holds
in rings of dimension at most $n$. We can thus find
a quasimonomial valuation
$v$ on $\cO_{x,V}$ such that
\begin{equation}\label{e119}
  \frac{A(v)+v(\fq\cdot\cO_{x,V})}{v(\fb_\bullet\cdot\cO_{x,V})}=\la.
\end{equation}

Consider a projective birational morphism 
$\pi\colon X\to\Spec\cO_{x,V}$ such that $\pi$ 
defines a log resolution of $\fq$
and such that $X$ is adapted to $v$.
Thus $v$ is given by data $Z,D,\a$ as in~\S\ref{S110}.

We analytify $\pi$ following~\S\ref{S108}  and~\S\ref{S114}. 
Let $\tau$ denote the Kiselman 
number with respect to the data $\Zan,\Dan,\a$, see~\S\ref{S107}.
We know from Proposition~\ref{P102}~(iii) and Remark~\ref{R101} that 
\begin{equation}\label{e104}
  \tau(\f\circ\pian)
  =v(\fb_\bullet\cdot\cO_{x,V}).
\end{equation}
Thus~\eqref{e119} yields
\begin{equation}\label{e120}
  \la\tau(\f\circ\pian)=A(v)+v(\fq).
\end{equation}

We use the notation from~\S\ref{S114}.
Pick a Zariski general point $z\in\Zan$.
Then 
\begin{equation*}
  \log|\fq|\circ\pian=\sum_{i=1}^m c_i\log|u_i|+O(1)
\end{equation*}
near $z$, where $c_i=\ord_{D_i}(\fq)\ge0$; see the end of~\S\ref{S108}.
We also have
\begin{equation*}
  \log|J\pian|=\sum_{i=1}^m(A_i-1)\log|u_i|+O(1),
\end{equation*}
where $A_i\in\Z_{>0}$, see~\eqref{e122}.
Finally, recall from~\eqref{e102} that
\begin{equation*}
  \f\circ\pian\le\tau(\f\circ\pian)\max_{1\le i\le m}\frac{1}{\a_i}\log|u_i|+O(1).
\end{equation*}

Fix $K\gg1$ and define disjoint open subsets $\Omega_k$,
$k\ge 0$, of $\Omega$ as in~\eqref{e118}.
As $k\to\infty$, we then have the following estimates on $\Omega_k$:
\begin{equation*}
  \f\circ\pian\le-k\tau(\f\circ\pian)+O(1),
\end{equation*}
\begin{equation*}
  \log|\fq|\circ\pian
  \ge-k\sum_{i=1}^mc_i\a_i+O(1)
  =-kv(\fq)+O(1).
\end{equation*}
Using~\eqref{e120} these estimates imply that 
\begin{equation}\label{e121}
  \la\f-\log|\fq|
  \le-k(\la\tau(\f\circ\pian)-v(\fq))+O(1)
  =-kA(v)+O(1)
\end{equation}
on the open set $\pian(\Omega_k)\subset U$.
For $1\le i\le m$ set
\begin{equation*}
  \Omega_{k,i}:=\{-(K+k)<\frac{\log|u_i|}{\a_i}<1-(K+k)\}.
\end{equation*}
Then we can estimate the volume of $\pian(\Omega_k)$ as follows:
\begin{multline*}
  \Vol\pian(\Omega_k)
  =(\sqrt{-1})^n\int_{\pian(\Omega_k)}\eta\wedge\overline\eta
  =(\sqrt{-1})^n\int_{\Omega_k}|J\pian|^2\eta_u\wedge\overline\eta_u\\
  \gtrsim\prod_{i=1}^m\sqrt{-1}\int_{\Omega_{k,i}}|u_i|^{2A_i-2}du_i\wedge d\bar{u}_i
    \gtrsim\prod_{i=1}^m\exp(-k(2A_i\a_i))
    =\exp(-2kA(v)).
  \end{multline*}
This estimate together with~\eqref{e121} concludes the proof of Theorem~D'. 
By choosing $\fq=\cO_U$ throughout all the arguments (see also 
Remark~\ref{special_equivalence}), we also obtain a proof of 
Theorem~D.
%
%
%
%
%
%
%


\end{document}